\newtheorem{theorem}{Theorem}
\newtheorem{lemma}{Lemma}
\newtheorem{proposition}{Proposition}
\newtheorem{definition}{Definition}
\newtheorem{remark}{Remark}
\newcommand{\suchthat}{\ifnum\currentgrouptype=16 \;\middle|\;\else\mid\fi} 
\def\vector#1{\mbox{\boldmath $#1$}}
\def\clap#1{\hbox to 0pt{\hss#1\hss}}
\def\dimInd #1{\expandafter\indm@i#1,,,,,\@nil}
\def\indm@i #1,#2,#3,#4,#5,#6,#7\@nil{%
    \ifx$#1$ \edef\paramA{0}\else \edef\paramA{#1} \fi
    \ifx$#2$ \edef\paramB{0}\else \edef\paramB{#2} \fi
    \ifx$#3$ \edef\paramC{0}\else \edef\paramC{#3} \fi
    \ifx$#4$ \edef\paramD{0}\else \edef\paramD{#4} \fi
    \ifx$#5$ \edef\paramE{0}\else \edef\paramE{#5} \fi
    \ifx$#6$ \edef\paramF{0}\else \edef\paramF{#6} \fi
    I({\tiny
        {\arraycolsep = 0.1mm
            \begin{array}{ccc}
                \paramD & \paramE & \paramF\\
                \paramA & \paramB & \paramC
            \end{array}}})
}
\def\dimVec #1{\expandafter\dimVec@i#1,,,,,\@nil}
\def\dimVec@i #1,#2,#3,#4,#5,#6,#7\@nil{%
    \ifx$#1$ \edef\paramA{0}\else \edef\paramA{#1} \fi
    \ifx$#2$ \edef\paramB{0}\else \edef\paramB{#2} \fi
    \ifx$#3$ \edef\paramC{0}\else \edef\paramC{#3} \fi
    \ifx$#4$ \edef\paramD{0}\else \edef\paramD{#4} \fi
    \ifx$#5$ \edef\paramE{0}\else \edef\paramE{#5} \fi
    \ifx$#6$ \edef\paramF{0}\else \edef\paramF{#6} \fi
    {\tiny\arraycolsep = 0.1mm
        \begin{array}{ccc}
            \paramD & \paramE & \paramF\\
            \paramA & \paramB & \paramC
        \end{array}}
}
\def\smdimVec #1{\expandafter\smdimVec@i#1,,,,,\@nil}
\def\smdimVec@i #1,#2,#3,#4,#5,#6,#7\@nil{%
    \ifx$#1$ \edef\paramA{0}\else \edef\paramA{#1} \fi
    \ifx$#2$ \edef\paramB{0}\else \edef\paramB{#2} \fi
    \ifx$#3$ \edef\paramC{0}\else \edef\paramC{#3} \fi
    \ifx$#4$ \edef\paramD{0}\else \edef\paramD{#4} \fi
    \ifx$#5$ \edef\paramE{0}\else \edef\paramE{#5} \fi
    \ifx$#6$ \edef\paramF{0}\else \edef\paramF{#6} \fi
    {\small\arraycolsep = 0.1mm
        \begin{array}{ccc}
            \paramD & \paramE & \paramF\\
            \paramA & \paramB & \paramC
        \end{array}}
}
\def\spaceInd #1{\expandafter\spcindm@i#1,,,,,\@nil}
\def\spcindm@i #1,#2,#3,#4,#5,#6,#7\@nil{%
    \ifx$#1$ \edef\paramA{0}\else \edef\paramA{#1} \fi
    \ifx$#2$ \edef\paramB{0}\else \edef\paramB{#2} \fi
    \ifx$#3$ \edef\paramC{0}\else \edef\paramC{#3} \fi
    \ifx$#4$ \edef\paramD{0}\else \edef\paramD{#4} \fi
    \ifx$#5$ \edef\paramE{0}\else \edef\paramE{#5} \fi
    \ifx$#6$ \edef\paramF{0}\else \edef\paramF{#6} \fi
    \begin{tikzcd}[ampersand replacement=\&]
        \paramD \rar \& \paramE \& \paramF \lar \\
        \paramA \rar\uar \& \paramB \uar \& \paramC \lar\uar
    \end{tikzcd}
}
\def\drawmytikzmatrix{
    \pgfmathtruncatemacro{\sz}{10};
    \pgfmathtruncatemacro{\szpred}{\sz-1};
    \pgfmathtruncatemacro{\baruno}{\sz/2+1};
    
    \filldraw[fill=black!20] (\sz,0)
    \foreach \dy in {1,1,2,1,2,2,1}{
        -- ++(-1,0) -- ++(0,\dy)
    }
    --(\sz,\sz) -- cycle;

    \draw (0,0) rectangle (\sz,\sz);
    \draw (\baruno,0) -- (\baruno,\sz);

    \draw (0,5) -- (\sz,5);

    \node at (3,2.5) {\large$\mathbf{0}$};

}
\newcommand{\A}{\mathbb{A}}
\newcommand{\Aright}{{\mathbb{A}}^{\shortrightarrow}}
\newcommand{\D}{\mathbb{D}}
\newcommand{\E}{\mathbb{E}}
\newcommand{\I}{\mathbb{I}}
\newcommand{\N}{\mathbb{N}}
\newcommand{\R}{\mathbb{R}}
\newcommand{\rank}{{\rm Rank~}}
\newcommand{\trace}{{\rm Tr}}
\newcommand{\kernel}{{\rm Ker~}}
\newcommand{\image}{{\rm Im~}}
\newcommand{\cokernel}{{\rm Coker~}}
\newcommand{\Hom}{{\rm Hom}}
\newcommand{\KQ}{K\!Q}
\newcommand{\CL}{C\!L}
\newcommand{\rad}{{\rm rad}}
\newcommand{\rep}{{\rm rep}}
\newcommand{\myarrowLR}[1]{\tikz{\draw[commutative diagrams/leftrightarrow](0,0)--+(#1,0);}}
\newcommand{\myarrowL}[1]{\tikz{\draw[commutative diagrams/leftarrow](0,0)--+(#1,0);}}
\newcommand{\myarrowR}[1]{\tikz{\draw[commutative diagrams/rightarrow](0,0)--+(#1,0);}}
\newcommand{\sio}{${\rm SiO}_2$ }
\numberwithin{equation}{section}
\begin{document}

\title{Persistence Modules on Commutative Ladders of Finite Type}

\author{
Emerson G. Escolar\thanks{Graduate School of Mathematics, Kyushu University.  
744, Motooka, Nishi-ku, Fukuoka, 819-0395, Japan (e-mail: \texttt{eescolar@math.kyushu-u.ac.jp}).}
\and
Yasuaki Hiraoka\thanks{WPI - Advanced Institute for Materials Research (WPI-AIMR), Tohoku University
2-1-1, Katahira, Aoba-ku, Sendai, 980-8577, Japan (e-mail: \texttt{hiraoka@wpi-aimr.tohoku.ac.jp}).}
}

%
%


\maketitle

\begin{abstract}
We study persistence modules defined on commutative ladders. This class of persistence modules frequently appears in topological data analysis, and the theory and algorithm proposed in this paper can be applied to these practical problems. A new algebraic framework deals with persistence modules as representations on associative algebras and the Auslander-Reiten theory is applied to develop the theoretical and algorithmic foundations. In particular, we prove that the commutative ladders of length less than 5 are representation-finite and explicitly show their Auslander-Reiten quivers. Furthermore, a generalization of persistence diagrams is introduced by using Auslander-Reiten quivers. We provide an algorithm for computing persistence diagrams for the commutative ladders of length $3$ by using the structure of Auslander-Reiten quivers.
\end{abstract}

\noindent\textbf{Keywords} Persistence Module, Commutative Ladder, Representation Theory, Auslander-Reiten Theory, Topological Data Analysis

%
%
%
%

%
%
%
%
%
%
%
%
%
\section{Introduction}\label{sec:introduction}
The theory of topological persistence \cite{elz,zc} has become popular in topological data analysis (TDA) \cite{carlsson}. Given an input data such as a point cloud or a sequence of topological spaces, persistence modules capture topological features of the input and the robustness of those features. One of the important algebraic frameworks that makes the theory of topological persistence more useful is to deal with persistence modules as representations on $\A_n$ quivers \cite{cd}. In this framework, the representation-finite property of $\A_n$ quivers allows us to measure topological features in a compact format called a persistence diagram and to realize efficient computations \cite{nanda,zc} for practical applications.

On the other hand, there are several important problems to be solved in TDA to which persistence modules on $\A_n$ quivers cannot be applied. For example, we often need to compare two input data and capture robust {\it and} common topological features between them. By restricting our attention to only robust {\it or} common topological features, persistence modules on $\A_n$ quivers can solve the restricted problems. However, we cannot directly apply this method for studying both properties simultaneously. 

In this paper, we extend the theory of topological persistence to persistence modules on commutative ladders. This extension provides us with a tool to solve the above problem, as an example. A new algebraic framework we propose in this paper deals with persistence modules as representations on quivers with relations 
and applies a powerful tool in representation theory called the Auslander-Reiten theory. By this tool, we can study the representation-finite property of commutative ladders and obtain a natural generalization of persistence diagrams. 


%
%
%
\subsection{Motivations}\label{section:motivation}
This work is motivated by recent results in the theory of topological persistence and applications in TDA. 
On the theoretical side, this work is strongly inspired by the paper \cite{cd}. 
In that paper, Carlsson and de Silva explain persistence modules as representations of $\A_n$ quivers and generalize to zigzag persistence modules. 
By this connection to the representation theory, algebraic aspects of persistence modules have been understood further and have led to several generalizations \cite{bubenik,wcb}. 
This paper deals with an extension to representations on associative algebras. 

On the side of applications, our motivation arises from TDA on materials science \cite{glass} and protein structural analysis \cite{protein}. In both studies, atomic arrangements in molecules such as glasses and proteins are the input data, and the authors study topological characterizations of some physical properties of those molecules by using standard persistence modules of the weighted alpha complex filtrations \cite{alpha} of the input. 

Let us explain the TDA studied in \cite{glass} in more detail to clarify the motivation of this work.
The authors study geometric structures of rings in glasses. The left part of Figure \ref{fig:pd_glass} shows the persistence diagram in dimension one of an \sio glass with an atomic arrangement $P_X$, where the multiplicities of the generators are measured in double logarithmic scale. One of the important results in \cite{glass} is that there are correlations between the generators in the blue square and the rigidity of glasses under small deformations of atomic arrangements. After seeing this physical correspondence, it is natural to ask how these generators actually persist under physical pressurization. The right part  of Figure \ref{fig:pd_glass} shows the persistence diagram in dimension one of an atomic arrangement $P_Y$ obtained by adding pressure to the original $P_X$. It should be noted that the similarity of these diagrams, which can be measured by using the bottleneck distance, is not sufficient to verify that these generators actually persist under pressurization. In order to precisely answer the above question, we have to directly study generators in the blue regions which are shared between the left and right.
\begin{figure}[h!]
\begin{center}
\includegraphics[width=7.5cm]{./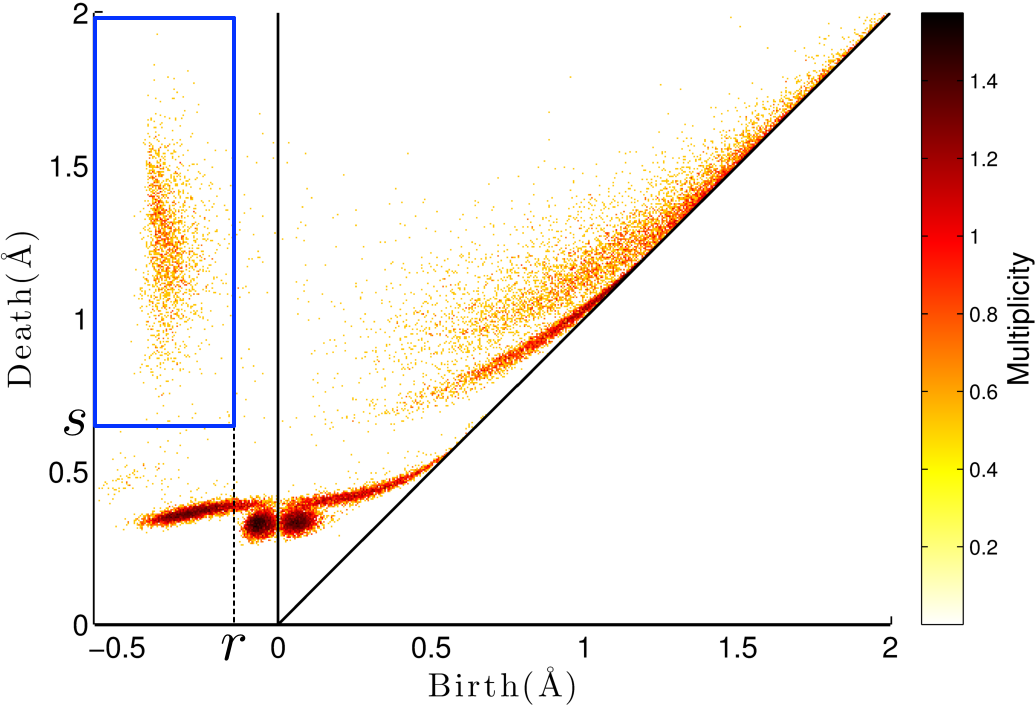}
\includegraphics[width=7.5cm]{./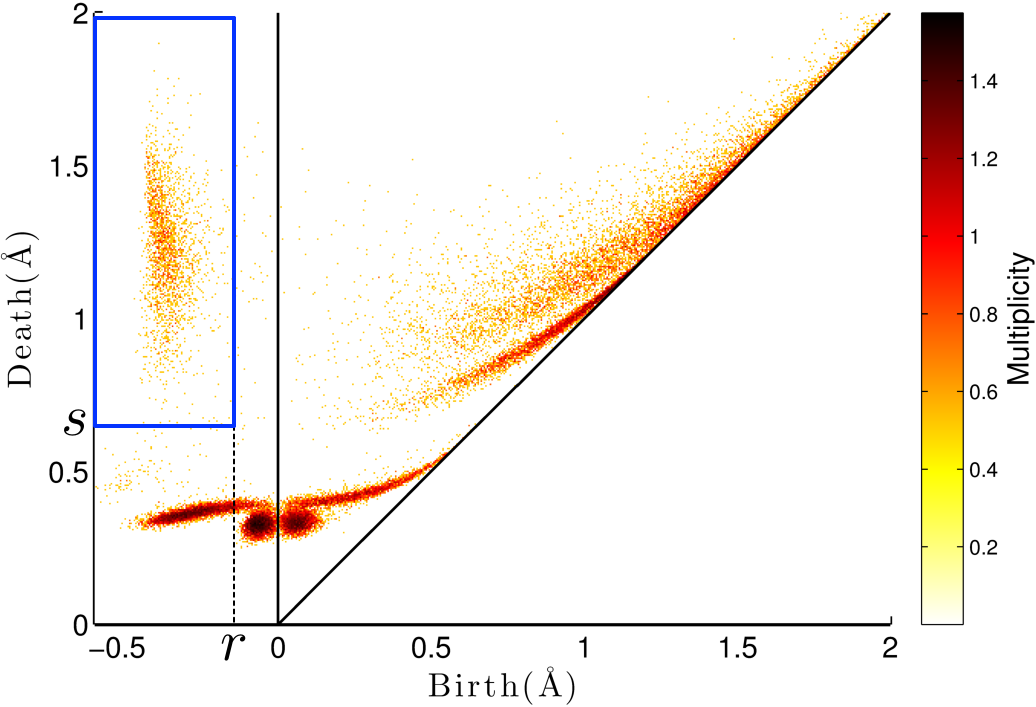}
\caption{Persistence diagrams in dimension one of the weighted alpha complex filtrations of the molecular structures of glasses. The point cloud for the left is given by an atomic arrangement $P_X$ of a glass, whereas the atomic arrangement $P_Y$ for the right is obtained by adding isotropic pressure to $P_X$. The multiplicities are measured in double logarithmic scale. The weighted alpha complexes are constructed by the software CGAL \cite{cgal}. For more details, see \cite{glass}.}
\label{fig:pd_glass}
\end{center}
\end{figure}

Let $X_\alpha$ and $Y_\alpha$ be weighted alpha complexes with parameter $\alpha$ of the atomic arrangements $P_X$ and $P_Y$, respectively, where the initial weight of each atom is given by its ionic radius. We note that the birth and death parameters of the generators in the blue region are less than $r$ and greater than $s$, respectively. In the 2-step persistence modules 
\begin{eqnarray*}
H_1(X_r)\rightarrow H_1(X_s)&\simeq& \I[r,s]^{m_{rs}}\oplus \I[r,r]^{m_r} \oplus \I[s,s]^{m_s},\\
H_1(Y_r)\rightarrow H_1(Y_s)&\simeq& \I[r,s]^{n_{rs}}\oplus \I[r,r]^{n_r} \oplus \I[s,s]^{n_s}
\end{eqnarray*}
with decompositions using the interval representations 
\[
	\I[r,s] = K\rightarrow K,~~~\I[r,r]=K\rightarrow 0,~~~\I[s,s]=0\rightarrow K,
\]
these generators are specified by the direct summands $\I[r,s]^{m_{rs}}$ and $\I[r,s]^{n_{rs}}$. Here, we denote the coefficient field of the homology vector spaces by $K$. Then, in order to study the relationship between $\I[r,s]^{m_{rs}}$ and $\I[r,s]^{n_{rs}}$, we can use the commutative diagram
\begin{equation}\label{eq:pm_glass}
\spaceInd{H_1(X_r), H_1(X_r \cup Y_r), H_1(Y_r),
    H_1(X_s), H_1(X_s \cup Y_s), H_1(Y_s)},
\end{equation}
where all the linear maps are induced by inclusions.
To be more precise, it suffices to study algebraic decompositions of this commutative diagram in a  way similar to standard persistence modules and to investigate the direct summand
\[
\spaceInd{K,K,K,K,K,K}.
\]

From these motivations, we treat quivers of the form 
\begin{equation}\label{eq:cl}
\includegraphics{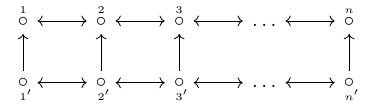}
\end{equation}
in this paper, where $\longleftrightarrow$ expresses either $\longrightarrow$ or $\longleftarrow$, and the orientations of the arrows in the upper sequence are the same as those in the lower sequence. 
We assume that any two directed paths with the same start and end vertices are identified, and call this type of quiver a \emph{commutative ladder} of length $n$. 
Then, we define persistence modules on commutative ladders by representations on them (precise definitions of these concepts are given in Section \ref{sec:preliminaries}). Our primary interest is to extend the theory of topological persistence to this class of quivers. The commutative diagram (\ref{eq:pm_glass})  is thus an example of a persistence module on a commutative ladder of length three.

\subsection{Approach}\label{section:approach}
Our approach treats persistence modules as representations on quivers with nontrivial relations and applies the Auslander-Reiten theory for studying this extension. Here we summarize the essence of this new framework. A detailed account is given in Section \ref{sec:preliminaries} and Section \ref{section:pm_clq}. 

In the paper \cite{cd}, the authors study zigzag persistence modules as representations on quivers, where Gabriel's theorem \cite{gabriel} plays a key role. This theorem provides a complete characterization of quivers that have only finitely many isomorphism classes of indecomposables. A quiver with this property is called representation-finite, and the class of $\A_n$ quivers 
\[
\includegraphics{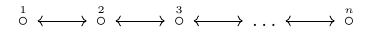}
\]
is shown to be one of the classes of quivers with this property. 
Then, zigzag persistence modules can be formulated as representations on $\A_n$ quivers, and all the isomorphism classes of indecomposables are given by interval representations, which we usually call barcodes \cite{elz,zc} in TDA.

On the other hand, the quivers (\ref{eq:cl}) of interest in this paper possess the commutativity relations. Note that Gabriel's theorem does not treat quivers with such nontrivial relations. Hence, it is still possible for our quivers to be representation-finite even though they are not one of the classes in Gabriel's theorem, and we may develop a similar theory of topological persistence on these quivers.

For the study of representations on quivers with relations such as commutativity, the Auslander-Reiten theory plays an important role. In particular, for each quiver with relations, its Auslander-Reiten quiver captures all the isomorphism classes of indecomposables by a set of vertices and describes relationships among them by a set of arrows. Our approach explicitly constructs the Auslander-Reiten quivers of (\ref{eq:cl}) that are representation-finite. Furthermore, we show that the Auslander-Reiten quivers give a natural generalization of persistence diagrams. Applying these theoretical concepts and tools sheds new light on the theory of topological persistence. In this paper, we explain only specific portions of the Auslander-Reiten theory that are necessary to derive our main results in order for this paper to be self-contained. For a complete treatment of the Auslander-Reiten theory, the reader is referred to \cite{ass,ars,ringel}. 

\subsection{Main Results}\label{section:mainresults}
Our contributions in this paper are as follows:
\begin{enumerate}[(i)]
\item We prove that the commutative ladders (\ref{eq:cl}) of length $n\leq 4$ are representation-finite (Theorem \ref{theorem:cl}) by computing their Auslander-Reiten quivers, which we show in Figures \ref{fig:AR_f} to \ref{fig:AR_bbf}. 
\item We generalize the concept of persistence diagrams to functions on the vertex set of an Auslander-Reiten quiver. 
\item We present an algorithm for computing these generalized persistence diagrams in the case $n=3$ by using the structure of Auslander-Reiten quivers.
\item Finally, we show numerical examples including our original motivation from the TDA on glasses.
\end{enumerate}
\vspace{0.3cm}

This paper is organized as follows. In Section \ref{sec:preliminaries}, we briefly explain the results of \cite{cd} in our framework of representations on associative algebras. This serves as a link to prior works. 
Section \ref{section:pm_clq} is the core section of this paper. Our main theorem about the representation-finite property and the computation of the Auslander-Reiten quivers of the commutative ladders of length $n\leq 4$ is given in Section \ref{section:clq}. For this purpose, in Section \ref{sec:recipe}, we give a recipe for constructing the Auslander-Reiten quivers with a necessary introduction of concepts in the representation theory. 

We also construct the Auslander-Reiten quiver for conventional persistence modules in Section \ref{sec:ar_an} in order to compare this new framework with the existing one. 
This comparison also leads to a natural generalization of persistence diagrams, which we present in Section \ref{sec:generalization_pd}. In Section \ref{sec:arq_clfb}, we give some interpretations of the Auslander-Reiten quivers from the viewpoint of TDA by focusing on our original example (\ref{eq:pm_glass}). 

Section \ref{sec:algorithm} is devoted to presenting an algorithm for computing persistence diagrams. Motivated by our example (\ref{eq:pm_glass}),
we focus only on $n=3$.
Our algorithm computes persistence diagrams by inductively applying echelon form reductions to a given persistence module on a commutative ladder of finite type. In particular, the flowchart of our algorithm closely follows the structure of the corresponding Auslander-Reiten quiver. Based on these theoretical and computational tools, we show some numerical experiments in Section \ref{sec:numerics} to demonstrate the feasibility of our algorithm. 

In Appendix \ref{sec:appendix}, we give a complete list of the Auslander-Reiten quivers of commutative ladders of length $n\leq 4$.

\subsection{Related Works}\label{section:relatedworks}
The goal of this paper is to present a theoretical and algorithmic framework for persistence modules on commutative ladders in a self-contained and accessible way to practitioners of TDA. For this purpose, we adopt Auslander-Reiten quivers as our main tool. There are five reasons of this choice: 
(i) it is tractable and does not require much preparation from quiver representation theory, 
(ii) it can give a simple proof for the representation type once we fix our quiver, 
(iii) it can also give a clear algorithmic foundation,
(iv) the concept of persistence diagrams can be naturally generalized by means of  Auslander-Reiten quivers, and
(v) it can be easily applied to other types of quivers.

There are several works in the representation theory related to our paper. 
For example, the representation types of certain classes of algebras have been extensively studied in the literature. In connection with our paper, note that the triangular matrix algebra of an $\mathbb{A}_n$ quiver is isomorphic to the commutative ladder (\ref{eq:cl}).  
Then, the fact that (\ref{eq:cl}) is representation-finite for $n\leq 4$ and representation-infinite for $n \geq 5$ is a corollary to Theorem 4 of the paper \cite{ttma}. 
Furthermore, there are several papers \cite{ptma,decompo} treating the problem of computing indecomposable decompositions of representations. For example, it was shown in \cite{ptma} that, given a finite dimensional representation $V$ over a finite dimensional $K$-algebra, where $K$ is a finite field, there is a polynomial time algorithm for computing the indecomposable decomposition of $V$. Although these works are done in a general setting, we choose a more constructive strategy for the above reasons. 
We believe that this strategy will enlarge the scope of persistence modules and 
accelerate the integration between TDA and quiver representation theory.

We also remark about the relationship of our work to multidimensional persistence modules. 
The commutative ladders with horizontal arrows all directed forwards can be considered as a restricted version of the bifiltrations considered by Carlsson and Zomorodian in \cite{multi}. They show that there is no complete discrete invariant for persistence modules over multifiltrations in general case. In this paper, by our restriction, we provide a complete discrete invariant (and in fact, a finite one) for commutative ladders of length $n \leq 4$. This is given by the generalized persistence diagram provided in Definition \ref{definition:pd}. 
At the same time, we also remark that the commutative ladders allow some of the horizontal arrows to point backward. 

\section{Preliminaries}\label{sec:preliminaries}
In this section, we first recall some fundamental concepts from the representation theory of associative algebras. For more details, the reader may refer to \cite{ass,ars,ringel}. Then, we review the recent results by Carlsson and de Silva \cite{cd} which provide us with a connection between persistence modules and representations on $\A_n$ quivers.
\subsection{Basics of Representations of Associative Algebras}\label{section:basics_raa}
Let us denote our base field by $K$. A {\em quiver} $Q=(Q_0,Q_1,s,t)$ is given by two sets $Q_0,Q_1$ and two maps $s,t: Q_1\rightarrow Q_0$. The set $Q_0$ is called the set of vertices, and the set $Q_1$ is called the set of arrows. For every arrow $\alpha\in Q_1$, the vertices $s(\alpha)$ and $t(\alpha)$ are called the source and target of $\alpha$, respectively, and we denote the arrow by $\alpha:s(\alpha)\rightarrow t(\alpha)$. We simply denote a quiver by $Q=(Q_0,Q_1)$ without writing the two maps $s,t$. 
The underlying graph $\bar{Q}$ of $Q$ is the undirected graph obtained by removing the orientations of arrows in $Q$.

A {\em path} $(a\mid \alpha_1,\dots,\alpha_\ell\mid b)$ from a source $a\in Q_0$ to a target $b\in Q_0$ of length $\ell$ is given by a list of arrows $\alpha_i\in Q_1$ ($i=1,\dots,\ell$) such that $s(\alpha_1)=a$, $t(\alpha_i)=s(\alpha_{i+1})$ for $i=1,\dots,\ell-1$, and $t(\alpha_\ell)=b$. A path at a vertex $a$ of length $\ell=0$ is called the stationary path and denoted by $\epsilon_a=(a\mid\mid a)$. A path of length $\ell \geq 1$ whose source and target  coincide is called a cycle, and a quiver which contains no cycles is called acyclic. In this paper, we only consider acyclic connected quivers, and assume that $Q_0$ and $Q_1$ are finite.

\begin{definition}{\rm 
Given a quiver $Q$, the path algebra $\KQ$ is defined by a $K$-vector space spanned by all paths in $Q$ as its basis and equipped with a product
\[
	(a\mid \alpha_1,\dots,\alpha_\ell\mid b)(c\mid \beta_1,\dots,\beta_m\mid d)=\delta_{b,c}(a\mid \alpha_1,\dots,\alpha_\ell,\beta_1,\dots,\beta_m\mid d), 
\]
where $\delta_{b,c}$ is the Kronecker delta. The product of two arbitrary elements of $\KQ$ is defined by linear extension of this product.
}\end{definition}

For a quiver $Q$, a {\em relation} $\rho$ is a $K$-linear combination
\[
	\rho = \sum_{i=1}^k c_i w_i,~~~c_i\in K
\]
of paths $w_i$ of length at least two with the same source and target. We note that a set of relations $\{\rho_1,\dots,\rho_s\}$ naturally generates a two-sided ideal $I=\langle \rho_1,\dots,\rho_s\rangle$ in $\KQ$ and defines the quotient algebra $A=\KQ/I$. 
For each $a\in Q_0$, let us denote the class of $\epsilon_a$ in $\KQ/I$ by $e_a = \bar{\epsilon}_a = \epsilon_a+I$.
%
\begin{lemma}~Let $Q$ be a quiver. The path algebra $\KQ$ is a finite dimensional associative algebra with the identity $1_{\KQ}=\sum_{a\in Q_0}\epsilon_a$, and $\{\epsilon_a\mid a\in Q_0\}$ is a set of orthogonal idempotents in $\KQ$. 
For an ideal $I=\langle \rho_1,\dots,\rho_s\rangle$ generated by relations $\rho_i$, the quotient algebra $A=\KQ/I$ is also a finite dimensional associative algebra with the identity $1_A=\sum_{a\in Q_0}e_a$, and $\{e_a\mid a\in Q_0\}$ is a set of orthogonal idempotents in $A$.

\end{lemma}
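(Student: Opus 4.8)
The plan is to verify each assertion directly from the definitions of the path algebra product and the quotient construction. First I would establish the claims for $\KQ$ itself. To see that $\KQ$ is finite dimensional, observe that since $Q$ is acyclic with finite $Q_0$ and $Q_1$, the length of any path is bounded (no path can revisit a vertex), so there are only finitely many paths in $Q$; these form a basis of $\KQ$ by definition, hence $\dim_K \KQ < \infty$. Associativity follows from a routine check on basis elements: composing three paths $w_1 w_2 w_3$ in either order yields $\delta_{t(w_1),s(w_2)}\,\delta_{t(w_2),s(w_3)}$ times the concatenated path (when defined), and this is symmetric in the grouping; then extend bilinearly. For the identity, I would check that $\left(\sum_{a\in Q_0}\epsilon_a\right)\cdot(b\mid \alpha_1,\dots,\alpha_\ell\mid c) = (b\mid \alpha_1,\dots,\alpha_\ell\mid c)$ because the Kronecker delta $\delta_{a,b}$ kills all terms except $a=b$; similarly on the right using $\delta_{c,a}$. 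Orthogonality and idempotency of the $\epsilon_a$ is immediate: $\epsilon_a \epsilon_b = (a\mid\mid a)(b\mid\mid b) = \delta_{a,b}\,\epsilon_a$.

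Next I would deduce the statements for $A = \KQ/I$ from those for $\KQ$ by general facts about quotient rings. Since $I$ is a two-sided ideal, $A$ is an associative $K$-algebra, and the quotient map $\pi:\KQ\to A$ is a surjective algebra homomorphism, so $\dim_K A \leq \dim_K \KQ < \infty$. The image $\pi(1_{\KQ}) = \sum_{a\in Q_0}\pi(\epsilon_a) = \sum_{a\in Q_0} e_a$ is the identity of $A$ because homomorphisms send identities to identities (and here $\pi$ is unital by construction). Finally, the relations $e_a e_b = \pi(\epsilon_a)\pi(\epsilon_b) = \pi(\epsilon_a\epsilon_b) = \pi(\delta_{a,b}\epsilon_a) = \delta_{a,b} e_a$ transport orthogonality and idempotency to $A$ directly.

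I do not expect a serious obstacle here; the statement is essentially foundational and every step is a short computation or a standard quotient-ring fact. The one point that deserves a word of care is why $\KQ$ is finite dimensional, since that genuinely uses the hypothesis that $Q$ is acyclic (it would fail for a quiver with an oriented cycle, where arbitrarily long paths exist). So the "main obstacle," such as it is, is simply to remember to invoke acyclicity to bound path lengths and conclude finiteness of the basis; everything else is bookkeeping with the Kronecker delta in the multiplication rule.
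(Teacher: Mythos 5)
Your proof is correct and follows essentially the same route as the paper's: finiteness from acyclicity of $Q$ together with finiteness of $Q_0$ and $Q_1$, a direct check of associativity and the identity element using the Kronecker delta in the product, orthogonality and idempotency from $\epsilon_a\epsilon_b=\delta_{a,b}\epsilon_a$, and the transfer to $A=\KQ/I$. The only cosmetic difference is that you phrase the passage to the quotient via the surjective unital homomorphism $\pi:\KQ\to A$, where the paper merely says the statements for $A$ are ``proven similarly''; these amount to the same thing.
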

\begin{proof}
Since $Q_0$ and $Q_1$ are finite and $Q$ is acyclic, there are only a finite number of paths in $Q$. This implies that $\KQ$ is   finite dimensional. The associativity of $\KQ$ results from the definition of products as compositions of paths. 
It is straightforward to verify that $1_{\KQ}=\sum_{a\in Q_0}\epsilon_a$ is the identity in $\KQ$. 
It follows from $\epsilon_a\epsilon_b=\delta_{a,b}\epsilon_a$ that $\{\epsilon_a\mid a\in Q_0\}$ is a set of orthogonal idempotents. 
The statements for $A=\KQ/I$ are proven similarly.
\qed\end{proof}

In this work, we consider only associative algebras defined by quotients of path algebras.
Next, we introduce some basic facts about quiver representations.
\begin{definition}{\rm 
Let $Q$ be a quiver. A representation $M=(M_a,\varphi_\alpha)_{a\in Q_0,\alpha\in Q_1}$ on $Q$ is a set of $K$-vector spaces $\{M_a\mid a\in Q_0\}$ together with linear maps $\varphi_\alpha:M_a\rightarrow M_b$ for each arrow 
$\alpha:a\rightarrow b$ in $Q_1$. 
}
\end{definition}

In this paper, we assume that each vector space on the vertices is finite dimensional. 
For representations $M=(M_a,\varphi_\alpha)$ and $N=(N_a,\psi_\alpha)$ on $Q$, a morphism $f:M\rightarrow N$ is a set of linear maps $f_a:M_a\rightarrow N_a$ at each $a\in Q_0$ such that these maps satisfy the commutativity $\psi_\alpha f_a=f_b\varphi_\alpha$ of the diagram
\[
\xymatrix{
	M_a \ar[r]^-{\varphi_{\alpha}}\ar[d]_-{f_a}&M_b\ar[d]^-{f_b}\\
	N_a \ar[r]_-{\psi_\alpha}&N_b
}
\]
for each $\alpha:a\rightarrow b$.

Let $M=(M_a,\varphi_\alpha)$ be a representation on a quiver $Q$ and  $w=( a\mid \alpha_1,\dots,\alpha_\ell \mid b)$ be a path in $Q$. The {\em evaluation} $\varphi_w$ of $M$ on the path $w$ is the composition of linear maps 
$\varphi_w=\varphi_{\alpha_\ell}\dots\varphi_{\alpha_1}$. For a linear combination $\rho=\sum_{i=1}^kc_i w_i$ of paths $w_i$ with the same source and target, we extend the definition of the evaluation as $\varphi_\rho = \sum_{i=1}^kc_i\varphi_{w_i}$. 
Then, we define a representation $M=(M_a,\varphi_\alpha)_{a\in Q_0,\alpha\in Q_1}$ on $A=\KQ/I$ as a representation on the quiver $Q$ satisfying $\varphi_\rho = 0$ for any relation $\rho\in I$. For $I=\langle \rho_1,\dots,\rho_s\rangle$, $M$ is a representation on $A=\KQ/I$ if and only if $\varphi_{\rho_i}=0$ for $i=1,\dots,s$. A morphism of representations on $A=\KQ/I$ is defined in a way similar to the above. The set of morphisms from $M$ to $N$ is denoted by $\Hom_A(M,N)$.

For a representation $M$ on $A=\KQ/I$, the {\em dimension vector} $\dim M$ is defined by 
\[
	\dim M=(\dim M_a)_{a\in Q_0}.
\]
We also express $\dim M$ by assigning $\dim M_a$ at each $a\in Q_0$ on the given quiver $Q$.
For example, the dimension vector of a representation 
\[
\spaceInd{K,K^3,K^2,K^2,K,0}
\]
is given by $\smdimVec{1,3,2,2,1,0}$.

For a quiver $Q$, let us denote by ${\rm rep}(\KQ)$ the category of representations on $Q$ and their morphisms. 
For $A=\KQ/I$, we denote by ${\rm rep}(A)$ the full subcategory of ${\rm rep}(\KQ)$ consisting of representations on $A$. Let ${\rm mod}(\KQ)$ and ${\rm mod}(A)$ be the categories of finitely generated right $\KQ$-modules and right $A$-modules, respectively. Then, the following proposition states that we can identify representations and modules by categorical equivalences. 
\begin{proposition}
For a quiver $Q$, there exists an equivalence of categories
\[
	{\rm mod}(\KQ)\simeq {\rm rep}(\KQ).
\]
Similarly, given $A=\KQ/I$ where $I$ is an ideal of $\KQ$ generated by relations, there also exists an equivalence of categories
\[
	{\rm mod}(A)\simeq {\rm rep}(A).
\]
\end{proposition}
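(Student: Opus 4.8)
The plan is to write down explicit quasi-inverse functors between $\module(\KQ)$ and $\rep(\KQ)$ and then to observe that they restrict to an equivalence between $\module(A)$ and $\rep(A)$. First I would define $F\colon\module(\KQ)\to\rep(\KQ)$: for a right $\KQ$-module $M$ set $M_a:=M\epsilon_a$, so that $M=\bigoplus_{a\in Q_0}M\epsilon_a$ as $K$-vector spaces because the $\epsilon_a$ are orthogonal idempotents with $\sum_{a}\epsilon_a=1_{\KQ}$ (the Lemma); for an arrow $\alpha\colon a\to b$ one has $\epsilon_a\alpha\epsilon_b=\alpha$ in $\KQ$, so right multiplication by $\alpha$ restricts to a $K$-linear map $\varphi_\alpha\colon M\epsilon_a\to M\epsilon_b$, and we put $F(M):=(M\epsilon_a,\varphi_\alpha)$. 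A $\KQ$-linear map $f\colon M\to N$ satisfies $f(M\epsilon_a)\subseteq N\epsilon_a$ and commutes with right multiplication by each $\alpha$, so $F(f):=(f|_{M\epsilon_a})_{a}$ is a morphism of representations, and functoriality is clear. Since $\KQ$ is finite dimensional over $K$ by the Lemma, a $\KQ$-module is finitely generated precisely when it is finite dimensional over $K$, so $F$ does land in the category of finite-dimensional representations and its domain is indeed $\module(\KQ)$.

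Next I would define $G\colon\rep(\KQ)\to\module(\KQ)$: for $M=(M_a,\varphi_\alpha)$ put $\tilde M:=\bigoplus_{a\in Q_0}M_a$, with right $\KQ$-action determined $K$-bilinearly by $x\cdot w:=\varphi_w(x)$ when $x\in M_a$ and the path $w=(c\mid\alpha_1,\dots,\alpha_\ell\mid d)$ has $c=a$, and $x\cdot w:=0$ otherwise; in particular $x\cdot\epsilon_a=x$. Associativity of this action is exactly the identity $\varphi_{w_1w_2}=\varphi_{w_2}\circ\varphi_{w_1}$ for composable paths, which is immediate from the definition $\varphi_w=\varphi_{\alpha_\ell}\cdots\varphi_{\alpha_1}$ of the evaluation. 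On morphisms $G$ takes the direct sum of the component maps, which is $\KQ$-linear precisely because of the compatibility squares defining a morphism of representations.

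Then I would verify that $F$ and $G$ are mutually quasi-inverse. For $M\in\module(\KQ)$ the summation map $GF(M)=\bigoplus_{a}M\epsilon_a\to M$, $(x_a)_a\mapsto\sum_a x_a$, is an isomorphism of $\KQ$-modules, natural in $M$ (the action rebuilt by $G$ from the maps $x\mapsto x\alpha$ is again right multiplication by $\alpha$, and $1_{\KQ}=\sum_a\epsilon_a$); for $M=(M_a,\varphi_\alpha)\in\rep(\KQ)$ one reads off $FG(M)_a=\tilde M\epsilon_a=M_a$ with right multiplication by $\alpha$ equal to $\varphi_\alpha$, so $FG(M)=M$, naturally. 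This establishes $\module(\KQ)\simeq\rep(\KQ)$. For $A=\KQ/I$ with $I=\langle\rho_1,\dots,\rho_s\rangle$, a $\KQ$-module $M$ underlies an $A$-module exactly when $MI=0$; since $M=\bigoplus_a M\epsilon_a$ and the $\rho_i$ generate $I$ (and are combinations of paths with fixed source and target), this holds if and only if $\varphi_{\rho_i}=0$ on each $M\epsilon_a$ for all $i$, i.e.\ if and only if $F(M)\in\rep(A)$. Hence $F$ and $G$ restrict to functors between $\module(A)$ and $\rep(A)$, the natural isomorphisms above restrict as well, and $\module(A)\simeq\rep(A)$ follows.

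\textbf{Main obstacle.} There is no serious difficulty here; the content is entirely bookkeeping. The one point requiring care is a consistent choice of conventions: with the left-to-right product on paths fixed in the Definition and the convention $\varphi_w=\varphi_{\alpha_\ell}\cdots\varphi_{\alpha_1}$, the evaluation satisfies $\varphi_{w_1w_2}=\varphi_{w_2}\circ\varphi_{w_1}$, which is exactly what makes $x\cdot w:=\varphi_w(x)$ a \emph{right} $\KQ$-module action (and what makes $\epsilon_a\alpha\epsilon_b=\alpha$ send $M\epsilon_a$ to $M\epsilon_b$); reversing either convention would force one to work with left modules instead. Once this is pinned down, all the verifications above are routine.
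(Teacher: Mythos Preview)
Your proposal is correct and follows essentially the same approach as the paper: both define the functor $F$ via $M\mapsto (M\epsilon_a,\ x\mapsto x\alpha)$ and the functor $G$ via the direct sum $\bigoplus_a M_a$ with right action given by evaluation along paths, and then check that these are quasi-inverse. The only cosmetic difference is that the paper works directly with $A=\KQ/I$ and recovers the $\KQ$ case by setting $I=0$, whereas you establish the $\KQ$ equivalence first and then restrict to $A$ by characterizing $A$-modules as those $\KQ$-modules annihilated by $I$; this is the same argument in reverse order.
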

\begin{proof}
We define two functors $F:{\rm mod}(A)\rightarrow {\rm rep}(A)$ and $G:{\rm rep}(A)\rightarrow {\rm mod}(A)$ as follows.
Let $\{e_a\mid a\in Q_0\}$ be the set of orthogonal idempotents determined by the stationary paths of $A=\KQ/I$.  For an $A$-module $M$, we define a vector space on each $a\in Q_0$ by $F(M)_a=Me_a$. To an arrow $\alpha:a\rightarrow b$, we assign a linear map $\varphi_\alpha: F(M)_a\rightarrow F(M)_b$ by $\varphi_\alpha(x)=x\bar{\alpha}$, where $\bar{\alpha}=\alpha+I\in A$. Note that $\varphi_\alpha(x)=\varphi_\alpha(xe_a)=xe_a \bar{\alpha}=xe_a \bar{\alpha}e_b\in F(M)_b$. Given a morphism $f:M\rightarrow N$ of $A$-modules $M$ and $N$, we define a $K$-linear map $F(f)_a: F(M)_a\rightarrow F(N)_a$ by the restriction of $f$ to $F(M)_a$. This is well-defined because $f(F(M)_a)=f(Me_a)=f(M)e_a\subset F(N)_a$, and we can easily check the commutativity for each $\alpha\in Q_1$. These data define the functor $F:{\rm mod}(A)\rightarrow {\rm rep}(A)$.

On the other hand, let $M=(M_a,\varphi_\alpha)_{a\in Q_0,\alpha\in Q_1}$ be a representation on $A$. As a $K$-vector space, we construct the direct sum $G(M)=\bigoplus_{a\in Q_0}M_a$. Then, the right multiplication of $\KQ$ on $G(M)$ is defined as follows. For a stationary path $\epsilon_a$ at $a\in Q_0$ and $x=(x_b)_{b\in Q_0}\in G(M)$, we define $(x\epsilon_a)_c=\delta_{a,c}x_a$ for $c\in Q_0$. For a path $w=( a\mid \alpha_1,\dots,\alpha_\ell \mid b)$ from $a$ to $b$,  we define $xw$ by $(xw)_c=\delta_{b,c}\varphi_w(x_a)$ for $c\in Q_0$, and we linearly extend it for arbitrary elements in $\KQ$. Note that we have $x\rho = 0$ for any $\rho\in I$. This means that these right multiplications by $\KQ$ naturally define those by $A=\KQ/I$ and determine a  right $A$-module structure on $G(M)$. Given a morphism $f=(f_a): M\rightarrow N$ for representations $M$ and $N$ on $A$, we define a morphism $G(f)=\bigoplus_{a\in Q_0}f_a: G(M)=\bigoplus_{a\in Q_0}M_a\rightarrow G(N)=\bigoplus_{a\in Q_0}N_a$ as the direct sum. These data define the functor $G:{\rm rep}(A)\rightarrow {\rm mod}(A)$. 

It is straightforward to check that these functors lead to an equivalence of categories ${\rm mod}(A)\simeq {\rm rep}(A)$. The first statement follows by taking $I=0$. 
\qed\end{proof}

Let $M=(M_a,\varphi_\alpha)_{a\in Q_0,\alpha\in Q_1}$ be a representation on $A$. A {\em subrepresentation} $N$ of $M$ is given by a set $\{N_a\}_{a\in Q_0}$ of vector subspaces $N_a\subset M_a$ closed under the linear map $\varphi_\alpha(N_a)\subset N_b$ for any $\alpha:a\rightarrow b$. Hence, the subrepresentation $N$ naturally becomes a representation on $A$ by restriction of the linear maps $\varphi_\alpha$. 
A subrepresentation $N$ is called a direct summand if there exists a subrepresentation $N'$ such that $M_a= N_a\oplus N'_a$ for all $a\in Q_0$. In this case, we write $M= N\oplus N'$ and call this a direct sum decomposition of $M$. A representation $M$ is called {\em indecomposable} if $M$ is nonzero and cannot be decomposed as a direct sum of proper subrepresentations.  
On decompositions of representations, the following theorem is fundamental. 
\begin{theorem}[Krull-Remak-Schmidt]\label{theorem:krull}
A representation $M$ on a finite dimensional algebra $A$ can be expressed as a direct sum
\begin{equation}\label{eq:indecomposable_decomposition}
	M\simeq N^{(1)}\oplus \dots \oplus N^{(n)}
\end{equation}
of indecomposables $N^{(1)},\dots,N^{(n)}$. 
In addition, if 
\[
	M\simeq N^{(1)}\oplus\dots\oplus N^{(n)}\simeq L^{(1)}\oplus\dots\oplus L^{(\ell)},
\]
where $N^{(1)},\dots,N^{(n)}$ and $L^{(1)},\dots,L^{(\ell)}$ are indecomposables,
then $n=\ell$ and there exists a permutation $\sigma$ on $\{1,\dots,n\}$ such that
$N^{(i)}\simeq L^{(\sigma(i))}$ for $i=1,\dots,n$.
\end{theorem}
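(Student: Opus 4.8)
The statement to prove is the Krull--Remak--Schmidt theorem for representations of $A = KQ/I$: every finite-dimensional representation decomposes as a finite direct sum of indecomposables, uniquely up to permutation and isomorphism.

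\textbf{Plan of proof.} The plan is to reduce everything to two ingredients: (1) existence of a decomposition into indecomposables, which follows by induction on dimension, and (2) uniqueness, which follows from the fact that the endomorphism ring of an indecomposable representation is local. Throughout I work in $\rep(A)$, which by the preceding proposition is equivalent to $\module(A)$ for the finite-dimensional algebra $A$; since each $M_a$ is assumed finite dimensional and $Q_0$ is finite, $\dim_K M := \sum_{a \in Q_0} \dim_K M_a$ is a well-defined nonnegative integer, and this is the quantity on which all inductions run.

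\textbf{Step 1 (existence).} First I would argue by strong induction on $\dim_K M$. If $M$ is zero or indecomposable there is nothing to do. Otherwise $M \simeq N \oplus N'$ with $N, N'$ nonzero proper subrepresentations, so $\dim_K N, \dim_K N' < \dim_K M$ and both are strictly positive; by the induction hypothesis each decomposes into finitely many indecomposables, and concatenating these decompositions expresses $M$ as a finite direct sum of indecomposables.

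\textbf{Step 2 (endomorphism rings are local).} The key lemma is that if $N$ is an indecomposable representation then $\End_A(N)$ is a local ring, i.e. the non-invertible elements form a (two-sided) ideal. I would prove this using Fitting's lemma: for any $f \in \End_A(N)$, because $N$ is finite dimensional the descending chain $\image f \supseteq \image f^2 \supseteq \cdots$ and the ascending chain $\kernel f \subseteq \kernel f^2 \subseteq \cdots$ stabilize, so there is $m$ with $N = \image f^m \oplus \kernel f^m$ as representations. Indecomposability of $N$ forces one summand to be zero: either $\kernel f^m = 0$, in which case $f^m$ and hence $f$ is an automorphism, or $\image f^m = 0$, in which case $f$ is nilpotent. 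Thus every endomorphism of $N$ is either invertible or nilpotent. A standard ring-theoretic argument then upgrades this to locality: if $f$ is a non-unit (hence nilpotent) and $g$ is arbitrary, and $f + g$ were a unit, one derives that $g$ and then $1 - (f+g)^{-1}f \cdot(\text{stuff})$ leads to a contradiction; concretely, the non-units form an ideal because a sum of a nilpotent and a non-unit cannot be a unit in this setting. I expect \emph{this} step --- verifying that "every element is a unit or nilpotent" genuinely implies "local" --- to be the only subtle point, and I would handle it by the usual argument: suppose $f+g$ is a unit with $f,g$ non-units; multiplying by $(f+g)^{-1}$ we may assume $f+g = 1$, so $g = 1-f$ with $f$ nilpotent, whence $g$ is invertible (geometric series $1+f+f^2+\cdots$), contradicting that $g$ is a non-unit.

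\textbf{Step 3 (uniqueness).} Given two decompositions $M \simeq \bigoplus_{i=1}^n N^{(i)} \simeq \bigoplus_{j=1}^\ell L^{(j)}$ into indecomposables, I would induct on $n$. Composing the inclusion $N^{(1)} \hookrightarrow M$, the projections and inclusions for the $L^{(j)}$, and the projection $M \twoheadrightarrow N^{(1)}$, one gets $\mathrm{id}_{N^{(1)}} = \sum_{j=1}^\ell \pi_j \iota_j$ where $\iota_j : N^{(1)} \to L^{(j)}$ and $\pi_j : L^{(j)} \to N^{(1)}$ are the induced maps; more precisely each summand $\pi_j' \iota_j'$ (compositions through $L^{(j)}$) is an endomorphism of $N^{(1)}$, and they sum to the identity. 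Since $\End_A(N^{(1)})$ is local by Step 2, the identity is not a sum of non-units, so some composition $N^{(1)} \to L^{(j)} \to N^{(1)}$ is an automorphism of $N^{(1)}$; after reindexing take $j = 1$. Then $N^{(1)} \to L^{(1)}$ is a split mono and $L^{(1)} \to N^{(1)}$ a split epi, and since $L^{(1)}$ is indecomposable a short argument (the split mono onto a direct summand of $L^{(1)}$, which must be all of $L^{(1)}$ by indecomposability) shows $N^{(1)} \simeq L^{(1)}$. Finally, an exchange argument shows $\bigoplus_{i \geq 2} N^{(i)} \simeq \bigoplus_{j \geq 2} L^{(j)}$: the isomorphism $N^{(1)} \xrightarrow{\sim} L^{(1)}$ lets one replace $L^{(1)}$ by $N^{(1)}$ inside $M$ and cancel it, leaving complementary summands of equal (smaller) length, to which the induction hypothesis applies, yielding $n - 1 = \ell - 1$ and the required matching permutation on the remaining factors. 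Combining with $N^{(1)} \simeq L^{(1)}$ completes the induction.

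\textbf{Main obstacle.} The genuine content is entirely in Step 2 (Fitting's lemma plus the passage to locality); Steps 1 and 3 are formal once locality is in hand. Since these are classical facts about module categories over finite-dimensional algebras, in the write-up I would most likely cite \cite{ass,ars} for the details of the locality argument and spend the space instead on Steps 1 and 3, which is consistent with the paper's stated aim of recalling only what is needed.
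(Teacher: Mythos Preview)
Your proof is correct and follows the standard textbook route (Fitting's lemma $\Rightarrow$ local endomorphism rings $\Rightarrow$ exchange argument), which is exactly the argument one finds in the references \cite{ass,ars} the paper cites. Note, however, that the paper itself does \emph{not} prove this theorem: it is stated in the preliminaries as a classical fact and used without proof, so there is no ``paper's own proof'' to compare against beyond the implicit pointer to the literature. In that sense you have supplied more than the paper does, and what you supplied is the canonical argument.
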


From this theorem, the classification of representations on an associative algebra can be performed by studying indecomposable decompositions (\ref{eq:indecomposable_decomposition}). Hence, given $A=\KQ/I$, it is important to know 
\begin{enumerate}
\item how many isomorphism classes of indecomposables exist, and
\item what kinds of isomorphism classes of indecomposables exist.
\end{enumerate}

An associative algebra is said to be {\em representation-finite} if the number of all isomorphism classes of indecomposables is finite. Otherwise, it is said to be {\em representation-infinite}.
The following theorem provides a complete answer to the above two questions for quiver representations with no relations. Here we give the list of all isomorphism classes of indecomposables only for $\A_n$ quivers. For the complete statement and the lists for the other quivers of finite type, we refer to \cite{ass,ars,gabriel,ringel}. 
\begin{theorem}[Gabriel]\label{theorem:gabriel} 
A quiver $Q$ is representation-finite if and only if the underlying graph $\bar{Q}$ is one of the Dynkin diagrams $\A_n, \D_n, \E_6,\E_7, \E_8$. 
The isomorphism classes of indecomposables on quivers whose underlying graph is the $\A_n$ diagram
\[
\includegraphics{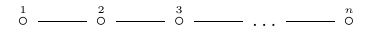}
\]
are given by the interval representations $\I[b,d]$ for $1\leq b\leq d\leq n$ defined by the set of vector spaces
\begin{equation}\label{eq:p_interval}
\I[b,d]_i=\left\{\begin{array}{ll}
K,&b\leq i\leq d\\
0, &{\rm otherwise}
\end{array}\right.
\end{equation}
for $i=1,\dots,n$ and the identity map for each $K\rightarrow K$.
This list of isomorphism classes is independent of orientations of arrows in $\A_n$ quivers.
\end{theorem}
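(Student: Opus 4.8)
The plan is to prove Gabriel's theorem for the $\A_n$ case directly, as only this portion is needed for the paper, and to carry it out by an elementary argument rather than invoking the full machinery of root systems. First I would reduce the statement about the classification of indecomposables to a single canonical orientation. Given any orientation of $\A_n$, reflection functors (BGP functors) at a sink or source transform $\rep(KQ)$ into $\rep(KQ')$ for the quiver $Q'$ obtained by reversing all arrows at that vertex; these functors induce a bijection on isomorphism classes of indecomposables except for the simple at the chosen vertex, and they preserve the underlying dimension-vector support. Since any two orientations of $\A_n$ are related by a finite sequence of such sink/source reversals, it suffices to establish the list of indecomposables for the linear orientation $1\to 2\to\cdots\to n$, and then observe that reflection functors carry interval representations to interval representations, so the list is orientation-independent.

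For the linear orientation, I would argue that every indecomposable is an interval representation $\I[b,d]$ by induction on $n$. The key observation is that for a representation $M=(M_i,\varphi_i)$ on $1\to 2\to\cdots\to n$, one can put the linear maps $\varphi_i:M_i\to M_{i+1}$ simultaneously into a normal form: choosing bases adapted to the flags $\kernel\varphi_i$ and $\image\varphi_{i-1}$, any such chain of linear maps decomposes as a direct sum of ``elementary'' pieces, each of which is an identity map $K\xrightarrow{\,\mathrm{id}\,}K$ stretched over a contiguous block of indices and zero elsewhere — this is exactly the statement that a representation of $\A_n$ is a direct sum of interval modules. Concretely, one can induct: split off $\kernel\varphi_1$ as a summand supported only at vertex $1$ (giving copies of $\I[1,1]$), then split off a complement of $\image\varphi_1$ inside $M_2$, and proceed; each step either produces an interval summand ending or beginning at the current vertex or reduces to a smaller problem. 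One then checks each $\I[b,d]$ is indecomposable because its endomorphism ring is $K$ (a local ring), and that $\I[b,d]\simeq\I[b',d']$ forces $(b,d)=(b',d')$ by comparing dimension vectors, so the isomorphism classes are precisely the pairs $1\le b\le d\le n$; since there are finitely many such pairs, $\A_n$ is representation-finite.

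For the ``if and only if'' of the first sentence I would invoke the standard count via the Tits quadratic form $q_Q(\mathbf{d})=\sum_{i\in Q_0}d_i^2-\sum_{\alpha\in Q_1}d_{s(\alpha)}d_{t(\alpha)}$, which is independent of orientation: a connected quiver is representation-finite iff $q_Q$ is positive definite iff $\bar Q$ is a Dynkin diagram $\A_n,\D_n,\E_6,\E_7,\E_8$, with indecomposables in bijection with positive roots. Showing $q_Q$ positive definite $\Leftrightarrow$ Dynkin is linear algebra on the associated symmetric matrix (a connectedness plus subgraph argument ruling out the extended Dynkin diagrams $\tilde\A_n,\tilde\D_n,\tilde\E$ as these carry a positive radical vector). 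The harder direction — that non-Dynkin implies representation-infinite — can be handled by exhibiting, for each extended Dynkin underlying graph, an explicit one-parameter family of non-isomorphic indecomposables, and noting that any non-Dynkin connected graph contains an extended Dynkin subgraph so that the corresponding representations extend by zero.

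The main obstacle I anticipate is the normal-form/decomposition step for the linear $\A_n$ quiver: making rigorous that an arbitrary commutative chain of linear maps splits as a direct sum of interval pieces requires a careful simultaneous choice of bases (or, equivalently, an inductive peeling argument that correctly tracks how kernels and images interact across adjacent vertices). Everything downstream — indecomposability of the $\I[b,d]$, the bijection with pairs $(b,d)$, and transport along reflection functors to arbitrary orientations — is then routine. In the write-up I would state the reflection-functor reduction, give the inductive decomposition for the linear orientation in a couple of paragraphs, and reference \cite{ass,ars,ringel} for the general Dynkin classification via the Tits form rather than reproducing it in full.
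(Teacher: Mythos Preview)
The paper does not prove this theorem. Gabriel's theorem is stated as a classical background result, and the surrounding text explicitly defers to the literature: ``For the complete statement and the lists for the other quivers of finite type, we refer to \cite{ass,ars,gabriel,ringel}.'' There is therefore no proof in the paper to compare your proposal against.

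Your sketch is one of the standard routes and is essentially sound. One small inaccuracy: reflection functors do not ``preserve the underlying dimension-vector support''; the reflection $s_i$ changes the $i$-th coordinate of the dimension vector. What is true, and what your argument actually needs, is that the reflection of an interval dimension vector is again an interval dimension vector (or zero, when the indecomposable in question is the simple $S(i)$ that the functor kills). With that correction the transport-along-reflections step goes through.

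It is worth noting that the paper does, later in Theorem~\ref{theorem:ar_an}, recover the list of indecomposables for the particular orientation $\Aright_n$ as a byproduct of building its Auslander-Reiten quiver via the almost-split-sequence recipe. That argument is quite different from yours: it starts from the indecomposable projectives $P(k)=\I[k,n]$ and inductively computes $\tau^{-1}$ via minimal injective presentations and the Nakayama functor, terminating once a finite connected component is obtained. But this is presented as an illustration of the recipe, covers only one orientation, and is not offered as a proof of Gabriel's theorem itself.
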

\begin{remark}{\rm 
Note that Gabriel's theorem provides us with a classification of quivers without relations, and does not tell us anything about the associative algebra $A=\KQ/I$ with nontrivial relations $I$. 
Hence, it is possible to have a quiver with some relations to be representation-finite, even if its underlying graph is not included in the list provided by Gabriel's theorem.
}\end{remark}

Let $Q$ be an $\A_n$ quiver
\[
\includegraphics{./fig2}
\]
where each 
$\myarrowLR{1}$
expresses an orientation given by either $\myarrowR{1}$ or $\myarrowL{1}$. By assigning a symbol $f$ or $b$ to $\myarrowR{1}$ or $\myarrowL{1}$, respectively, we can represent an $\A_n$ quiver by a sequence ${\tau_n=O_1\dots O_{n-1}}$ of these symbols. For example, $\tau_3=ff$ and $\tau_4=fbb$ represent the $\A_3$ quiver 
$\begin{tikzcd}\circ \rar &\circ \rar &\circ  \end{tikzcd}$ 
and the $\A_4$ quiver 
$\begin{tikzcd}\circ \rar &\circ &\circ \lar & \circ \lar \end{tikzcd}$ , respectively.

\begin{definition}\label{definition:cl}{\rm 
Let $n\geq 2$. The ladder quiver $L(\tau_n)$
\[
\includegraphics{./fig1}
\]
of length $n$ is defined by two $\A_n$ quivers of the same orientation $\tau_n=O_1\dots O_{n-1}$, together with an arrow 
$\begin{tikzcd} \overset{\tiny{i'}}{\circ} \rar & \overset{\tiny{i}}{\circ}\end{tikzcd}$ for every $i=1,\dots,n$.
The commutative ladder $\CL(\tau_n)$ is defined as the associative algebra $\CL(\tau_n)=K\!L(\tau_n)/I$, where the ideal $I$ is generated by commutative relations $\alpha\beta=\gamma\delta$ and $\gamma\alpha=\delta\beta$ in the squares
\[
\includegraphics{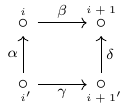}~~\raisebox{+20pt}{ and }~~\includegraphics{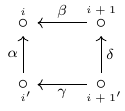}\raisebox{+20pt}{,}
\]
respectively.
}\end{definition}

\subsection{Persistence Modules on $\A_n$}\label{sec:pm_an}
Let $\mathbb{X}: X_1\subset\dots\subset X_n$ be a filtration of topological spaces. The concepts of persistent homology and barcodes for $\mathbb{X}$ are first introduced by  Edelsbrunner, Letscher, and Zomorodian \cite{elz}. Then, Zomorodian and Carlsson \cite{zc} explain persistent homology as a graded $K[z]$-module
\begin{equation}\label{eq:ph}
    \begin{tikzcd}
        H_\ell(X_1) \rar & H_\ell(X_2) \rar & H_\ell(X_3) \rar &
        \hdots \rar & H_\ell(X_n)
    \end{tikzcd}
\end{equation}
over the polynomial ring $K[z]$ with one indeterminate $z$, and call this a persistence module. Here, the action of $z$ on the persistence module is defined by the induced homology maps of $X_i\hookrightarrow X_{i+1}$.
They also show a  clear relationship between barcodes and elementary divisors of the graded $K[z]$-modules. 

In the paper \cite{cd}, Carlsson and de Silva generalize persistence modules as representations
\[
    M:
    \begin{tikzcd}
        H_\ell(X_1) \rar[leftrightarrow] &
        H_\ell(X_2) \rar[leftrightarrow] &
        H_\ell(X_3) \rar[leftrightarrow] &
        \hdots \rar[leftrightarrow] &
        H_\ell(X_n)
    \end{tikzcd}
\]
on any $\A_n$ quiver, where each $\myarrowLR{1}$ expresses either a linear map $\myarrowL{1}$ or $\myarrowR{1}$ determined by the orientation in its $\A_n$ quiver. 
From this extension, we can generally treat a sequence of topological spaces connected by continuous maps
\begin{equation}\label{eq:zigzag}
    \begin{tikzcd}
        X_1 \rar[leftrightarrow] &
        X_2 \rar[leftrightarrow] &
        X_3 \rar[leftrightarrow] &
        \hdots \rar[leftrightarrow] &
        X_n
    \end{tikzcd},
\end{equation}
which is not necessarily a filtration, and study persistent topological features in the sequence.
In this setting, the persistent homology (\ref{eq:ph}) can be regarded as a representation on an $\A_n$ quiver
\begin{equation}\label{eq:Aright_n}
\includegraphics{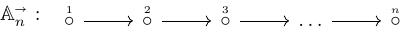}
\raisebox{+5pt}{.}
\end{equation}
We denote this $\A_n$ quiver by $\Aright_n$, since it frequently appears in this paper.

By Theorems \ref{theorem:krull} and \ref{theorem:gabriel}, the persistence module $M$ can be uniquely expressed as an indecomposable decomposition
\begin{equation}\label{eq:pm_decom}
	M\simeq \bigoplus_{1\leq b\leq d\leq n}\I[b,d]^{m_{bd}}, ~~~m_{bd}\in \N_0=\{0,1,2,\dots\}.
\end{equation}
From this decomposition, we obtain a barcode as a multiset of closed intervals $[b,d]$ appearing in \eqref{eq:pm_decom} even in the setting of persistence modules on $\A_n$ quivers with arbitrary orientations. Hence, we define persistence diagrams as follows:
\begin{definition}\label{definition:pd_an}{\rm 
The persistence diagram $D_M$ of a persistence module $M$ on an $\A_n$ quiver is a multiset of points in $\{(b,d)\in \N^2\mid 1\leq b\leq d\leq n\}$ such that the multiplicity of $(b,d)$ is $m_{bd}$ from the indecomposable decomposition (\ref{eq:pm_decom}). }
\end{definition}

A point $(b,d)$ in a persistence diagram, or equivalently an interval representation $\I[b,d]$, encodes a topological feature in (\ref{eq:zigzag}) which appears at $X_b$ and disappears at $X_{d+1}$. This means that the birth, death, and lifetime of this feature are given by $b, d+1$, and $d-b+1$, respectively. We note that in this setting, the death time $d+1$ is greater than the coordinate $d$ by 1. Hence, the diagonal consisting of generators with zero lifetime, which is necessary to discuss stability theorems \cite{proximity,stability}, is given by $\{(i,i-1)\mid 1\leq i\leq n\}$. 
In order to deal with the diagonal in Section \ref{sec:generalization_pd}, we introduce the notation $Z(i)=\I[i,i-1]$ corresponding to the point $(i,i-1)\in \N^2_0$ and call this the {\em zero representation} at $(i,i-1)$. 

In view of these progresses, we define a {\em persistence module} on $A=\KQ/I$ as a representation on $A$. In particular, we study a generalization of the above scenario into persistence modules on commutative ladders.
For this purpose, we consider the following issues:
\begin{enumerate}[(Q1)]
\item Are commutative ladders representation-finite or not?
\item If they are representation-finite, what are the counterparts of interval representations and persistence diagrams?
\end{enumerate}
In the next section, we give answers to these questions.
\section{Persistence Modules on Commutative Ladders}\label{section:pm_clq}

\subsection{Auslander-Reiten Quivers}
We introduce two concepts from the representation theory of associative algebras. In this section, let $A=\KQ/I$ be an associative algebra defined by a quiver $Q$ and an ideal $I=\langle \rho_1,\dots,\rho_s\rangle$ generated by relations $\rho_i$.
\begin{definition}\label{definition:irreducible}{\rm 
Let $M,N$ be representations in $\rep(A)$. A morphism $f:M\rightarrow N$ is called irreducible if 
\begin{enumerate}[(i)]
\item $f$ is neither a section nor a retraction, and
\item any factorization $f=f_1f_2$:
\[
\xymatrix{
M \ar[rr]^{f}\ar[rd]_{f_2}&&N\\
&W\ar[ru]_{f_1}&
}
\]
implies that $f_1$ is a retraction or $f_2$ is a section.
\end{enumerate}
}\end{definition}

Recall that a morphism $s:M\rightarrow N$ is called a {\em section} if it has a left inverse. Dually, a morphism $r:N\rightarrow M$ is called a {\em retraction} if it has a right inverse. It follows from (i) that an irreducible morphism $f$ cannot be a split monomorphism nor a split epimorphism. On the other hand, (ii) implies that $f$ does not allow nontrivial factorizations. In this sense, we can think of this definition as capturing the concept of irreducibility with respect to factorizations of morphisms.

\begin{definition}\label{definition:ar_quiver}{\rm 
The Auslander-Reiten quiver $\Gamma(A)=(\Gamma_0,\Gamma_1)$ of $A$ is defined as follows:
\begin{enumerate}
\item The vertices in $\Gamma_0$ are all the isomorphism classes of indecomposables in $\rep(A)$.
\item For two indecomposable isomorphism classes $[M]$ and $[N]$, an arrow $[M]\rightarrow [N]$ is assigned in $\Gamma_1$ if and only if there exists an irreducible morphism $M\rightarrow N$ (this is independent of the choice of representatives).
\end{enumerate}
}
\end{definition}
\begin{remark}
{\rm 
This definition of the Auslander-Reiten quiver is slightly different from the usual to simplify the presentation. In the usual definition, the set of vertices is the same, but we assign $\ell=\dim {\rm Irr}(M,N)$ arrows from $[M]$ to $[N]$, where ${\rm Irr}(M,N)$ is the $K$-vector space of irreducible morphisms. Hence, Definition \ref{definition:ar_quiver}  corresponds to projecting multiple arrows into one, and we take only the existence of irreducible morphisms into account. This simple version is sufficient for the arguments in this paper.
}
\end{remark}

The Auslander-Reiten quiver $\Gamma(A)$ has the following characteristic property. 
\begin{proposition}\label{proposition:finitecc}
If there exists a finite connected component $\Gamma$ in the Auslander-Reiten quiver $\Gamma(A)$, then $\Gamma=\Gamma(A)$.
\end{proposition}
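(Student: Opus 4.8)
The plan is to show that a finite connected component $\Gamma$ of the Auslander-Reiten quiver is closed under irreducible morphisms in a strong sense, and then use the fact that $\rep(A)$ is connected as a category (since $Q$ is connected) to conclude that $\Gamma$ must contain every indecomposable. The key input is the standard structural fact from Auslander-Reiten theory — which I would invoke as a known result from \cite{ass,ars,ringel} — that for any indecomposable nonprojective $N$ there is an almost split sequence $0\to \tau N\to E\to N\to 0$, and the indecomposable summands of $E$ are exactly the sources of arrows ending at $[N]$; dually for nonprojective sources and almost split sequences starting at a module, and an analogous statement for projectives (the radical of the indecomposable projective $P$ decomposes into the sources of arrows into $[P]$) and injectives. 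The upshot I want is: if $[N]\in\Gamma_0$ lies in the finite component $\Gamma$, then every arrow in $\Gamma(A)$ incident to $[N]$ — both into and out of $[N]$ — already lies in $\Gamma$, together with its other endpoint.

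The main steps, in order: First I would record the local structure of $\Gamma(A)$ at a vertex $[N]$: the arrows ending at $[N]$ correspond to the indecomposable summands of the middle term of the almost split sequence ending at $N$ (if $N$ is nonprojective) or of $\rad P_N$ (if $N$ is projective), and symmetrically the arrows starting at $[N]$ correspond to the middle term of the almost split sequence starting at $N$ (if $N$ is noninjective) or to the summands of $P_N/\mathrm{soc}$-type data for injectives. The crucial finiteness observation is that these lists are finite and \emph{complete}: there are no arrows incident to $[N]$ other than these. Hence, if $\Gamma$ is a connected component containing $[N]$, it must contain \emph{all} of these neighbors and the connecting arrows — a connected component is by definition closed under taking incident arrows and their endpoints. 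Second, iterating this, $\Gamma$ is closed under passing to any vertex reachable by a path of arrows (in either direction) in $\Gamma(A)$, i.e. $\Gamma$ is exactly the connected component (in the usual graph-theoretic sense on the underlying undirected graph of $\Gamma(A)$) of any of its vertices. Third, and this is the real content, I would argue that $\Gamma(A)$ is connected as an undirected graph whenever $Q$ is connected: given any two indecomposables $M, M'$, connectedness of $Q$ forces $\Hom_A$-morphisms to link them through chains of irreducible maps. A clean way is to use that the indecomposable projectives $P_a$, $a\in Q_0$, all lie in one component because the arrows of $Q$ induce irreducible maps (radical inclusions) among the $P_a$ and $Q$ is connected; and every indecomposable $M$ admits a projective cover $P\to M$, which factors through a path of irreducible morphisms reaching $M$ from the component of the $P_a$. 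Therefore $\Gamma(A)$ has a single connected component, so a finite component $\Gamma$ must be all of $\Gamma(A)$.

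The hard part will be the last step — justifying that a \emph{finite} connected component forces connectedness of the entire quiver, or more precisely extracting exactly what is needed without invoking the full apparatus. The subtlety is that if one merely knows $\Gamma$ is closed under incident arrows, one still has to rule out a second component disjoint from $\Gamma$; this is where one genuinely uses that $A$ is connected (which follows from $Q$ being connected, an assumption in force in the paper) together with the projective-cover argument linking every indecomposable to the projectives. I would lean on the standard theorem that for a connected algebra the Auslander-Reiten quiver having a finite component implies the algebra is representation-finite and that component is everything; if the paper wants self-containedness one can sketch the projective-cover chain explicitly, but it is safe to cite \cite{ass,ars,ringel} for this. A second, more technical point to handle carefully is the boundary behavior at projectives and injectives, where the "arrows in equal arrows out" symmetry of the translation quiver breaks; the almost-split-sequence description of neighbors must be replaced by the radical/socle description there, but this does not affect the closure-under-neighbors argument, only requires stating it in the correct case-split form.
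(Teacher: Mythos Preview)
The paper does not prove this proposition at all; it simply refers the reader to page 141 of \cite{ass} or page 79 of \cite{ringel}. So in that narrow sense your proposal already goes beyond what the paper does, and your final fallback --- ``it is safe to cite \cite{ass,ars,ringel}'' --- is exactly what the paper chose.

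That said, your sketched argument has a genuine gap. Your step~3 asserts that $\Gamma(A)$ is connected whenever $Q$ is connected, via ``every indecomposable $M$ admits a projective cover $P\to M$, which factors through a path of irreducible morphisms reaching $M$ from the component of the $P_a$.'' This is false in general: for representation-infinite algebras the Auslander--Reiten quiver typically has many connected components (preprojective, preinjective, regular tubes, etc.), and most indecomposables are \emph{not} linked to the projectives by any finite chain of irreducible morphisms. Your argument never uses the hypothesis that $\Gamma$ is \emph{finite}, so it cannot be correct as stated --- if it were, the proposition would hold without that hypothesis.

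The actual proof in the cited references uses finiteness essentially, through the Harada--Sai lemma: the dimensions of the indecomposables in $\Gamma$ are bounded (since $\Gamma$ is finite), so any sufficiently long composition of non-isomorphisms among them is zero. One then shows that for any indecomposable $N\notin\Gamma$ and any $M\in\Gamma$, every morphism $N\to M$ factors (via the right almost split map into $M$) through arbitrarily long chains of non-isomorphisms inside $\Gamma$, hence is zero; dually $\Hom_A(M,N)=0$. Connectedness of $A$ then forces $N$ not to exist. Your intuition about factoring through almost split sequences is the right starting point, but the missing ingredient is precisely this bounded-length argument that converts finiteness of $\Gamma$ into vanishing of Hom.
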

For a proof, the reader is referred to page 141 in \cite{ass} or page 79  in \cite{ringel}. This proposition is used to explicitly construct Auslander-Reiten quivers in later sections.

\subsection{Representation Type of Commutative Ladders}\label{section:clq}
The following theorem provides us with commutative ladders of finite type and explicitly shows their Auslander-Reiten quivers.
\begin{theorem}\label{theorem:cl}
The commutative ladders $\CL(\tau_n)$ of length $n\leq 4$ are representation-finite for arbitrary orientations $\tau_n$. The Auslander-Reiten quivers of $\CL(\tau_n)$ with $n\leq 4$ are shown in Figures \ref{fig:AR_f} to \ref{fig:AR_bbf}.
\end{theorem}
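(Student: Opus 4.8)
The plan is to prove representation-finiteness by explicitly constructing the Auslander-Reiten quiver for each orientation $\tau_n$ with $n \le 4$, and then invoking Proposition~\ref{proposition:finitecc}. Since the construction is finite and connected in each case, that proposition forces the constructed quiver to be the \emph{entire} Auslander-Reiten quiver $\Gamma(\CL(\tau_n))$; its finiteness then immediately yields that only finitely many isomorphism classes of indecomposables exist, i.e.\ $\CL(\tau_n)$ is representation-finite. First I would reduce the number of cases: the length-$n$ orientations $\tau_n \in \{f,b\}^{n-1}$ come in pairs related by reversing all arrows (which replaces $\CL(\tau_n)$ by its opposite algebra and dualizes $\rep$), so up to this duality there are only a handful of genuinely distinct commutative ladders to treat: $\tau_2 = f$; $\tau_3 \in \{ff, fb, bf\}$ (with $bb \sim ff$); and $\tau_4 \in \{fff, ffb, fbf, fbb, bff, bbf, \dots\}$ modulo reversal. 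For the trivial base case $n=2$, the algebra $\CL(f)$ is the commutative square $K(\bullet\to\bullet\to\bullet\leftarrow\bullet)$-type algebra, whose indecomposables can be listed directly (this is a well-known tame-or-finite situation — in fact the commutative square is representation-finite).

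The heart of the argument is the construction recipe, which the excerpt promises to give in Section~\ref{sec:recipe}: starting from the indecomposable projective modules $P_a = e_a A$ at each vertex $a \in Q_0$, one computes irreducible maps out of them, and then propagates through the quiver using the Auslander-Reiten translate $\tau = D\trace$ and almost-split (Auslander-Reiten) sequences $0 \to \tau N \to E \to N \to 0$, reading off the middle term $E$ to discover new indecomposables and new irreducible morphisms. One keeps applying this "knitting" procedure, alternating the computation of AR sequences with consistency checks coming from the mesh relations, until the process closes up — i.e.\ every newly produced indecomposable has already appeared and no new arrows are forced. Concretely, for each $\tau_n$ I would: (1) write down the Loewy structure of the projectives and injectives of $\CL(\tau_n) = KL(\tau_n)/I$ using the commutativity relations; (2) knit from the projective-free side, tracking dimension vectors $\underline{\dim}$ of each indecomposable as a $2\times n$ array of nonnegative integers; (3) verify that the knitting terminates in finitely many meshes; (4) record the resulting finite quiver as Figures~\ref{fig:AR_f}--\ref{fig:AR_bbf}. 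Because the knitting stays within the rank/dimension-vector bounds available at length $\le 4$, each diagram has only finitely many vertices, and connectedness is built into the knitting process (every vertex is linked to a projective or to previously knitted vertices by construction).

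The main obstacle is purely one of bookkeeping volume rather than conceptual difficulty: one must carry out the knitting correctly and completely for every orientation $\tau_n$, $n \le 4$, which means verifying a substantial number of almost-split sequences and checking, in each case, that the procedure genuinely terminates (that no mesh produces an indecomposable of strictly larger dimension vector that would keep the process going forever). The subtle points are (a) correctly identifying when an irreducible map has a non-simple or "wide" middle term forced by the commutativity relation — i.e.\ making sure the mesh relations of the commutative ladder, not just those of the underlying $\A_n$, are respected — and (b) handling the projectives and injectives, which sit at the boundary of the AR quiver and require the separate computation of $\Hom$ and $\cokernel$ data rather than pure knitting. Once the finite connected $\Gamma$ is in hand for each $\tau_n$, Proposition~\ref{proposition:finitecc} closes the proof with no further work. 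The case $n \ge 5$ being possibly representation-infinite (Theorem~\ref{theorem:cl_infinite}) is exactly the signal that the knitting must be \emph{shown} to terminate for $n \le 4$ and cannot be taken for granted.
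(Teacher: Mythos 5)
Your proposal follows essentially the same strategy as the paper's own proof: construct each Auslander-Reiten quiver by knitting from the indecomposable projectives via almost-split sequences (with $\tau^{-1}$ computed through the Nakayama functor as in Proposition~\ref{proposition:nakayama}), verify that the process closes up on a finite connected set of indecomposables, and invoke Proposition~\ref{proposition:finitecc}. The opposite-algebra/duality reduction you propose to cut the number of orientations is sound and is not used in the paper (which treats each $\tau_n$ separately, exploiting only the internal up-down symmetry of $\Gamma(\CL(fb))$), but it does not change the substance of the argument.
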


Figures \ref{fig:AR_f} to \ref{fig:AR_bbf} are placed in Appendix \ref{sec:appendix}, where the isomorphism class of each indecomposable is expressed by its dimension vector. In Section \ref{sec:proof}, we give a proof of this theorem for the commutative ladder $\CL(fb)$
\[
\xymatrix{
\circ \ar[r]&\circ&\ar[l]\circ\\
\circ \ar[u]\ar[r]&\ar[u]\circ \ar[u] &\ar[l]\circ\ar[u]
}
\]
by explicitly constructing its Auslander-Reiten quiver, which is shown also in Figure \ref{fig:AR_fb_maintext} for the sake of convenience.
\begin{figure}[h!]
\begin{center}
\includegraphics[width=14cm]{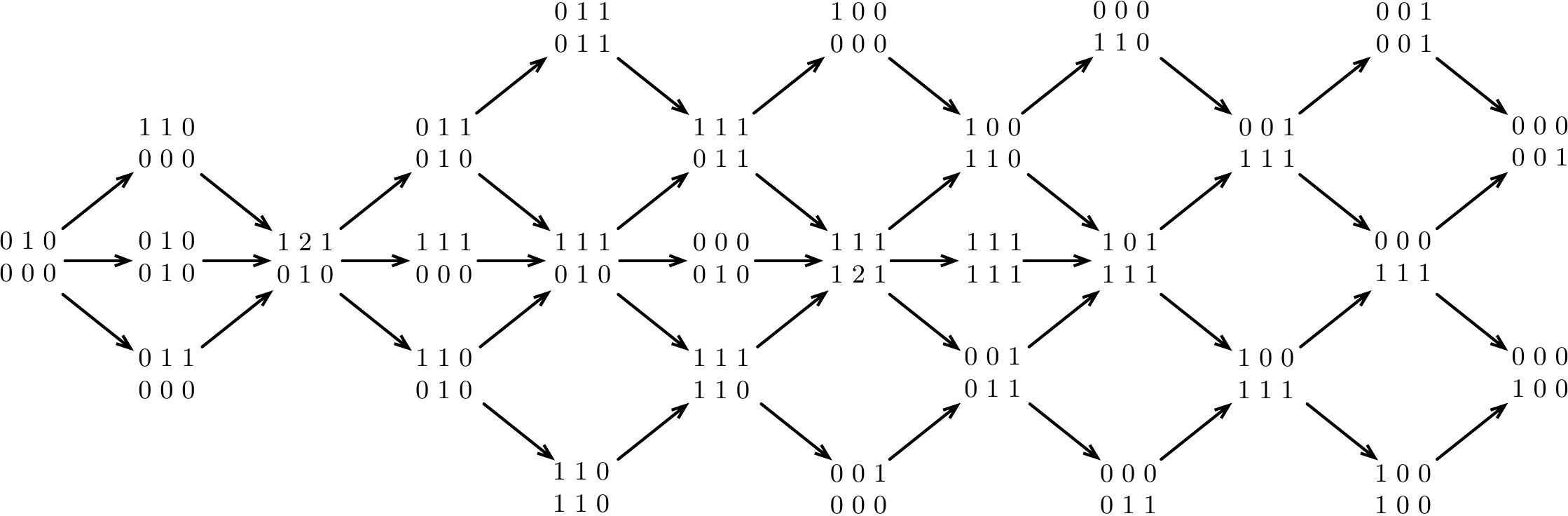}
\caption{The Auslander-Reiten quiver of $\CL(fb)$.}
\label{fig:AR_fb_maintext}
\end{center}
\end{figure}
We note that there exists an up-down symmetry in $\Gamma(\CL(fb))$ induced by the left-right symmetry of the quiver $\CL(fb)$.
In each indecomposable, each linear map connecting two one dimensional vector spaces is the identity map $K\rightarrow K$, while those in 
$\smdimVec{1,2,1,1,1,1}$
and 
$\smdimVec{0,1,0,1,2,1}$
are given in matrix forms by
\begin{eqnarray}
&&\xymatrix{
K \ar[r]&K&\ar[l]K\\
K \ar[r]_-{f_{12}}\ar[u]&K^{2}\ar[u]^-{f_{25}} &\ar[l]^-{f_{32}}K\ar[u]
},~~~
f_{12}=\left[\begin{array}{c}1\\0\end{array}\right],~
f_{32}=\left[\begin{array}{c}0\\1\end{array}\right],~
f_{25}=\left[\begin{array}{cc}1 &1\end{array}\right],\label{eq:twodimentry1}\\
&&\xymatrix{
K \ar[r]^-{f_{45}}&K^2&\ar[l]_-{f_{65}}K\\
0 \ar[r]\ar[u]&K\ar[u]^-{f_{25}} &\ar[l]0\ar[u]
},~~~
f_{45}=\left[\begin{array}{c}1\\0\end{array}\right],~
f_{65}=\left[\begin{array}{c}0\\1\end{array}\right],~
f_{25}=\left[\begin{array}{c}1 \\1\end{array}\right].\label{eq:twodimentry2}
\end{eqnarray}
As explained in Section \ref{sec:introduction}, persistence modules on the commutative ladder $\CL(fb)$ frequently arise in TDA. In Section \ref{sec:arq_clfb}, we discuss in detail the interpretations of these isomorphism classes of indecomposables for TDA.

The next theorem shows that the converse of Theorem \ref{theorem:cl} also holds.
\begin{theorem}\label{theorem:cl_infinite}
The commutative ladders $\CL(\tau_n)$ of length $n\geq 5$ are representation-infinite for arbitrary orientations $\tau_n$. \end{theorem}

This follows as a corollary to Theorem 4 of the paper \cite{ttma}. In Section \ref{sec:proof}, we give a constructive proof of this statement using the Auslander-Reiten quivers.

%

%
%
%
%
\subsection{Recipe for Auslander-Reiten Quivers}\label{sec:recipe}
We introduce a recipe to construct the Auslander-Reiten quiver $\Gamma(A)$ of $A=\KQ/I$. For this purpose, let us recall some fundamental definitions and their properties. We basically follow the exposition in \cite{ass} with appropriate simplifications (see also \cite{ars,ringel}).

A representation $P$ in $\rep(A)$ is {\em projective} if, for any epimorphism $f: M\rightarrow N$ and any morphism $g:P\rightarrow N$, there exists a morphism $h:P\rightarrow M$ satisfying $g=fh$. Dually, a representation $I$ is {\em injective} if, for any monomorphism $f: L\rightarrow M$ and any morphism $g:L\rightarrow I$, there exists a morphism $h:M\rightarrow I$ satisfying $g=hf$. A representation $S$ is called {\em simple} if $S\neq 0$ and any subrepresentation of $S$ is either zero or $S$. Furthermore, given a representation $M$, the {\em radical} $\rad M$ of $M$ is the intersection of all maximal subrepresentations of $M$. 
\begin{definition}{\rm 
Let $a\in Q_0$ be a vertex of the quiver $Q=(Q_0,Q_1)$ defining $A=KQ/I$.
\begin{enumerate}
\item $P(a)=(P(a)_b,\varphi_\alpha)_{b\in Q_0,\alpha\in Q_1}$ is the representation consisting of a vector space $P(a)_b$ at each $b\in Q_0$ spanned by the paths from $a$ to $b$ in $A$, and a linear map $\varphi_\alpha$ for each $\alpha:b\rightarrow c$ given by the right multiplication of $\bar{\alpha}$ on $P(a)_b$.
\item $I(a)=(I(a)_b,\varphi_\alpha)_{b\in Q_0,\alpha\in Q_1}$ is the representation consisting of a vector space $I(a)_b$ at each $b\in Q_0$ which is the dual vector space spanned by the paths from $b$ to $a$ in $A$, and a linear map $\varphi_\alpha$ for each $\alpha:b\rightarrow c$ given by the dual of the left multiplication of $\bar{\alpha}$.
\item $S(a)=(S(a)_b,\varphi_\alpha)_{b\in Q_0,\alpha\in Q_1}$ is the representation consisting of a vector space 
\[
	S(a)_b=\left\{\begin{array}{ll}
	K, & b=a\\
	0, & b\neq a
	\end{array}\right.
\]
at each $b\in Q_0$, and $\varphi_\alpha=0$ for every $\alpha\in Q_1$.
\item $\rad P(a)=(M_b,\varphi_\alpha)_{b\in Q_0,\alpha\in Q_1}$ is the representation consisting of a vector space $M_b$ at each $b\in Q_0$ spanned by the non-stationary paths from $a$ to $b$ in $A$, and a linear map $\varphi_\alpha$ for each $\alpha:b\rightarrow c$ given by the right multiplication of $\bar{\alpha}$ on $M_b$.
\end{enumerate}
}\end{definition}

We can easily verify that $S(a)$ is simple for each $a\in Q_0$.
It is also known that all the indecomposable projectives and injectives are precisely given by $P(a)$ and $I(a)$ for 
$a\in Q_0$, respectively, and that $\rad P(a)$ is the radical of $P(a)$ (e.g., see III.2 in \cite{ass}).

%
%
\begin{proposition}\label{proposition:simpleprojective}
For a simple projective $P$, there are no irreducible morphisms ending at $P$. Dually, for a simple injective $I$, there are no irreducible morphisms starting at $I$.
\end{proposition}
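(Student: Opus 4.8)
The plan is to prove the first statement directly from the definition of irreducible morphism, and then obtain the second by duality. Let $P$ be a simple projective representation in $\rep(A)$, and suppose $f\colon M\to P$ is an irreducible morphism. Since $P$ is simple, the image $\image f$ is a subrepresentation of $P$, hence either $0$ or $P$. If $\image f=0$ then $f=0$, which is impossible since the zero morphism into a nonzero object factors through $0$ and fails condition (ii) (equivalently, $0$ is a retraction, contradicting (i)). So $f$ is an epimorphism. Now I would invoke projectivity of $P$: applying the lifting property to the epimorphism $f\colon M\to P$ and the identity $g=\mathrm{id}_P\colon P\to P$, there exists $h\colon P\to M$ with $f h=\mathrm{id}_P$. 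This says $f$ is a retraction, contradicting condition (i) in Definition \ref{definition:irreducible}. Hence no irreducible morphism can end at $P$.

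For the dual statement, let $I$ be a simple injective and suppose $f\colon I\to N$ is irreducible. Since $I$ is simple, $\kernel f$ is a subrepresentation of $I$, hence $0$ or $I$; if $\kernel f=I$ then $f=0$, which again contradicts irreducibility (the zero morphism out of a nonzero object factors through $0$, with $0$ a section). So $f$ is a monomorphism. Applying injectivity of $I$ to the monomorphism $f\colon I\to N$ and $g=\mathrm{id}_I\colon I\to I$, we obtain $h\colon N\to I$ with $h f=\mathrm{id}_I$, so $f$ is a section, contradicting condition (i). Therefore no irreducible morphism starts at $I$.

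I do not expect a genuine obstacle here; the only point requiring a little care is the bookkeeping in the degenerate case $f=0$, where one must check that the zero morphism indeed violates condition (i) (it is both a section and a retraction when the source, respectively target, is zero, but here the source/target is the nonzero object $P$ or $I$, so the relevant failure is that $0$ factors as $M\to 0\to P$ with $0\to P$ not a retraction unless $P=0$, and dually). Once the nonzero case is reduced to $f$ being epic (resp. monic), the projective lifting (resp. injective extension) property applied to the identity immediately exhibits $f$ as a retraction (resp. section), contradicting part (i) of the definition of irreducibility. The argument is short and self-contained, using only the definitions of projective, injective, simple, and irreducible morphism recalled above.
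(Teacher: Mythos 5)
Your proof is correct and, apart from one small slip, shares the key step with the paper: both use projectivity to rule out the case that $f$ is an epimorphism (an epimorphism onto a projective object splits, so $f$ would be a retraction, contradicting condition~(i)). Where you diverge is in handling the non-epic case. The paper notes that a proper image $\image f \subsetneq P$ must lie in the unique maximal subrepresentation $\rad P$, factors $f$ through the inclusion $\rad P \hookrightarrow P$, uses condition~(ii) of irreducibility to conclude that the factored map $M \to \rad P$ is a section, and then observes $\rad P = 0$ forces $M = 0$, so $f$ is trivially a section. You instead invoke simplicity directly: $\image f$ is $0$ or $P$, and in the non-epic case $\image f = 0$ gives $f = 0$, which fails condition~(ii) because it factors as $M \to 0 \to P$ with neither leg split. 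Your route is marginally more elementary (it avoids the radical entirely), while the paper's radical-based factorization is the template it reuses for the more general Proposition~\ref{proposition:mal_pro_inj} about non-simple indecomposable projectives. The one slip: in your parenthetical and again in the closing paragraph you say the zero morphism ``violates condition (i)'' and that ``$0$ is a retraction''; for $M \neq 0$ and $P \neq 0$ the zero morphism $M \to P$ is neither a section nor a retraction, so it satisfies (i) and it is condition~(ii) that fails, which is exactly what your factorization through $0$ shows. The substance is right, but the labeling of which condition fails is inverted.
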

\begin{proof}
Let $f:M\rightarrow P$ be an irreducible morphism ending at a simple projective $P$. It can be easily checked that $f$ must be nonzero. Thus, $f$ is surjective because $P$ is simple. Since $P$ is projective, $f$ is a retraction, which is a contradiction. The second statement can be proved similarly.

%
\qed\end{proof}

\begin{definition}{\rm 
A morphism $f: L\rightarrow M$ is called left minimal if every endomorphism $h:M\rightarrow M$ with $hf=f$ is an automorphism. Dually, a morphism $g: M\rightarrow N$ is called right minimal if every endomorphism $h:M\rightarrow M$ with $gh=g$ is an automorphism.
}
\end{definition}
\begin{definition}{\rm 
A morphism $f: L\rightarrow M$ is called left almost split if $f$ is not a section, and, for any $f':L\rightarrow M'$ that is not a section, there exists $h:M\rightarrow M'$ such that $f'=hf$. Dually,
A morphism $g: M\rightarrow N$ is called right almost split if $g$ is not a retraction, and, for any $g':M'\rightarrow N$ that is not a retraction, there exists $h:M'\rightarrow M$ such that $g'=gh$. 
}
\end{definition}

A morphism is called {\em left minimal almost split} (or {\em right minimal almost split}) if it is left (resp., right) minimal and left (resp., right) almost split. 

We adopt the following convention to express morphisms on direct sums. Given morphisms $f_i: L\rightarrow M_i$ ($i=1,2$), the morphism $f=(f_1~f_2)^t:L\rightarrow M_1\oplus M_2$ is defined as $f(x)=(f_1(x),f_2(x))$. Similarly, given $g_i: M_i\rightarrow N$ ($i=1,2$), the morphism $g=(g_1~g_2):M_1\oplus M_2\rightarrow N$ is defined as $g(x_1,x_2)=g_1(x_1)+g_2(x_2)$.
\begin{proposition}\label{proposition:al_indecomposable}
Given a left almost split morphism $f: L\rightarrow M$, $L$ is indecomposable. Dually, 
given a right almost split morphism $g: M\rightarrow N$, $N$ is indecomposable.
\end{proposition}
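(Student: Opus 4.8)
The plan is to prove the contrapositive: if $L$ decomposes as $L = L_1 \oplus L_2$ with both $L_i$ nonzero, then a left almost split morphism $f: L \to M$ cannot exist. So suppose $f: L \to M$ is left almost split and write $L = L_1 \oplus L_2$. Denote by $\iota_i: L_i \to L$ the canonical inclusions. The key idea is that the inclusion $\iota_1: L_1 \to L$ is not a section, because a left inverse $L \to L_1$ composed with $\iota_2$ would exhibit $L_2$ as mapping isomorphically onto a complement of $L_1$ — more directly, $\iota_1$ is a section only if $L_1 = L$, i.e.\ $L_2 = 0$. (Concretely, if $r\iota_1 = \mathrm{id}_{L_1}$ then $e = \iota_1 r$ is an idempotent endomorphism of $L$ with image $L_1$; but since $f$ is left almost split, $L$ is forced to behave rigidly — actually the cleanest route avoids idempotents and uses the defining property directly.)

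First I would apply the left almost split property to $\iota_1: L_1 \to L$, which is not a section: there exists $h_1: M \to L_1$ with $\iota_1 = h_1 f$. Similarly, applying it to $\iota_2: L_2 \to L$ gives $h_2: M \to L_2$ with $\iota_2 = h_2 f$. Then I would form $h = (h_1~h_2)^t: M \to L_1 \oplus L_2 = L$ and observe that $h f = (h_1 f~h_2 f)^t = (\iota_1~\iota_2)^t$. But $(\iota_1~\iota_2)^t: L \to L$ is exactly the identity map $\mathrm{id}_L$, since $\iota_1$ and $\iota_2$ are the structural inclusions of the direct sum decomposition of $L$. Hence $h f = \mathrm{id}_L$, which means $f$ is a section — contradicting that a left almost split morphism is by definition not a section. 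This contradiction shows $L$ must be indecomposable. Note that $L \neq 0$ is needed too: if $L = 0$ then $\mathrm{id}_L: L \to L$ itself is a section (vacuously), so by the same token $f$ would be a section; thus "indecomposable" here includes nonzero, consistent with the paper's convention.

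The dual statement follows by the formally dual argument. Given a right almost split $g: M \to N$ and a decomposition $N = N_1 \oplus N_2$ with canonical projections $\pi_i: N \to N_i$, each $\pi_i$ is not a retraction (a right inverse would force $N_i = N$, i.e.\ the other summand to vanish), so there exist $h_i: N_i \to M$ with $\pi_i = g h_i$; assembling $h = (h_1~h_2): N_1 \oplus N_2 \to M$ gives $g h = \mathrm{id}_N$, making $g$ a retraction, a contradiction. So $N$ is indecomposable.

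I do not expect a serious obstacle here; the only point requiring a moment's care is the bookkeeping with direct-sum morphisms — specifically verifying that $(\iota_1~\iota_2)^t = \mathrm{id}_L$ under the conventions fixed just before the proposition, and dually that $(\pi_1~\pi_2)$ composed appropriately is $\mathrm{id}_N$ — and checking that $\iota_i$ genuinely fails to be a section (respectively $\pi_i$ fails to be a retraction) when both summands are nonzero. These are immediate from the universal property of biproducts. The essential mechanism is simply that the almost-split property lets us "split off" each inclusion through $f$, and reassembling these partial splittings reconstructs a splitting of $f$ itself.
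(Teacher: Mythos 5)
There is a genuine error here: you have the morphisms in the wrong direction, and in fact it breaks both the types and the ``not a section'' claim.

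Recall what left almost split means: the test morphisms $f'$ to which the splitting property is applied must have \emph{source} $L$, i.e.\ $f': L \to M'$ is not a section implies $f' = h f$ for some $h: M \to M'$. You instead tried to apply it to the inclusion $\iota_1: L_1 \to L$, whose source is $L_1$, not $L$; the defining property says nothing about such a map. Concretely the equation $\iota_1 = h_1 f$ cannot even hold: $h_1 f$ has source $L$ and target $L_1$, while $\iota_1$ has source $L_1$ and target $L$. Worse, $\iota_1$ \emph{is} a section --- the projection $p_1$ is a left inverse, $p_1\iota_1 = \mathrm{id}_{L_1}$ --- so even if the definition did apply, your hypothesis ``$\iota_1$ is not a section'' is false. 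The same inversion occurs in your dual argument: $\pi_i: N \to N_i$ \emph{is} a retraction (right inverse $\iota_i$), and $\pi_i$ does not have target $N$, so the right almost split property cannot be applied to it and $\pi_i = g h_i$ is a type mismatch.

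The fix, which is exactly what the paper does, is to swap the roles. For the left almost split case use the \emph{projections} $p_i: L \to L_i$: these have source $L$, and $p_1$ is not a monomorphism (its kernel contains $L_2 \neq 0$), hence not a section. The property gives $h_i: M \to L_i$ with $p_i = h_i f$, and then $(h_1~h_2)^t f = (p_1~p_2)^t = \mathrm{id}_L$, exhibiting $f$ as a section --- contradiction. Dually, for the right almost split case use the \emph{inclusions} $\iota_i: N_i \to N$: these have target $N$, and $\iota_1$ is not an epimorphism, hence not a retraction; one gets $h_i: N_i \to M$ with $\iota_i = g h_i$ and $g (h_1~h_2) = (\iota_1~\iota_2) = \mathrm{id}_N$, contradiction. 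Your overall strategy (split both pieces through $f$ and reassemble a splitting of $f$) is the right idea and matches the paper, but you need projections in the primal case and inclusions in the dual, not the other way around.
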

\begin{proof}
If not, let $L=L_1\oplus L_2$ be a nontrivial direct sum decomposition and $p_i:L\rightarrow L_i$ be the corresponding projections. Since $p_i$ is not a monomorphism, it is not a section. Then, there exists $h_i:M\rightarrow L_i$ such that $p_i=h_if$. However, it follows from $(h_1~h_2)^tf=(p_1~p_2)^t=1_L$ that $f$ is a section, and this contradicts the assumption that $f$ is left almost split. The second statement is proved in a similar way.
\qed\end{proof}

The following is a uniqueness property (up to isomorphism) of left and right minimal almost split morphisms.
\begin{proposition}\label{proposition:mal_unique}
Let $f: L\rightarrow M$ and $f': L\rightarrow M'$ be left minimal almost split. Then, there exists an isomorphism $h: M\rightarrow M'$ such that $f'=hf$. Dually, 
let $g: M\rightarrow N$ and $g': M'\rightarrow N$ be right minimal almost split. Then, there exists an isomorphism $h: M\rightarrow M'$ such that $g=g'h$.
\end{proposition}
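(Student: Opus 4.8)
The plan is to prove the uniqueness (up to isomorphism) of left minimal almost split morphisms by exploiting the universal-type property that ``almost split'' provides, and then upgrading the resulting factorization map to an isomorphism using ``left minimal.'' First I would apply the left almost split property of $f: L \to M$ to the morphism $f': L \to M'$: since $f'$ is almost split, in particular $f'$ is not a section, so there exists $h: M \to M'$ with $f' = hf$. Symmetrically, applying the left almost split property of $f': L \to M'$ to the non-section morphism $f$, I obtain $h': M' \to M$ with $f = h'f'$. Combining these gives $f = h'hf$ and $f' = hh'f'$.

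Now comes the step where minimality enters. From $f = (h'h)f$ and the fact that $f$ is left minimal, the endomorphism $h'h: M \to M$ must be an automorphism; likewise $hh': M' \to M'$ is an automorphism from $f'$ being left minimal. Having $h'h$ and $hh'$ both invertible forces $h: M \to M'$ to be an isomorphism: a standard argument shows that if $vu$ and $uv$ are both automorphisms then $u$ is an automorphism, with inverse $(vu)^{-1}v = v(uv)^{-1}$. (Concretely, $h$ has the left inverse $(h'h)^{-1}h'$ and the right inverse $h'(hh')^{-1}$, and a morphism with both a left and a right inverse is invertible with these two coinciding.) This yields the desired isomorphism $h: M \to M'$ satisfying $f' = hf$.

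For the dual statement about right minimal almost split morphisms $g: M \to N$ and $g': M' \to N$, I would simply run the mirror-image argument: since $g'$ is not a retraction, the right almost split property of $g$ produces $h: M' \to M$ with $g' = gh$; symmetrically there is $h': M \to M'$ with $g = g'h'$; then right minimality of $g$ and $g'$ makes $hh'$ and $h'h$ automorphisms, so $h: M \to M'$ is an isomorphism with $g = g'h$. Alternatively one can invoke the opposite algebra $A^{\mathrm{op}}$ to deduce the dual formally from the first statement.

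I do not expect a genuine obstacle here; the only point requiring a little care is the linear-algebra-free verification that two-sided ``quasi-invertibility'' ($vu$ and $uv$ both automorphisms) implies invertibility of $u$ in a general additive category --- this is purely formal and uses only that $\mathrm{End}(M)$ and $\mathrm{End}(M')$ are rings --- and making sure at the outset that each almost split morphism really is not a section (resp.\ retraction), which is part of the definition and so is automatic. The main conceptual content is just the interplay: ``almost split'' gives existence of the comparison maps in both directions, and ``minimal'' promotes their composites to isomorphisms.
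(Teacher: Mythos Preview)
Your proof is correct and follows essentially the same approach as the paper: use the almost split property in both directions to obtain $h$ and $h'$, then apply left minimality to conclude $h'h$ and $hh'$ are automorphisms, hence $h$ is an isomorphism. The paper's version is simply more terse, omitting the explicit verification that two-sided quasi-invertibility forces invertibility.
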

\begin{proof}
Since $f$ and $f'$ are not sections, there exist $h:M\rightarrow M'$ and $h':M'\rightarrow M$ such that $f'=hf$ and $f=h'f'$, respectively. These lead to $f=h'hf$ and $f'=hh'f'$. Then, the minimality of $f$ and $f'$ implies that $h'h$ and $hh'$ are automorphisms, and hence $h$ and $h'$ are isomorphisms. The second statement is proved similarly. 
\qed\end{proof}

\begin{proposition}\label{proposition:irr_mal}
Let $f: L\rightarrow M$ be left minimal almost split. Then, $f$ is irreducible. 
Furthermore, $f':L\rightarrow M'$ is irreducible if and only if $M'\neq 0$ and there exists a direct sum decomposition $M\simeq M'\oplus M''$ and a morphism $f'':L\rightarrow M''$ such that $(f'~f'')^t:L\rightarrow M'\oplus M''$ is left minimal almost split. 
Dually, let $g: M\rightarrow N$ be right minimal almost split. Then, $g$ is irreducible. 
Furthermore, $g':M'\rightarrow  N$ is irreducible if and only if $M'\neq 0$ and there exists a direct sum decomposition $M\simeq M'\oplus M''$ and a morphism $g'':M''\rightarrow N$ such that $(g'~g''): M'\oplus M''\rightarrow N$ is right minimal almost split.
\end{proposition}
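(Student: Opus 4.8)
The plan is to prove the ``left'' statements; the ``dual'' ones follow by the standard opposite-algebra duality (replace $A$ by $A^{\mathrm{op}}$, which swaps sections/retractions, left/right almost split, etc.), so I will not repeat those arguments.

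First I would show that a left minimal almost split $f\colon L\to M$ is irreducible. Condition (i) of Definition~\ref{definition:irreducible} is immediate: $f$ is not a section by hypothesis, and it cannot be a retraction either, for if it were then $L$ would decompose as $M$ plus a complement, contradicting the fact that $L$ is indecomposable (Proposition~\ref{proposition:al_indecomposable}). For condition (ii), suppose $f=f_1 f_2$ with $f_2\colon L\to W$ and $f_1\colon W\to M$. If $f_2$ is a section we are done, so assume $f_2$ is not a section. Then, since $f$ is left almost split, there is $h\colon M\to W$ with $f_2 = h f$. Hence $f_1 f_2 = f$ gives $f_1 h f = f$, and left minimality of $f$ forces $f_1 h$ to be an automorphism of $M$; therefore $f_1$ is a retraction (it has the right inverse $h(f_1 h)^{-1}$). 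This establishes (ii), so $f$ is irreducible.

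Next, the ``furthermore'' part. For the ``if'' direction: suppose $M\simeq M'\oplus M''$ with $(f'\ f'')^t\colon L\to M'\oplus M''$ left minimal almost split. Composing with the projection $\pi'\colon M'\oplus M''\to M'$ recovers $f'$. A projection from a nontrivial direct sum is not a section onto $M'$ unless $M''=0$; combining this observation with the irreducibility criterion requires care, so instead I would argue directly: identify $(f'\ f'')^t$ with some $f$ as in the first paragraph (up to the isomorphism $M\simeq M'\oplus M''$), and note that $f'=\pi' f$ is a ``partial'' map onto a genuine summand with $M'\neq 0$. To see $f'$ is irreducible, check (i): $f'$ is not a section (if $s f'=1_L$ then $(s\,0)\,(f'\ f'')^t = 1_L$ would make $(f'\ f'')^t$ a section), and not a retraction since $L$ is indecomposable and $M'\neq 0$ (if $f'$ had a right inverse, $L\cong M'\oplus(\ker)$, impossible unless $M'=L$, but then $f'$ is an iso hence a section). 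For (ii), given $f'=g_1 g_2$ with $g_2\colon L\to W$ not a section, use left almost split of $(f'\ f'')^t$ applied to $g_2$ to get $h\colon M'\oplus M''\to W$ with $g_2 = h(f'\ f'')^t$; then $f' = g_1 h (f'\ f'')^t$, and a left-minimality argument on the automorphism-or-not of $g_1 h|_{M'}$ composed appropriately shows $g_1$ is a retraction. For the ``only if'' direction: given $f'\colon L\to M'$ irreducible, first note $M'\neq 0$ since otherwise $f'=0$ would be a retraction (as $L\neq 0$ it cannot be, unless $L=0$) --- more cleanly, $f'$ irreducible forces $M'\neq 0$ because a zero map is either a section (if $L=0$) or fails (ii) trivially; in any case $M'\neq 0$. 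Since $L$ is indecomposable, $A$ is finite-dimensional (Lemma in Section~2) and such a left minimal almost split map $f\colon L\to M$ exists for every indecomposable non-injective $L$ (and for injective $L$ the statement is handled by the analogous $L\to L/\mathrm{soc}$); I would invoke the standard existence theorem for minimal almost split morphisms (Theorem in \cite{ass}, III.3) as known background. Then $f'$ not a section means there is $h\colon M\to M'$ with $f' = h f$; conversely irreducibility of $f'$ applied to the factorization $f' = h f$ with $f$ not a section (as $f$ is not a section) forces $h$ to be a retraction. Writing $M\simeq M'\oplus M''$ via a splitting of $h$, one checks the induced decomposition of $f$ is $(f'\ f'')^t$ up to isomorphism, and minimality of $f$ transfers to this decomposition; this gives the required $f''$.

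The main obstacle is the bookkeeping in the ``only if'' direction: extracting a \emph{direct sum decomposition of the target} $M\simeq M'\oplus M''$ from the mere fact that $h\colon M\to M'$ is a split epimorphism, and then verifying that under this identification the minimal almost split map $f$ becomes exactly $(f'\ f'')^t$ (not merely something equivalent to it), so that left minimality is inherited. This is where one uses that $f$ is \emph{minimal} — without minimality one could only conclude $M'$ is \emph{a} summand of \emph{some} $M$ admitting a left almost split map, and the clean statement would fail. I expect to spend most of the proof's length massaging these splittings and checking that the minimality hypothesis rules out ``spurious'' summands of the target.
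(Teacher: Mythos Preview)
The paper does not actually prove this proposition: immediately after the statement it writes ``For a proof, the reader may refer to page 103 in \cite{ass}'' and moves on. So there is no in-paper argument to compare against; your sketch is essentially the standard proof one finds in Assem--Simson--Skowro\'nski, and the overall strategy (use left almost split to factor, then left minimality to force an automorphism, then split) is the right one.

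Two places in your sketch deserve tightening. In the ``if'' direction, the sentence ``a left-minimality argument on the automorphism-or-not of $g_1 h|_{M'}$ composed appropriately shows $g_1$ is a retraction'' hides the actual step. Writing $h=(h'\ h'')\colon M'\oplus M''\to W$, you have $f'=g_1h'f'+g_1h''f''$; now consider the endomorphism
\[
\varphi=\begin{pmatrix} g_1h' & g_1h''\\ 0 & 1_{M''}\end{pmatrix}
\]
of $M'\oplus M''$. Then $\varphi\,(f'\ f'')^t=(f'\ f'')^t$, so left minimality makes $\varphi$ an automorphism, and the upper-triangular shape forces $g_1h'$ to be an automorphism of $M'$; hence $g_1$ is a retraction. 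Without writing this matrix down, the argument is not complete.

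In the ``only if'' direction, your worry about ``bookkeeping'' is exactly right but also entirely resolvable: once $h\colon M\to M'$ is a retraction with section $s$, set $M''=\ker h$ and identify $M\simeq M'\oplus M''$ via $m\mapsto (h(m),\,m-sh(m))$. Under this identification the first coordinate of $f$ is $hf=f'$ on the nose, and $f'':=(1_M-sh)f$ gives the second; since $(f'\ f'')^t$ is literally $f$ transported along an isomorphism, it inherits left minimal almost split. Your remark about needing existence of some left minimal almost split $f$ starting at $L$ is unnecessary here: the proposition already \emph{assumes} such an $f$ is given in its first sentence, so you may simply use it.
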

For a proof, the reader may refer to page 103 in \cite{ass}. 
Thus, by Propositions \ref{proposition:mal_unique} and \ref{proposition:irr_mal}, all the irreducible morphisms starting at $L$ (ending at $N$) can be obtained by studying the left (resp. right) minimal almost split morphisms from $L$ (resp. to $N$).

We give some characterizations of left and right minimal almost split morphisms. First of all, let us begin with left minimal almost split morphisms starting at indecomposable injectives and right minimal almost split morphisms ending at indecomposable projectives.
\begin{proposition}\label{proposition:mal_pro_inj}
Let $P(a)$ be an indecomposable projective. Then, a morphism $g:M\rightarrow P(a)$ is right minimal almost split if $g$ is a monomorphism and $\image g \simeq \rad P(a)$. Dually, let $I(a)$ be an indecomposable injective. Then, a morphism $f:I(a)\rightarrow M$ is left minimal almost split if $f$ is an epimorphism and $\kernel f\simeq S(a)$.
\end{proposition}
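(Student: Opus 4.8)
The plan is to unwind the two defining conditions---right minimality and the right almost split property---and reduce each to standard structural facts about the indecomposable projective $P(a)$: that $P(a)$ is projective, and that $\rad P(a)$ is its unique maximal subrepresentation, so that every proper subrepresentation of $P(a)$ is contained in $\rad P(a)$ (see III.2 in \cite{ass}). Write $\iota:\rad P(a)\hookrightarrow P(a)$ for the inclusion. From $\image g\simeq \rad P(a)$ and $\rad P(a)\subsetneq P(a)$, a dimension count over $Q_0$ shows $\image g$ is a proper subrepresentation of $P(a)$, hence $\image g\subseteq \rad P(a)$, and equality of dimensions then forces $\image g=\rad P(a)$. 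Since $g$ is a monomorphism, its corestriction $g_0:M\rightarrow \rad P(a)$ is an isomorphism and $g=\iota g_0$.

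Next I would carry out the three verifications. First, $g$ is not a retraction: a retraction is an epimorphism, but $\image g=\rad P(a)\neq P(a)$. Second, the right almost split property: let $g':M'\rightarrow P(a)$ be any morphism that is not a retraction. If $g'$ were an epimorphism, then projectivity of $P(a)$ would lift $1_{P(a)}$ along $g'$, making $g'$ a retraction; hence $g'$ is not surjective, so $\image g'\subsetneq P(a)$ and therefore $\image g'\subseteq \rad P(a)=\image g$. Writing $g'=\iota g'_0$ with $g'_0:M'\rightarrow \rad P(a)$, the morphism $h:=g_0^{-1}g'_0:M'\rightarrow M$ satisfies $gh=\iota g_0g_0^{-1}g'_0=g'$. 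Third, right minimality is immediate: if $\phi:M\rightarrow M$ satisfies $g\phi=g$ then $\phi=1_M$ because $g$ is a monomorphism, and $1_M$ is an automorphism. This shows $g$ is right minimal almost split.

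For the injective statement I would simply dualize using the standard $K$-duality $D=\Hom_K(-,K):\rep(A)\rightarrow \rep(A^{\mathrm{op}})$, an exact contravariant equivalence that exchanges monomorphisms with epimorphisms, injectives with projectives, simples with simples, and left (minimal) almost split morphisms with right (minimal) almost split morphisms, with $D(I(a))\simeq P(a)$ and $D(S(a))\simeq S(a)$ over $A^{\mathrm{op}}$. Given an epimorphism $f:I(a)\rightarrow M$ with $\kernel f\simeq S(a)$, applying $D$ to $0\rightarrow S(a)\rightarrow I(a)\rightarrow M\rightarrow 0$ yields $0\rightarrow D(M)\xrightarrow{D(f)} P(a)\rightarrow S(a)\rightarrow 0$, so $D(f)$ is a monomorphism whose image is the kernel of the canonical epimorphism $P(a)\rightarrow S(a)$, namely $\rad P(a)$. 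The first part then makes $D(f)$ right minimal almost split in $\rep(A^{\mathrm{op}})$, hence $f$ is left minimal almost split in $\rep(A)$.

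None of these steps involves a difficult computation. The point to get right is the structural input: any morphism into $P(a)$ that is not a retraction must fail to be an epimorphism---this is exactly where projectivity of $P(a)$ enters---together with the fact that $\rad P(a)$ absorbs every proper subrepresentation, so that $\image g=\rad P(a)$ is genuinely the largest proper subrepresentation through which such morphisms factor. A minor subtlety worth spelling out is that the hypothesis only gives $\image g\simeq \rad P(a)$ abstractly, and a small dimension argument is needed to promote this to the honest equality $\image g=\rad P(a)$ used in the factorization step.
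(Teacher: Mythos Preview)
Your proof is correct and follows essentially the same approach as the paper's: both reduce to the inclusion $\iota:\rad P(a)\hookrightarrow P(a)$, use that a monomorphism forces right minimality, and derive the right almost split property from projectivity of $P(a)$ together with $\rad P(a)$ being the unique maximal subrepresentation. Your treatment is somewhat more explicit---you spell out the dimension argument promoting $\image g\simeq\rad P(a)$ to $\image g=\rad P(a)$, and you handle the injective case via the $K$-duality $D$ rather than the paper's ``proved in a similar way''---but the underlying argument is the same.
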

\begin{proof}
It suffices to prove the case $g: \rad P(a)\hookrightarrow P(a)$. Since $g$ is a monomorphism, any $h:\rad P(a)\rightarrow \rad P(a)$ with $gh=g$ is an automorphism. Thus, $g$ is right minimal. 
Obviously, $g$ is not a retraction. 
Furthermore, let $g': M'\rightarrow P(a)$ be any morphism that is not a retraction. Then, $g'$ is not an epimorphism, otherwise the projectivity of $P(a)$ implies that $g'$ is a retraction. 
Since $\rad P(a)$ is the unique maximal subrepresentation in $P(a)$, $g'$ factors through $\rad P(a)$. Hence, $g$ is also right almost split. The statement for $I(a)$ is proved in a similar way.
\qed\end{proof}

Next, we give a characterization of left minimal almost split morphisms starting at indecomposable non-injectives and right minimal almost split morphisms ending at indecomposable non-projectives, respectively. But, before that, we introduce a few more concepts. 

A subrepresentation $L$ of $M$ is called superfluous if for every subrepresentation $X$ of $M$ the equality $L+X=M$ implies $X=M$. An epimorphism $h:P\longrightarrow M$ in $\rep(A)$ is called a projective cover of $M$ if $P$ is projective and $\kernel h$ is superfluous in $P$. 
For a representation $M$, let 
\[
\xymatrix{
	P_1\ar[r]^p&P_0\ar[r]^q&M\ar[r] & 0
}
\]
be a minimal projective presentation in $\rep(A)$. 
Namely, it is an exact sequence such that 
$\xymatrix{
	P_1\ar[r]^p&\kernel q
}
$
and 
$\xymatrix{
	P_0\ar[r]^q&M
}
$
are projective covers.

Let $A^{\textrm{op}}$ be the opposite algebra of $A$. Then, by applying the functor $(-)^t=\Hom_A(-,A): \rep(A) \rightarrow \rep(A^{\textrm{op}})$ to the sequence above, we obtain an exact sequence
\[
\xymatrix{
	0\ar[r]&M^t\ar[r]&P_0^t\ar[r]^{p^t}&P_1^t\ar[r] & \trace M\rightarrow 0,
}
\]
where $\trace M=\cokernel p^t \in \rep(A^{\textrm{op}})$ is called the {\em transpose} of $M$. We remark that $\trace M$ is well-defined up to isomorphism, by the uniqueness of minimal projective presentations up to isomorphism. We denote by $D(-)=\Hom_K(-,K)$ the duality functors
\[
\begin{tikzcd}
\rep(A) \rar{D} & \rep(A^{\textrm{op}}) \rar{D} &\rep(A).
\end{tikzcd}
\]
\begin{definition}{\rm 
The Auslander-Reiten translations $\tau,\tau^{-1}:\rep(A)\rightarrow\rep(A)$ are defined by the compositions
\[
	\tau=D \trace ~~~{\rm and}~~~\tau^{-1}=\trace D.
\]
}\end{definition}
We warn that $\tau$ and $\tau^{-1}$, taken as functors from $\rep(A)$ to itself, are not inverses of each other.
For example, it is easy to check that $N$ is projective if and only if $\tau N = 0$. 
To explain the notation, we note that they do induce mutually inverse equivalences between the projectively stable and injectively stable categories of $A$. See Corollary 2.11 in IV.2 of \cite{ass}.
These translations play an important role in determining the Auslander-Reiten quiver of $A$. In Section \ref{sec:ar_an} we illustrate this with the quiver $\Aright_n$.
\begin{definition}\label{definition:ass}{\rm 
A short exact sequence 
\[
\xymatrix{
	0\ar[r]&L\ar[r]^f&M\ar[r]^g&N\ar[r] & 0
}
\]
is called an almost split sequence if $f$ is left minimal almost split and $g$ is right minimal almost split.
}
\end{definition}
We note that an almost split sequence starting from $L$ or ending at $N$ is uniquely determined up to isomorphism by Proposition \ref{proposition:mal_unique}.
The following proposition gives a characterization of left and right minimal almost split morphisms for indecomposable non-injectives and non-projectives.
\begin{proposition}\label{proposition:ass}
For any indecomposable non-projective $N$, there exists an almost split sequence 
\begin{equation}\label{eq:ass_to_nonpro}
\xymatrix{
	0\ar[r]&\tau N\ar[r]&M\ar[r]&N\ar[r] & 0.
}
\end{equation}
Dually, for any indecomposable non-injective $L$, there exists an almost split sequence 
\begin{equation}\label{eq:ass_from_noninj}
\xymatrix{
	0\ar[r]&L\ar[r]&M\ar[r]&\tau^{-1}L\ar[r] & 0.
}
\end{equation}
\end{proposition}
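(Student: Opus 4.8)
This is the Auslander--Reiten theorem on the existence of almost split sequences, and a fully self-contained proof is long; the plan is to reduce it to the \emph{Auslander--Reiten formula} and then produce the desired sequence as a distinguished extension class. It suffices to prove the first statement: the second follows by a standard duality argument, since $D(-)=\Hom_K(-,K)$ carries almost split sequences over $A$ to almost split sequences over the opposite algebra $A^{\mathrm{op}}$ (which is again of the form $KQ'/I'$ with $Q'$ acyclic), exchanges indecomposable non-injectives with indecomposable non-projectives, and intertwines $\tau^{-1}$ over $A$ with $\tau$ over $A^{\mathrm{op}}$; the identification of the end term with $\tau^{-1}L$ is then forced by the uniqueness in Proposition~\ref{proposition:mal_unique}. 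So from now on fix an indecomposable non-projective $N$.

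The technical heart, and the step I expect to be the main obstacle, is the Auslander--Reiten formula: for every $X\in\rep(A)$ there is an isomorphism
\[
\mathrm{Ext}^1_A(N,X)\;\cong\;D\,\overline{\Hom}_A(X,\tau N),
\]
natural in $X$, where $\overline{\Hom}_A$ denotes $\Hom_A$ modulo the morphisms factoring through an injective. To prove it, start from a minimal projective presentation $P_1\xrightarrow{p}P_0\to N\to 0$ and its transpose sequence $0\to N^t\to P_0^t\xrightarrow{p^t}P_1^t\to\trace N\to 0$, where $(-)^t=\Hom_A(-,A)$, apply $\Hom_A(-,X)$ and the Nakayama functor $\nu=D\,(-)^t$, and compare the resulting long exact sequences using that $\nu$ sends projectives to injectives and that $D\,\Hom_A(X,\nu P)\cong\Hom_A(P,X)$ for $P$ projective. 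Carrying this homological bookkeeping through — and likewise over $A^{\mathrm{op}}$ to obtain the companion formula $\mathrm{Ext}^1_A(N,X)\cong D\,\underline{\Hom}_A(\tau^{-1}X,N)$ — is where the real work lies; these computations are done carefully in \cite{ass,ars,ringel}, and for the present paper it would be reasonable simply to cite them.

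Granting the formula, I specialize to $X=\tau N$. Since $N$ is indecomposable and non-projective, $\trace N\neq 0$, hence $\tau N\neq 0$; it is moreover standard that $\tau N$ is then indecomposable and non-injective. Therefore $\End_A(\tau N)$ is a local ring, the two-sided ideal of endomorphisms that factor through an injective is proper (it cannot contain $\mathrm{id}_{\tau N}$, as $\tau N$ is not a summand of an injective), so $\overline{\End}_A(\tau N)$ is a nonzero local algebra. The formula gives
\[
\mathrm{Ext}^1_A(N,\tau N)\;\cong\;D\,\overline{\End}_A(\tau N)\;\neq\;0,
\]
and the socle of this module over $\overline{\End}_A(\tau N)$ — dual to the simple top of $\overline{\End}_A(\tau N)$ — is a nonzero simple submodule. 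Choose a nonzero $\xi$ in it and let
\[
\xymatrix{
0\ar[r] & \tau N\ar[r]^-{f} & M\ar[r]^-{g} & N\ar[r] & 0
}
\]
be the short exact sequence representing $\xi$; it is non-split because $\xi\neq 0$.

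It remains to verify that this sequence is almost split. The criterion I would invoke is: a non-split short exact sequence with indecomposable end terms $L,N$ and class $\xi\in\mathrm{Ext}^1_A(N,L)$ is an almost split sequence if and only if $\mathrm{Ext}^1_A(h,L)(\xi)=0$ for every non-invertible $h\in\End_A(N)$ — equivalently, $\xi$ lies in the socle of $\mathrm{Ext}^1_A(N,L)$ as a right $\End_A(N)$-module — this condition being self-dual and forcing right and left minimality once both end terms are indecomposable. Here $N$ is indecomposable by hypothesis and $\tau N$ is indecomposable as noted; moreover, under the Auslander--Reiten formula the right $\End_A(N)$-action on $\mathrm{Ext}^1_A(N,\tau N)$ corresponds to the $\overline{\End}_A(\tau N)$-action via the isomorphism $\overline{\End}_A(N)\cong\underline{\End}_A(\tau N)$ induced by $\tau$, so the socle over $\End_A(N)$ coincides with the socle over $\overline{\End}_A(\tau N)$ from which $\xi$ was chosen. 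Hence $f$ is left minimal almost split and $g$ is right minimal almost split, which is the assertion. The two obstacles worth flagging are thus the proof of the Auslander--Reiten formula and the bookkeeping identifying the two socles; everything else is a short chase.
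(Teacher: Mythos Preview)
The paper does not supply its own proof of this proposition: immediately after the statement it writes ``For a proof, the reader is referred to page 120 in \cite{ass}.'' So there is nothing to compare against except the cited literature, and your sketch is precisely the standard argument carried out in \cite{ass} (and, in a different packaging, in \cite{ars}): reduce by duality, invoke the Auslander--Reiten formula $\mathrm{Ext}^1_A(N,\tau N)\cong D\,\overline{\End}_A(\tau N)$, pick a socle element, and verify the almost split conditions. In that sense your proposal goes strictly beyond what the paper itself does, and the approach is the one the paper would have you read.

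Two small remarks on the sketch. First, the isomorphism induced by $\tau$ on stable endomorphism rings is $\underline{\End}_A(N)\cong\overline{\End}_A(\tau N)$, not $\overline{\End}_A(N)\cong\underline{\End}_A(\tau N)$ as you wrote; the bars are swapped. Second, your stated criterion for almost split---that $\mathrm{Ext}^1_A(h,L)(\xi)=0$ for every non-invertible $h\in\End_A(N)$---is not by itself the definition of right almost split (which quantifies over all $g':M'\to N$ that are not retractions, for arbitrary $M'$); the reduction to endomorphisms of $N$ uses the naturality of the Auslander--Reiten formula in the first variable together with the locality of $\End_A(N)$, and this step deserves a sentence if you are writing out the argument rather than citing it. You already flag both the AR formula and the socle identification as the places where care is needed, which is accurate.
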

For a proof, the reader is referred to page 120 in \cite{ass}. 
This proposition asserts that the left minimal almost split morphism starting at an indecomposable non-injective $L$ appears in the almost split sequence $\xymatrix{
	0\ar[r]&L\ar[r]&M\ar[r]&\tau^{-1}L\ar[r] & 0
}
$, and its dual statement for indecomposable non-projectives.
We can use these almost split sequences to construct new indecomposables, since $\tau N$ and $\tau^{-1}L$ in (\ref{eq:ass_to_nonpro}) and (\ref{eq:ass_from_noninj}) are indecomposables by Proposition \ref{proposition:al_indecomposable}. 
Furthermore, it follows from Proposition \ref{proposition:irr_mal} that these almost split sequences also produce new irreducible morphisms. 

In this paper, we frequently use 
almost split sequences of the form (\ref{eq:ass_from_noninj}) to construct Auslander-Reiten quivers. For this purpose, we introduce a characterization of the Auslander-Reiten translation $\tau^{-1}$ by using the so-called Nakayama functor $\nu^{-1}(-)=\Hom_A(DA,-) = (D-)^t: \rep(A) \rightarrow \rep(A)$. It is known that $\nu^{-1}$ induces a duality from injective representations to projective representations. Here, we use $\nu^{-1}$ to compute the translation $\tau^{-1}L$ of an indecomposable non-injective $L$.
\begin{proposition}\label{proposition:nakayama}
For a representation $L$, let 
$
\xymatrix{
	0\ar[r]&L\ar[r]^{i_0}&E_0\ar[r]^{i_1}&E_1
}
$ be a minimal injective presentation in $\rep(A)$. Then, there exists an exact sequence
\[
\xymatrix{
	0\ar[r]&\nu^{-1}L\ar[r]^{\nu^{-1}i_0}&\nu^{-1}E_0\ar[r]^{\nu^{-1}i_1}&\nu^{-1}E_1\ar[r]&\tau^{-1}L\ar[r] &0.
}	
\]
\end{proposition}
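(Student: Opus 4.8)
\noindent\emph{Proof strategy.}
The plan is to deduce the sequence from the defining formula $\tau^{-1}=\trace D$ by dualizing the given minimal injective presentation, and then transporting the resulting exact sequence through a natural isomorphism that identifies the functor $\Hom_{A^{\mathrm{op}}}(D(-),A)$ with the Nakayama functor $\nu^{-1}=\Hom_A(DA,-)$. Throughout I would view representations on $A$ as right $A$-modules and use repeatedly that $D=\Hom_K(-,K)$ is an exact duality between $\mathrm{mod}\, A$ and $\mathrm{mod}\, A^{\mathrm{op}}$ with $DD\cong\mathrm{id}$.

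First I would dualize. Applying $D$ to $0\to L\xrightarrow{i_0}E_0\xrightarrow{i_1}E_1$ gives an exact sequence $DE_1\xrightarrow{Di_1}DE_0\xrightarrow{Di_0}DL\to 0$ of left $A$-modules; since $E_0,E_1$ are injective, $DE_0,DE_1$ are projective, so this is a projective presentation of $DL$. It is in fact \emph{minimal}, because a duality carries an injective envelope to a projective cover (equivalently, it exchanges essential monomorphisms with superfluous epimorphisms): thus $Di_0$ is a projective cover of $DL$, and $Di_1$ is a projective cover of $\kernel(Di_0)=D(\cokernel i_0)$. Consequently, by the definition of the transpose and the left exactness of the transpose functor $(-)^t$ for $A^{\mathrm{op}}$-modules, applying $(-)^t$ to this presentation yields an exact sequence
\[
0\to (DL)^t\xrightarrow{(Di_0)^t}(DE_0)^t\xrightarrow{(Di_1)^t}(DE_1)^t\to\trace(DL)\to 0 ,
\]
and $\trace(DL)=\tau^{-1}L$ by definition.

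Next I would identify $(D(-))^t$ with $\nu^{-1}$. For a right $A$-module $X$, applying the duality $D$ together with the canonical isomorphisms $DDA\cong A$ and $DDX\cong X$ produces mutually inverse maps between $\Hom_A(DA,X)$ and $\Hom_{A^{\mathrm{op}}}(DX,A)$, concretely $g\mapsto\bigl(\phi\mapsto\phi\circ g\bigr)$; this gives a natural isomorphism $\theta_X\colon \Hom_A(DA,X)\to\Hom_{A^{\mathrm{op}}}(DX,A)$ of right $A$-modules, natural in $X$, under which $(Df)^t$ corresponds to $\nu^{-1}f$ for every morphism $f$ of right $A$-modules. Applying $\theta$ at $X=L,E_0,E_1$ then carries the exact sequence above to
\[
0\to \nu^{-1}L\xrightarrow{\nu^{-1}i_0}\nu^{-1}E_0\xrightarrow{\nu^{-1}i_1}\nu^{-1}E_1\to\tau^{-1}L\to 0 ,
\]
which is the assertion. (Since $\nu^{-1}=\Hom_A(DA,-)$ is left exact, exactness on the left is automatic, and $\nu^{-1}E_0,\nu^{-1}E_1$ are projective; the real content is the identification $\cokernel(\nu^{-1}i_1)\cong\tau^{-1}L$.)

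The routine reductions aside, the step I expect to be the main obstacle is the verification underlying the natural isomorphism $\theta$: pinning it down precisely is a bookkeeping exercise with the left/right $A$-module structures on $DX$, $DA$, and the various Hom-spaces, and one must also confirm carefully that a duality preserves minimality of presentations. Both are standard facts in the representation theory of Artin algebras (see \cite{ass,ars}), so the argument could alternatively be compressed by citing them, after which the two steps above are purely formal.
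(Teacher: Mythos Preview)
Your proposal is correct and follows essentially the same approach as the paper: apply $D$ to the minimal injective presentation to obtain a minimal projective presentation of $DL$, apply $(-)^t$ to produce the defining four-term exact sequence for $\trace(DL)=\tau^{-1}L$, and then transport everything through the functorial isomorphism $(DX)^t\simeq\nu^{-1}X$. The paper compresses this into a single sentence, while you have unpacked the details (preservation of minimality under $D$, and the explicit description of $\theta$), but the underlying argument is identical.
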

\begin{proof}
The statement is proved by successively applying the functors $D$ and $(-)^t$ and using a functorial isomorphism $(DX)^t\simeq \nu^{-1}X$.
\qed\end{proof}

Based on these propositions, we provide a recipe to construct the Auslander-Reiten quiver $\Gamma(A)$ for $A=\KQ/I$ as follows:
\begin{enumerate}[(i)]
\item Add vertices to $\Gamma(A)$ for all indecomposable projectives $P(a)$, $a\in Q_0$. By Proposition \ref{proposition:simpleprojective}, the vertices for simple projectives do not have incoming arrows in $\Gamma(A)$.
\item For irreducible morphisms starting at indecomposable injectives or ending at indecomposable projectives, apply Proposition \ref{proposition:mal_pro_inj} and then Proposition \ref{proposition:irr_mal}.
\item For an indecomposable non-injective $L$, construct the $\tau^{-1}L$ by using Proposition \ref{proposition:nakayama}, and 
derive the almost split sequence 
\[
\xymatrix{
	0\ar[r]&L\ar[r]^f&M\ar[r]^g&\tau^{-1}L\ar[r] & 0.
}
\]
Then, add the vertex $[\tau^{-1}L]$ to $\Gamma(A)$, since, by Proposition \ref{proposition:al_indecomposable},  $\tau^{-1}L$ is an indecomposable. 
Furthermore, add newly obtained vertices $[M_i]$ and arrows $[L]\rightarrow [M_i]$ and $[M_i]\rightarrow [\tau^{-1}L]$ in $f=(f_1\dots f_s)^t: L\rightarrow M=\bigoplus_{i=1}^sM_i$ and $g=(g_1\dots g_s): M=\bigoplus_{i=1}^sM_i\rightarrow \tau^{-1}L$, respectively, where $M=\bigoplus_{i=1}^sM_i$ is an indecomposable decomposition of $M$.
By Proposition \ref{proposition:irr_mal}, we can obtain all the irreducible morphisms starting at $L$ and ending at $\tau^{-1}L$ in this manner. 
\item Once we obtain a finite connected component $\Gamma$, this is the Auslander-Reiten quiver $\Gamma(A)=\Gamma$ by Proposition \ref{proposition:finitecc}. 
\end{enumerate}

Part (iii) is to be done inductively.
By Proposition \ref{proposition:simpleprojective}, the inductive application of (iii) stops when we obtain simple injectives. While doing this inductive process, if we get an indecomposable injective $L$ for example, then we cannot use (iii) to compute the irreducible morphisms starting from $L$. In such a case, we use part (ii).
It should also be mentioned that since we capture all the irreducible morphisms using (ii) and (iii), we can completely derive the connected component if $\Gamma(A)$ is finite.
We warn that the above recipe does not terminate in the representation-infinite case.

\subsection{Auslander-Reiten Quiver of $\Aright_n$}\label{sec:ar_an}
Before proving Theorem \ref{theorem:cl}, let us consider the Auslander-Reiten quiver of persistence modules on $\Aright_n$ 
and study how to use the recipe presented in the previous subsection. Let us denote the path algebra of $\Aright_n$ by $A$.

\begin{theorem}\label{theorem:ar_an}
Let $\Gamma=(\Gamma_0,\Gamma_1)$ be the Auslander-Reiten quiver of $\Aright_n$.
Then, the set $\Gamma_0$ of vertices consists of the isomorphism classes of interval representations $\I[b,d]$ for $1\leq b\leq d\leq n$, and the set $\Gamma_1$ of arrows consists of the irreducible morphisms
\[
    \begin{tikzcd}
        \I[b-1,d] & \I[b,d] \lar\dar \\
                 & \I[b,d-1]
    \end{tikzcd} 
\]
for $1<d\leq n$, where $1<b\leq d$ for horizontal arrows and $1\leq b <d$ for vertical arrows.
Figure \ref{fig:ar_an} shows $\Gamma$ sketched in $\N^2$.
\begin{figure}[h!]
\begin{center}
\includegraphics[width=6cm]{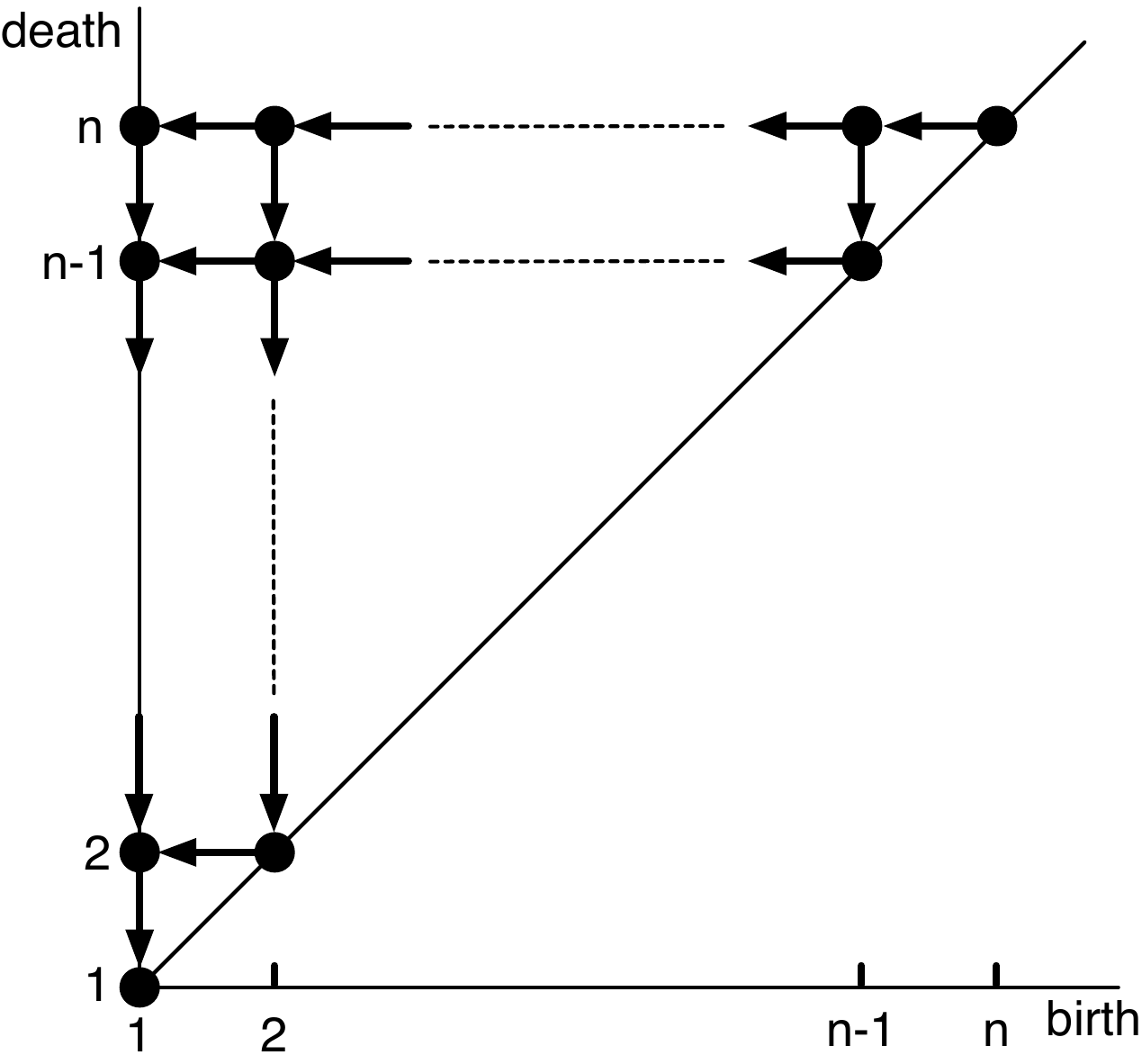}
\caption{The Auslander-Reiten quiver of $\Aright_n$ and the grid points of a persistence diagram.}
\label{fig:ar_an}
\end{center}
\end{figure}
\end{theorem}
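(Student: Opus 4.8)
The plan is to run the recipe of Section~\ref{sec:recipe} for the hereditary algebra $A=K\Aright_n$, where $\Aright_n$ is the equioriented quiver $1\to 2\to\cdots\to n$. First I would pin down the building blocks: since there is a (unique) path from $a$ to $b$ precisely when $a\le b$, the indecomposable projective is $P(a)=\I[a,n]$ with identity maps, and dually the indecomposable injective is $I(a)=\I[1,a]$; in particular $P(n)=\I[n,n]=S(n)$ is simple projective and $I(1)=\I[1,1]=S(1)$ is simple injective. By Theorem~\ref{theorem:gabriel} together with Theorem~\ref{theorem:krull} the indecomposables are exactly the $\I[b,d]$, $1\le b\le d\le n$, so $\Gamma_0$ is as claimed and the work is to determine $\Gamma_1$, i.e.\ the irreducible morphisms. (Alternatively one may skip Gabriel entirely and let the recipe enumerate the indecomposables as it goes, the process being forced to terminate at the simple injective by Proposition~\ref{proposition:simpleprojective}.)

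Next I would execute steps (ii) and (iii) of the recipe. For step (ii): by Proposition~\ref{proposition:mal_pro_inj} the right minimal almost split morphism ending at the non-simple projective $P(a)=\I[a,n]$ is the inclusion $\rad P(a)\hookrightarrow P(a)$, and since $\rad P(a)$ is spanned by the non-stationary paths out of $a$ we have $\rad P(a)=P(a+1)=\I[a+1,n]$; this produces the irreducible morphisms $\I[a+1,n]\to\I[a,n]$ (the top row $d=n$ in Figure~\ref{fig:ar_an}), and dually the left minimal almost split morphism out of $I(a)=\I[1,a]$ is the surjection $\I[1,a]\twoheadrightarrow\I[1,a-1]$, producing $\I[1,a]\to\I[1,a-1]$ (the left column $b=1$). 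For step (iii), fix a non-injective $L=\I[b,d]$, i.e.\ $b\ge 2$. Its socle is $S(d)$ (the unique vertex of $[b,d]$ with vanishing outgoing map), so its injective envelope is $I(d)=\I[1,d]$, and $\mathrm{coker}(\I[b,d]\hookrightarrow\I[1,d])=\I[1,b-1]=I(b-1)$ is already injective; hence the minimal injective presentation is $0\to\I[b,d]\to I(d)\xrightarrow{i_1}I(b-1)$ with $i_1$ onto. Applying the Nakayama functor $\nu^{-1}$, which carries $I(a)$ to $P(a)=\I[a,n]$, Proposition~\ref{proposition:nakayama} gives an exact sequence $0\to\nu^{-1}L\to\I[d,n]\xrightarrow{\nu^{-1}i_1}\I[b-1,n]\to\tau^{-1}L\to 0$; since $b-1<d$, the map $\nu^{-1}i_1$ is a nonzero scalar times the inclusion $\I[d,n]\hookrightarrow\I[b-1,n]$, a monomorphism with cokernel $\I[b-1,d-1]$. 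Therefore $\tau^{-1}\I[b,d]=\I[b-1,d-1]$.

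It then remains to identify the middle term $M$ of the almost split sequence $0\to\I[b,d]\to M\to\I[b-1,d-1]\to 0$ from (\ref{eq:ass_from_noninj}). Comparing dimension vectors, with the convention $\I[x,y]=0$ for $x>y$, one finds $\dim M=\dim\I[b,d]+\dim\I[b-1,d-1]=\dim\bigl(\I[b-1,d]\oplus\I[b,d-1]\bigr)$, and a direct check shows that
\[
0\to\I[b,d]\xrightarrow{(\iota~\pi)^t}\I[b-1,d]\oplus\I[b,d-1]\xrightarrow{(\pi'~{-\iota'})}\I[b-1,d-1]\to 0
\]
is a short exact sequence, where $\iota,\iota'$ are the evident inclusions and $\pi,\pi'$ the evident surjections of interval representations; it is non-split because $\I[b-1,d]\oplus\I[b,d-1]\not\simeq\I[b,d]\oplus\I[b-1,d-1]$ by Krull--Remak--Schmidt. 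A length-one projective resolution of $\I[b-1,d-1]$ (namely $0\to P(d)\to P(b-1)\to\I[b-1,d-1]\to 0$) together with $\Hom_A(P(b-1),\I[b,d])=0$ shows $\dim_K{\rm Ext}^1_A(\I[b-1,d-1],\I[b,d])=1$, so this non-split sequence is the almost split sequence. Hence the irreducible morphisms produced here are exactly $\I[b,d]\to\I[b-1,d]$, $\I[b,d]\to\I[b,d-1]$, $\I[b-1,d]\to\I[b-1,d-1]$ and $\I[b,d-1]\to\I[b-1,d-1]$ (discarding any with zero source or target). Since $\I[b,d]=\tau^{-1}\I[b+1,d+1]$ whenever $d<n$ and the $\I[b,n]$ are the projectives, running step (iii) inductively (downward in $n-d$) reaches every $\I[b,d]$ and terminates once $\tau^{-1}$ becomes undefined, i.e.\ at the injectives $\I[1,d]$; the resulting quiver is finite and connected, so by Proposition~\ref{proposition:finitecc} it is all of $\Gamma(A)$, and reading off its arrows gives precisely the two stated families, with the stated ranges (a horizontal arrow $\I[b,d]\to\I[b-1,d]$ needs $b>1$, a vertical arrow $\I[b,d]\to\I[b,d-1]$ needs $d>b$). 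The main obstacle is exactly this identification of the middle terms $M$ — ruling out any spurious irreducible morphisms — which the one-dimensionality of ${\rm Ext}^1$ (equivalently, a careful direct verification that the displayed mesh is almost split) handles; everything else is bookkeeping.
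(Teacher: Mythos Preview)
Your proof is correct and follows the same overall strategy as the paper: both run the recipe of Section~\ref{sec:recipe}, compute $P(a)=\I[a,n]$, $I(a)=\I[1,a]$, use Proposition~\ref{proposition:mal_pro_inj} for the projective/injective boundary, derive $\tau^{-1}\I[b,d]=\I[b-1,d-1]$ from the minimal injective presentation via the Nakayama functor, and conclude with Proposition~\ref{proposition:finitecc}.

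The one genuine difference is how the middle term of each almost split sequence is identified. The paper argues inductively: having already constructed the previous row, it knows in advance that $\I[i,j]\to\I[i-1,j]$ and $\I[i,j]\to\I[i,j-1]$ are irreducible, so by Proposition~\ref{proposition:irr_mal} these must appear as summands of $M$; a dimension count against $\dim\I[i,j]+\dim\tau^{-1}\I[i,j]$ then shows there is no further summand. You instead write down the candidate mesh explicitly, check it is non-split, and pin it down as \emph{the} almost split sequence by computing $\dim_K\mathrm{Ext}^1_A(\I[b-1,d-1],\I[b,d])=1$ from the length-one projective resolution. Your route is a bit more self-contained (each mesh is verified independently, without relying on the order in which rows are produced), at the cost of invoking an $\mathrm{Ext}^1$ computation that lies outside the bare recipe; the paper's route stays entirely within the toolkit of Section~\ref{sec:recipe} but leans on the inductive bookkeeping to know which irreducible morphisms are already in hand. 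Either way the arrows produced are exactly the two claimed families.
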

\begin{proof}
The indecomposable projectives and injectives of this quiver are 
\[
	P(k)=\I[k,n]~~{\rm and}~~I(k)=\I[1,k]
\]
for $k=1,\dots,n$. From (i) in the recipe we start to construct the Auslander-Reiten quiver by including the indecomposable projectives $P(k)$. Since $P(n)$ is simple projective, there are no irreducible morphisms ending at $P(n)$. We also note $\rad(P(k))=P(k+1)$. This implies from (ii) in the recipe that $P(k+1)\rightarrow P(k)$ is the right minimal almost split morphism at each $k=1,\dots,n-1$. Hence, we obtain a sequence of irreducible  morphisms
\[
	\xymatrix{
		\I[1,n]&\I[2,n]\ar[l]&\ar[l]\dots\dots&\ar[l]\I[n-1,n]&\ar[l]\I[n,n].		
	}
\]

Our proof is by induction on the rows of Figure \ref{fig:ar_an}, and for every row another induction on the columns. 
Let us suppose that there exists a sequence of irreducible morphisms
\[
	\xymatrix{
		\I[1,j]&\I[2,j]\ar[l]&\ar[l]\dots\dots&\ar[l]\I[j-1,j]&\ar[l]\I[j,j]
	}
\]
among the indecomposables $\I[1,j],\dots,\I[j,j]$ for some $j=2,\dots,n$, and execute (iii) in the recipe. We first construct the almost split sequence
\[
	\xymatrix{
		0\ar[r]&\I[j,j]\ar[r]&\I[j-1,j]\oplus M\ar[r]&\tau^{-1}\I[j,j]\ar[r]&0
	}
\]
starting at the indecomposable non-injective $\I[j,j]$ with some possible direct summand $M$. For this purpose, let us study $\tau^{-1}\I[j,j]$ by Proposition \ref{proposition:nakayama}. 

A minimal injective presentation of $\I[j,j]$ is given by
\[
	\xymatrix{
		0\ar[r]&\I[j,j]\ar[r]&I(j)\ar[r]&I(j-1).
	}
\]
By applying the Nakayama functor $\nu^{-1}$ and noting that $\nu^{-1}I(k)=P(k)$ for $k=1,\dots,n$, we obtain the exact sequence
\[
	\xymatrix{
		0\ar[r]&\nu^{-1}\I[j,j]\ar[r]&P(j)\ar[r]&P(j-1)\ar[r]&\tau^{-1}\I[j,j]\ar[r]&0.
	}
\]
Since $\nu^{-1}L\simeq\Hom_A(I(1),L)\oplus\dots\oplus \Hom_A(I(n),L)$ for any $L\in\rep(A)$,
we have  $\nu^{-1}\I[j,j]=0$. Then, we compute
\[
	\dim\tau^{-1}\I[j,j]=\dim P(j-1)-\dim P(j)=\dim \I[j-1,j-1],
\]
and hence $\tau^{-1}\I[j,j]= \I[j-1,j-1]$.
Furthermore, $M=0$, since $\dim M=\dim \I[j,j]+\dim \I[j-1,j-1]-\dim \I[j-1,j]=0$.
Hence, the almost split sequence starting at $\I[j,j]$ is given by
\[
	\xymatrix{
		0\ar[r]&\I[j,j]\ar[r]&\I[j-1,j]\ar[r]&\I[j-1,j-1]\ar[r]&0
	}
\]
and we obtain the new indecomposable $\I[j-1,j-1]$ and 
the new irreducible morphism $\I[j-1,j]\rightarrow \I[j-1,j-1]$.

Next, let us similarly construct the almost split sequence starting at the indecomposable non-injective $\I[i,j]$ with $i=2,\dots,j-1$.
By induction on $i$, we have irreducible morphisms $\I[i,j]\rightarrow \I[i-1,j]$ and $\I[i,j]\rightarrow \I[i,j-1]$, leading to
\[
	\xymatrix{
		0\ar[r]&\I[i,j]\ar[r]&\I[i-1,j]\oplus \I[i,j-1]\oplus M\ar[r]&\tau^{-1}\I[i,j]\ar[r]&0
	}
\]
as the almost split sequence.
A minimal injective presentation 
\[
	\xymatrix{
		0\ar[r]&\I[i,j]\ar[r]&I(j)\ar[r]&I(i-1)
	}
\]
leads to an exact sequence
\[
	\xymatrix{
		0\ar[r]&\nu^{-1}\I[i,j]\ar[r]&P(j)\ar[r]&P(i-1)\ar[r]&\tau^{-1}\I[i,j]\ar[r]&0.
	}
\]
From this exact sequence, we obtain $\tau^{-1}\I[i,j]=\I[i-1,j-1]$, $M=0$, and the almost split sequence
\[
	\xymatrix{
		0\ar[r]&\I[i,j]\ar[r]&\I[i-1,j]\oplus \I[i,j-1]\ar[r]&\I[i-1,j-1]\ar[r]&0
	}
\]
in a similar way. For the left minimal almost split morphism starting at the indecomposable injective $\I[1,j]$, 
we note that the irreducible morphism $\I[1,j]\rightarrow \I[1,j-1]$ obtained in the previous step at $i=2$ is an epimorphism with the kernel $S(j)$. Hence, it follows from (ii) in the recipe that there are no irreducible morphisms starting at $\I[1,j]$ to other indecomposables.

By induction on $j$, we obtain the isomorphism classes of  indecomposables $\I[i,j]$ for $1\leq i\leq j\leq n$. 
Since $\I[1,1]$ is simple injective, there are no irreducible morphisms starting at $\I[1,1]$, and these $n(n+1)/2$ indecomposables and their irreducible morphisms form a connected component. This completes the proof according to (iv) in the recipe.
\qed\end{proof}
%
%
\begin{remark}\label{remark:ar_an}
{\rm 
%
The vertex located at $(b,d)\in \N^2, 1\leq b\leq d\leq n,$ corresponds to the isomorphism class of the interval $\I[b,d]$. 
Hence, $\Gamma_0$ of the Auslander-Reiten quiver is the same as the grid points 
${\{(b,d)\in \N^2\mid 1\leq b\leq d\leq n\}}$ of standard persistence diagrams on $\Aright_n$. This relationship between persistence diagrams and the Auslander-Reiten quiver is generalized in Section \ref{sec:generalization_pd}.
}
\end{remark}

\subsection{Proofs of Theorems \ref{theorem:cl} and \ref{theorem:cl_infinite}}\label{sec:proof}
\noindent{{\it Proof of Theorem \ref{theorem:cl}}}:
We give a proof of the theorem for the commutative ladder $\CL(fb)$:
\[
\includegraphics{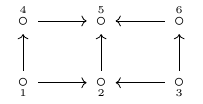}\raisebox{+20pt}{.}
\]
The proofs for the other $\CL(\tau_n)$ with $n\leq 4$ are similar.
In this proof, we successively construct indecomposables and irreducible morphisms starting from the left of the Auslander-Reiten quiver $\Gamma(\CL(fb))$ and ending on the right. 
For this purpose, we label the columns from 1 to 12 as shown in Figure \ref{fig:AR_fb_column}
\begin{figure}[h]
\begin{center}
\includegraphics[width=14cm]{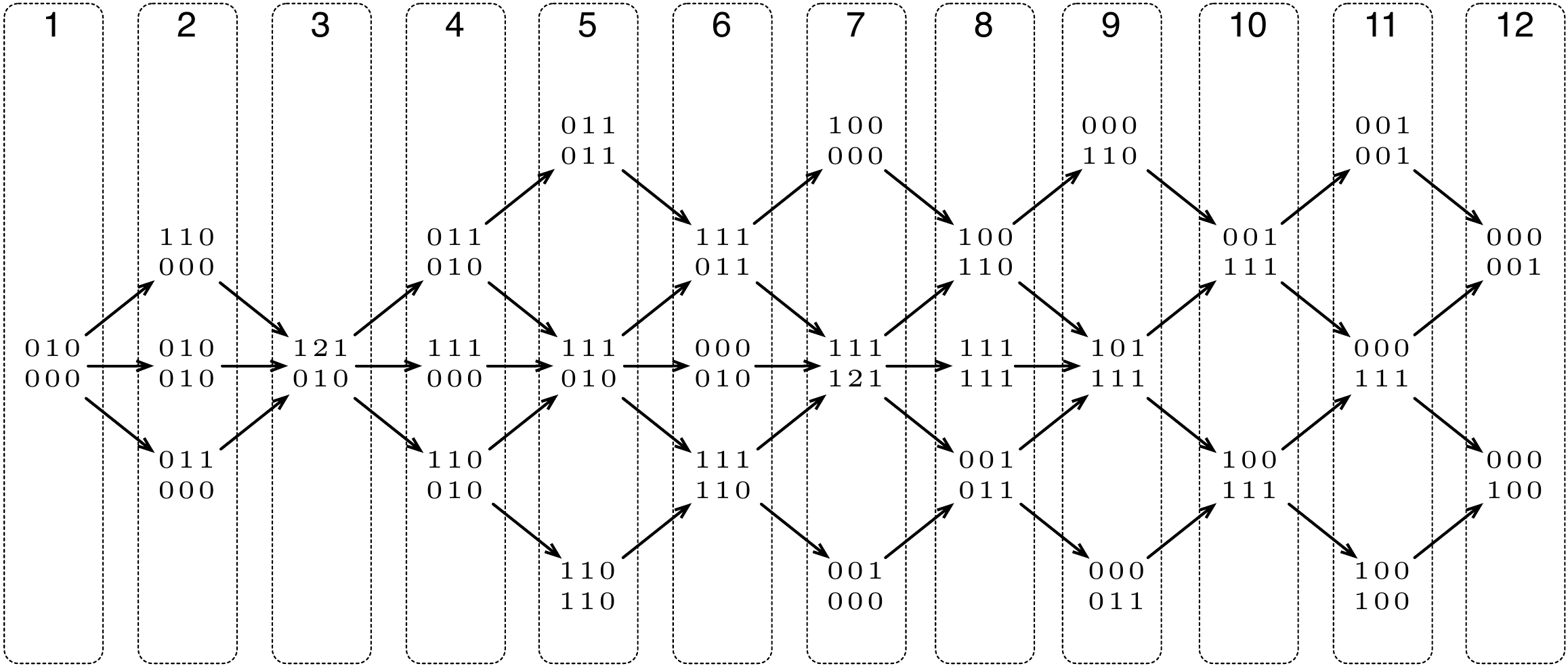}
\caption{The Auslander-Reiten quiver of $\CL(fb)$ with columns labeled from $1$ to $12$.}
\label{fig:AR_fb_column}
\end{center}
\end{figure}

For the sake of notational simplicity, we denote each indecomposable by its dimension vector without using the symbol $``\dim"$. In addition, we take all linear maps $K\rightarrow K$ as the identity map. 

The indecomposable projectives and injectives are
\[
\begin{array}{cccccc}
P(1)=\smdimVec{1,1,0,1,1,0}, 
&
P(2)=\smdimVec{0,1,0,0,1,0}, 
&
P(3)=\smdimVec{0,1,1,0,1,1}, 
&
P(4)=\smdimVec{0,0,0,1,1,0}, 
&
P(5)=\smdimVec{0,0,0,0,1,0}, 
&
P(6)=\smdimVec{0,0,0,0,1,1}, 
\\
I(1)=\smdimVec{1,}, 
&
I(2)=\smdimVec{1,1,1,},
&
I(3)=\smdimVec{0,0,1,}, 
&
I(4)=\smdimVec{1,0,0,1,0,0}, 
&
I(5)=\smdimVec{1,1,1,1,1,1}, 
&
I(6)=\smdimVec{0,0,1,0,0,1}.
\end{array}
\]
We start to construct $\Gamma(\CL(fb))$ from the simple projective $P(5)$. It follows from $\rad P(2)=\rad P(4) = \rad P(6) = P(5)$ that the almost split sequence starting at $P(5)$ takes the form
\begin{equation}\label{eq:ass_starting_at_p5}
	\xymatrix{
		0\ar[r] & P(5) \ar[r] & P(2)\oplus P(4) \oplus P(6)\oplus M \ar[r] & N \ar[r] & 0
	}
\end{equation}
with some possible direct summand $M$, where $N=\tau^{-1}P(5)$. A minimal injective presentation 
\[
	\xymatrix{
		0\ar[r] & P(5) \ar[r] & I(5) \ar[r]^-{\varphi} & I(2)\oplus I(4)\oplus I(6)
	}
\]
leads to an exact sequence
\begin{equation}\label{eq:p5mpp}
	\xymatrix{
		0\ar[r] &\nu^{-1}P(5)\ar[r] & P(5)\ar[r]^-{\nu^{-1}\varphi} & P(2)\oplus P(4)\oplus P(6)\ar[r] &N\ar[r] & 0.
	}
\end{equation}
It follows from $\nu^{-1}P(5)\simeq\Hom_A(I(1),P(5))\oplus \dots\oplus\Hom_A(I(6),P(5)) = 0$ that we obtain the vector spaces of $N$ 
as
\[
\xymatrix{
K \ar[r]^-{f_{45}}&K^2&\ar[l]_-{f_{65}}K\\
0 \ar[r]\ar[u]&K\ar[u]^-{f_{25}} &\ar[l]0\ar[u]
}
\]
by counting dimensions in (\ref{eq:p5mpp}). In order to determine the linear maps $f_{25}, f_{45}$,  and $f_{65}$, we need to study the cokernel of $\nu^{-1}\varphi$. 
Note that, at the vertex 5 in $\CL(fb)$, we have
\[
	P(5)e_5\simeq\Hom_A(DA,I(5))e_5\simeq\Hom_A(I(5),I(5))
\]
and similarly
\[
	(P(2)\oplus P(4)\oplus P(6))e_5\simeq\Hom_A(I(5),I(2))\oplus\Hom_A(I(5),I(4))\oplus\Hom_A(I(5),I(6)).
\]
Then, by appropriately choosing bases, we can obtain a matrix form of the linear map
\[
P(5)e_5\simeq K\ni x\longmapsto (-x~x~x)^t\in
K^3\simeq (P(2)\oplus P(4)\oplus P(6))e_5.
\]
This leads to the linear maps
\[
f_{25}=\left[\begin{array}{c}1 \\1\end{array}\right],~
f_{45}=\left[\begin{array}{c}1\\0\end{array}\right],~
f_{65}=\left[\begin{array}{c}0\\1\end{array}\right]
\]
in $N=\cokernel \nu^{-1}\varphi$.
Furthermore, we also obtain  $M=0$ by counting dimensions in (\ref{eq:ass_starting_at_p5}), and hence the almost split sequence starting from $P(5)$ is determined. This gives a portion of the Auslander-Reiten quiver up to the third column.
It also follows from Proposition \ref{proposition:mal_pro_inj} that we do not have other irreducible morphisms ending at $P(2), P(4)$, or $P(6)$.

In a similar way, we can construct the almost split sequences 
\begin{eqnarray*}
&& 
0\rightarrow
\smdimVec{0,0,0,1,1,0} \rightarrow
\smdimVec{0,1,0,1,2,1} \rightarrow 
\smdimVec{0,1,0,0,1,1} \rightarrow 
0,
\\
&&
0\rightarrow
\smdimVec{0,1,0,0,1,0} \rightarrow
\smdimVec{0,1,0,1,2,1} \rightarrow 
\smdimVec{0,0,0,1,1,1} \rightarrow 
0,
\\
&&
0 \rightarrow
\smdimVec{0,0,0,0,1,1} \rightarrow
\smdimVec{0,1,0,1,2,1} \rightarrow 
\smdimVec{0,1,0,1,1,0} \rightarrow 
0
\end{eqnarray*}
starting from the 2nd column, and 
\[
0 \rightarrow
\smdimVec{0,1,0,1,2,1} \rightarrow
\smdimVec{0,1,0,0,1,1} \oplus
\smdimVec{0,0,0,1,1,1} \oplus
\smdimVec{0,1,0,1,1,0} \rightarrow 
\smdimVec{0,1,0,1,1,1} \rightarrow 
0
\]
starting from the 3rd column.

At the 4th column, by noting 
$\rad P(3)=\smdimVec{0,1,0,0,1,1}$ and 
$\rad P(1)=\smdimVec{0,1,0,1,1,0}$ and by adding $P(3)$ and $P(1)$ in the middle terms of the sequences, we obtain the almost split sequences
\begin{eqnarray*}
&& 
0 \rightarrow
\smdimVec{0,1,0,0,1,1} \rightarrow
\smdimVec{0,1,1,0,1,1} \oplus
\smdimVec{0,1,0,1,1,1} \rightarrow 
\smdimVec{0,1,1,1,1,1} \rightarrow 
0,
\\
&&
0 \rightarrow
\smdimVec{0,0,0,1,1,1} \rightarrow
\smdimVec{0,1,0,1,1,1} \rightarrow 
\smdimVec{0,1,0,0,0,0} \rightarrow 
0,\\
&&
0 \rightarrow
\smdimVec{0,1,0,1,1,0} \rightarrow
\smdimVec{0,1,0,1,1,1} \oplus
\smdimVec{1,1,0,1,1,0} \rightarrow 
\smdimVec{1,1,0,1,1,1} \rightarrow 
0
\end{eqnarray*}
in a similar way.

The almost split sequences starting at the 5th, 6th, and 7th columns are similarly and inductively derived as follows:
\begin{flushleft}
$\bullet$ 5th column: 
\end{flushleft}
\begin{eqnarray*}
&& 
0 \rightarrow
\smdimVec{0,1,1,0,1,1} \rightarrow
\smdimVec{0,1,1,1,1,1} \rightarrow 
\smdimVec{0,0,0,1,0,0} \rightarrow 0,
\\
&&
0 \rightarrow
\smdimVec{0,1,0,1,1,1} \rightarrow
\smdimVec{0,1,1,1,1,1} \oplus
\smdimVec{0,1,0,} \oplus
\smdimVec{1,1,0,1,1,1} \rightarrow 
\smdimVec{1,2,1,1,1,1} \rightarrow 0,\\
&&
0 \rightarrow
\smdimVec{1,1,0,1,1,0} \rightarrow
\smdimVec{1,1,0,1,1,1} \rightarrow 
\smdimVec{0,0,0,0,0,1} \rightarrow 0.
\end{eqnarray*}

\begin{flushleft}
$\bullet$ 6th column
\end{flushleft}
\begin{eqnarray*}
&& 
0 \rightarrow
\smdimVec{0,1,1,1,1,1} \rightarrow
\smdimVec{0,0,0,1,} \oplus
\smdimVec{1,2,1,1,1,1} \rightarrow 
\smdimVec{1,1,0,1,0,0,} \rightarrow 0,
\\
&&0\rightarrow
\smdimVec{0,1,0} \rightarrow
\smdimVec{1,2,1,1,1,1} \rightarrow 
\smdimVec{1,1,1,1,1,1} \rightarrow 0,\\
&&
0 \rightarrow
\smdimVec{1,1,0,1,1,1} \rightarrow
\smdimVec{1,2,1,1,1,1} \oplus
\smdimVec{0,0,0,0,0,1} \rightarrow 
\smdimVec{0,1,1,0,0,1} \rightarrow 0.
\end{eqnarray*}
\begin{flushleft}
$\bullet$ 7th column
\end{flushleft}
\begin{eqnarray*}
&& 
0 \rightarrow
\smdimVec{0,0,0,1,0,0} \rightarrow
\smdimVec{1,1,0,1,0,0}\rightarrow 
\smdimVec{1,1,0} \rightarrow 0,
\\
&&0\rightarrow
\smdimVec{1,2,1,1,1,1} \rightarrow
\smdimVec{1,1,0,1,} \oplus
\smdimVec{1,1,1,1,1,1} \oplus
\smdimVec{0,1,1,0,0,1} \rightarrow 
\smdimVec{1,1,1,1,0,1} \rightarrow 0,\\
&&
0 \rightarrow
\smdimVec{0,0,0,0,0,1} \rightarrow
\smdimVec{0,1,1,0,0,1} \rightarrow 
\smdimVec{0,1,1,0,0,0} \rightarrow 0.
\end{eqnarray*}
In the part of the 5th column where the indecomposable 
$\smdimVec{1,2,1,1,1,1}$ appears, the linear maps in it can be obtained by studying the cokernel, as was done for
$\smdimVec{0,1,0,1,2,1}$.

At the 8th column, we note that 
$I(5)=\smdimVec{1,1,1,1,1,1}$ 
is indecomposable injective and 
$\smdimVec{1,1,1,1,1,1}
\rightarrow 
\smdimVec{1,1,1,1,0,1}$ is an epimorphism whose kernel is $S(5)$. Hence, this morphism is the left minimal almost split and there are no other irreducible morphisms staring at $I(5)$ by Proposition \ref{proposition:mal_pro_inj}.
Then, we have the almost split sequences
\begin{eqnarray*}
&& 
0\rightarrow
\smdimVec{1,1,0,1,0,0} \rightarrow
\smdimVec{1,1,} \oplus
\smdimVec{1,1,1,1,0,1} \rightarrow 
\smdimVec{1,1,1,0,0,1} \rightarrow 0,
\\
&& 
0\rightarrow
\smdimVec{0,1,1,0,0,1} \rightarrow
\smdimVec{1,1,1,1,0,1} \oplus
\smdimVec{0,1,1,} \rightarrow 
\smdimVec{1,1,1,1,0,0} \rightarrow 0
\end{eqnarray*}
at this column.

At the 9th column, we obtain three almost split sequences
\begin{eqnarray*}
&&0\rightarrow
\smdimVec{1,1,} \rightarrow
\smdimVec{1,1,1,0,0,1} \rightarrow 
\smdimVec{0,0,1,0,0,1} \rightarrow 0,\\
&&0\rightarrow
\smdimVec{1,1,1,1,0,1} \rightarrow
\smdimVec{1,1,1,0,0,1} \oplus
\smdimVec{1,1,1,1,} \rightarrow 
\smdimVec{1,1,1,} \rightarrow 0,\\
&&0\rightarrow
\smdimVec{0,1,1,} \rightarrow
\smdimVec{1,1,1,1,} \rightarrow 
\smdimVec{1,0,0,1} \rightarrow 0.
\end{eqnarray*}
It should be remarked that we have a nonzero image of the Nakayama functor 
$\nu^{-1}\left(\smdimVec{1,1,1,1,0,1}\right)
=\smdimVec{0,0,0,0,1,0}$
in the construction of the almost split sequence starting at 
$\smdimVec{1,1,1,1,0,1}$.

At the 10th column, by again noting nonzero images of the Nakayama functor
$\nu^{-1}\left(\smdimVec{1,1,1,0,0,1}\right)=
\smdimVec{0,0,0,0,1,0}$ and 
$\nu^{-1}\left(\smdimVec{1,1,1,1}\right)=
\smdimVec{0,0,0,0,1,0}$, 
we obtain the almost split sequences
\begin{eqnarray*}
&&0\rightarrow
\smdimVec{1,1,1,0,0,1} \rightarrow
\smdimVec{0,0,1,0,0,1} \oplus
\smdimVec{1,1,1,0,0,0} \rightarrow 
\smdimVec{0,0,1} \rightarrow 0,\\
&&
0\rightarrow
\smdimVec{1,1,1,1} \rightarrow
\smdimVec{1,1,1,} \oplus
\smdimVec{1,0,0,1} \rightarrow 
\smdimVec{1,} \rightarrow 0.
\end{eqnarray*}

At the 11th column, it follows from Proposition \ref{proposition:mal_pro_inj} that
\begin{eqnarray*}
\smdimVec{0,0,1,0,0,1} \rightarrow
\smdimVec{0,0,1,0,0,0},~~~~~~
\smdimVec{1,1,1} \rightarrow
\smdimVec{0,0,1} \oplus
\smdimVec{1,},~~~~~~
\smdimVec{1,0,0,1,0,0} \rightarrow
\smdimVec{1,} 
\end{eqnarray*}
are the left minimal almost split morphisms, and hence there are no other irreducible morphisms starting at the 11th column. Since 
$\smdimVec{0,0,1}$ and 
$\smdimVec{1,}$ are simple injectives, there are no arrows starting at these injectives by Proposition \ref{proposition:simpleprojective}. 
Hence we obtain a finite connected component, and Proposition \ref{proposition:finitecc} guarantees that this is the Auslander-Reiten quiver of $\CL(fb)$. This completes the proof for $\CL(fb)$. 

The Auslander-Reiten quivers for the other commutative ladders in Theorem \ref{theorem:cl} are similarly constructed with slight modifications. We omit their proofs. 
\qed \vspace{0.5cm}

\noindent{{\it Proof of Theorem \ref{theorem:cl_infinite}}}:
Suppose that $O_i=f$ for $i=1,\dots,n-1$ in the orientation $\tau_n=O_1\dots O_{n-1}$. 
We first claim that the commutative ladder $\CL(\tau_5)$:
\[
\includegraphics{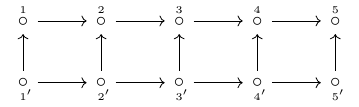}
\]
is representation-infinite.  
A direct proof is to apply the same recipe above and to observe that the Auslander-Reiten quiver $\Gamma(\CL(\tau_5))$ contains isomorphism classes of indecomposables whose dimension vectors have elements greater than 6. This implies that the commutative ladder $\CL(\tau_5)$  is representation-infinite by a theorem of Ovsienko (see \cite{ringel}). 

Another proof for this claim, which fits the induction below better, is to study a quotient algebra of $\CL(\tau_5)$ and use the Happel-Vossieck list (see A.2 in \cite{ringel}) in which some representation-infinite algebras are shown. Let $I=\langle e_{1'}, e_{5}\rangle$ be the ideal in $\CL(\tau_5)$ generated by the idempotents $e_{1'}$ and $e_{5}$. Then, we can check that the quotient algebra $\CL(\tau_5)/I$ appears in the list under $\tilde{\E}_7$. Then, the claim follows from the embedding 
\[
	\rep(\CL(\tau_5)/I)\hookrightarrow \rep(\CL(\tau_5)),
\]
since the left hand side is representation-infinite.

Suppose that $\CL(\tau_{n-1})$ is representation-infinite and consider $\CL(\tau_n)$:
\begin{equation}
\includegraphics{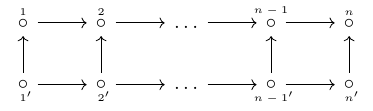}\raisebox{+20pt}{.}
\end{equation}
We have an isomorphism $\CL(\tau_{n-1})\simeq\CL(\tau_n)/I$, where $I=\langle e_n, e_{n'}\rangle$. Since we have an embedding $\rep(\CL(\tau_{n-1}))\hookrightarrow \rep(\CL(\tau_n))$, $\CL(\tau_n)$ is also representation-infinite by the inductive assumption. This completes the proof for the case of the orientation $\tau_n = f\hdots f$. The cases for the other orientations can be proved in the same way.
\qed
\subsection{Persistence Diagrams and Auslander-Reiten Quivers}\label{sec:generalization_pd}
Theorem \ref{theorem:cl} states that the commutative ladders of length up to $4$ are representation-finite, and answers the question (Q1) raised in Section \ref{sec:pm_an}. It shows that we can deal with persistence modules on these commutative ladders in a similar manner to standard persistence modules on $\A_n$ quivers. In particular, given a persistence module $M$ on a commutative ladder $\CL$ of finite type, there is a unique decomposition
\begin{equation}\label{eq:decomposition}
	M\simeq \bigoplus_{[I]\in \Gamma_0} I^{k_{[I]}},~~~~k_{[I]}\in \N_0=\{0,1,2,\dots\}
\end{equation}
by indecomposables specified by the vertices of its Auslander-Reiten quiver ${\Gamma=(\Gamma_0,\Gamma_1)}$. Hence, these indecomposables are the counterparts to the intervals in standard persistence modules on $\A_n$ quivers. 

In Section \ref{sec:ar_an}, we observe that the vertex set of the Auslander-Reiten quiver and the grid points $\{(b,d)\in \N^2\mid 1\leq b\leq d\leq n\}$ of persistence diagrams on the quiver $\Aright_n$ coincide. This is a natural consequence, since the interval representations are precisely the indecomposables of $\Aright_n$  by Gabriel's theorem \cite{gabriel}. In view of this correspondence, it is natural to define generalized persistence diagrams of persistence modules on commutative ladders of finite type as follows. 

\begin{definition}\label{definition:pd}{\rm 
Let $M$ be a persistence module on a commutative ladder of finite type, and let $\Gamma=(\Gamma_0,\Gamma_1)$ be its Auslander-Reiten quiver. The persistence diagram $D_M$ of $M$ is the function defined by
\[
	D_M: \Gamma_0\ni [I] \mapsto k_{[I]}\in \N_0,
\]
where $k_{[I]}$ is given by the indecomposable decomposition (\ref{eq:decomposition}).
}\end{definition}

As usual, we can equivalently regard this function as a multiset of vertices from $\Gamma_0$. We use both expressions of persistence diagrams in this paper.
It is obvious that the standard definition of persistence diagrams (Definition \ref{definition:pd_an}) coincides with the above definition applied to $\A_n$ quivers. These arguments answer the question (Q2) raised in Section \ref{sec:pm_an}. 

Definition \ref{definition:pd} uses the vertex set $\Gamma_0$ of the Auslander-Reiten quiver $\Gamma=(\Gamma_0,\Gamma_1)$. On the other hand, we emphasize that the Auslander-Reiten quiver  also contains information in $\Gamma_1$, the set of irreducible morphisms. 
Thus, let us consider some interpretations of $\Gamma_1$ particularly by focusing on undirected and directed paths in $\Gamma$ .

Let us first consider the relationship between the bottleneck distance defined on persistence modules on $\Aright_n$ and the underlying undirected graph of $\Gamma(\Aright_n)$.
Recall that the bottleneck distance between persistence modules $M,N$ on $\Aright_n$ is defined as
\begin{equation}\label{eq:bd}
	d_{\rm B}(M,N)=\inf_\varphi\sup_{p\in \overline{D_M}} ||p-\varphi(p)||.
\end{equation}
Here $\overline{D_M}$ and $\overline{D_N}$ are the extended persistence diagrams of $M$ and $N$ obtained by adding the set of diagonal points $(i,i-1)$, $i=1,\dots,n$, with infinite multiplicity to the original persistence diagrams. (recall the remark after Definition \ref{definition:pd_an} concerning the diagonal in the representation setting).
In addition, we take the infimum over all bijections $\varphi:\overline{D_M}\rightarrow\overline{D_N}$.
The distance $||p-\varphi(p)||$ is measured by a norm on $\R^2$, and we usually use $||x||_\infty=\max\{|x_1|,|x_2|\}$. This distance plays an important role in the stability theorems \cite{proximity,stability}. 

By the equivalence between the Auslander-Reiten quiver $\Gamma(\Aright_n)$ and the grid points of standard persistence diagrams, the minimum length of paths in the underlying undirected graph of $\Gamma(\Aright_n)$ between two intervals $p_1=(b_1,d_1), p_2=(b_2,d_2)\in \Gamma(\Aright_n)_0$ is equal to the $\ell_1$-distance $||p_1-p_2||_{1}=|b_1-b_2|+|d_1-d_2|$ between them. 
In view of this context, let us define the extended Auslander-Reiten quiver $\overline{\Gamma(\Aright_n)}$ by adding $n$ vertices corresponding to the zero representations $Z(i)$ to $\Gamma(\Aright_n)_0$,  and $n$ arrows $S(i)=\I[i,i]\rightarrow Z(i)$, $i=1,\dots,n$ from the simple representations $S(i)$ to $\Gamma(\Aright_n)_1$. 
Then, the $\ell_1$-bottleneck distance can be equivalently defined on 
$\overline{\Gamma(\Aright_n)}$.

From this extension, it is reasonable to define the bottleneck distance even in a commutative ladder $\CL$ of finite type by just replacing the quiver $\Aright_n$ with $\CL$. Namely, the distance between two vertices $[I],[J]\in\overline{\Gamma(\CL)}_0$ in the extended Auslander-Reiten quiver $\overline{\Gamma(\CL)}$
is defined to be the minimum length of undirected paths between them in $\overline{\Gamma(\CL)}$. 
Here, $\overline{\Gamma(\CL)}=(\overline{\Gamma(\CL)}_0,\overline{\Gamma(\CL)}_1)$ is defined by adding vertices corresponding to the zero representation $Z(i)$ for each simple representation $S(i)$ to $\Gamma(\CL)_0$ and arrows $S(i)\rightarrow Z(i)$ to $\Gamma(\CL)_1$. 
Then, the bottleneck distance between two persistence diagrams on a commutative ladder $\CL$ can be defined by the same formula (\ref{eq:bd}). It is interesting to study the possibility of a stability theorem even in the commutative ladder case by using this definition.

Next, we study what information we can obtain from the directions of arrows in Auslander-Reiten quivers. We note that there exists a morphism from $M$ to $N$, both of which are indecomposables, in $\rep(A)$ if and only if there exists a directed path from $[M]$ to $[N]$ in the Auslander-Reiten quiver $\Gamma(A)=(\Gamma_0,\Gamma_1)$. This claim follows from the definition of arrows in $\Gamma_1$ and the irreducibility of morphisms (Definition \ref{definition:irreducible}). 

Accordingly, let us consider the case of persistence modules on $\Aright_n$. Theorem \ref{theorem:ar_an} gives the arrows from an interval $\I[i,j]$, and one of the irreducible morphisms $\I[i,j]\rightarrow \I[i,j-1]$ is described by
%
\[
\includegraphics{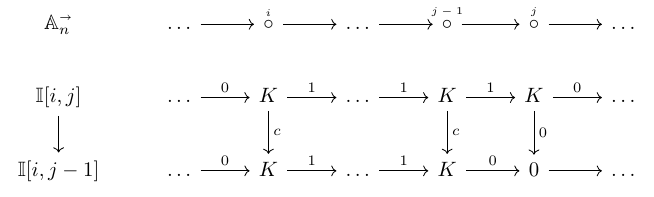}
\]
for any nonzero $c\in K$. 
The existence of the arrow $\I[i,j]\rightarrow \I[i,j-1]$ in $\Gamma(\Aright_n)$ means that the interval $\I[i,j]$ can be nontrivially mapped to $\I[i,j-1]$ in $\rep(\Aright_n)$. 

On the other hand, a morphism $\I[i,j-1]\rightarrow \I[i,j]$ in the opposite direction
\[
\includegraphics{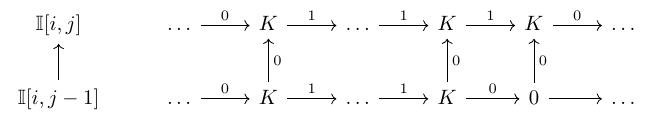}
\]
must be the zero map by the commutativity. Thus, the nonexistence of an arrow ${\I[i,j-1]}\rightarrow {\I[i,j]}$ in $\Gamma(\Aright_n)$ means that the interval $\I[i,j-1]$ cannot be nontrivially mapped to $\I[i,j]$ in $\rep(\Aright_n)$. 
Figure \ref{fig:topological_constraints} shows examples of diagrams of topological spaces that induce these algebraic constraints (only drawing at the vertices $j-1$ and $j$).
\begin{figure}[h!]
\begin{center}
\includegraphics[width=6cm]{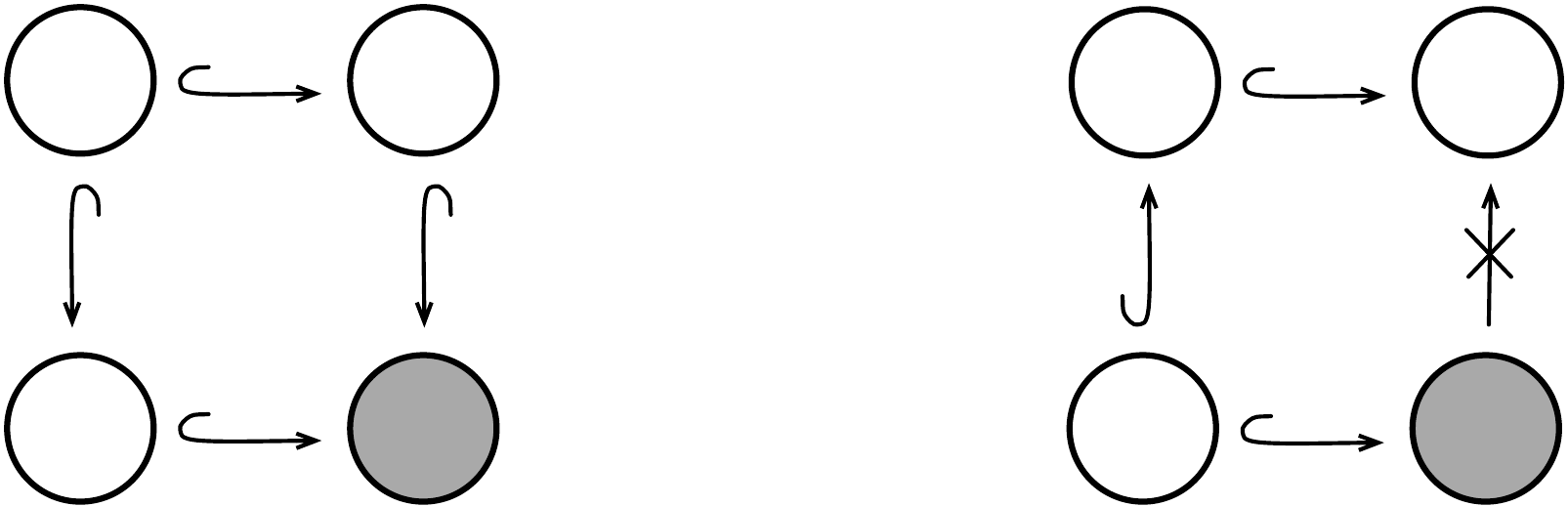}
\caption{Diagrams of topological spaces (spheres and discs) inducing algebraic constraints by taking  homology $H_1(-)$. 
The left (resp., right) diagram corresponds to $\I[j-1,j]\rightarrow \I[j-1,j-1]$ (resp., $\I[j-1,j-1]\rightarrow \I[j-1,j]$). 
On the right diagram, we cannot assign a continuous map satisfying the commutativity.}
\label{fig:topological_constraints}
\end{center}
\end{figure}

We remark that the above arguments can be applied to persistence modules on commutative ladders of finite type.
To summarize, the direction of arrows in the Auslander-Reiten quiver in general provides algebraic constraints to maps between indecomposables. This implies that indecomposables are not symmetric in general. However, it should be remarked that the bottleneck distance deals with indecomposables symmetrically (by definition of the distance). This new perspective from the Auslander-Reiten quiver may give further useful information for topological data analysis.

\subsection{Further Decomposition of Interval Representations
}\label{sec:arq_clfb}
For applications of persistence modules on a commutative ladder $\CL(\tau_n)$ of finite type, we need to interpret the indecomposables in the context of input data. In this subsection, we discuss this issue for the orientation $\tau_2=fb$ as an example, which is the quiver relevant to our original motivation of TDA on materials science. We show that the Auslander-Reiten quiver $\Gamma(\CL(fb))=(\Gamma_0,\Gamma_1)$ serves as a suitable visualization tool to capture the roles of the indecomposables and the relationships among them. In Section \ref{sec:numerics}, we provide numerical experiments and illustrate the contents of this section through those examples.

A persistence module 
\begin{equation}\label{eq:pm_cl}
M= \spaceInd{H_\ell(X_r), H_\ell(X_r \cup Y_r), H_\ell(Y_r),
    H_\ell(X_s), H_\ell(X_s\cup Y_s), H_\ell(Y_s)}
\end{equation}
on $\CL(fb)$ arises from a diagram of topological spaces related by inclusions:
\[
\spaceInd{X_r, X_r \cup Y_r, Y_r,
    X_s, X_s \cup Y_s, Y_s}.
\]
For example, the pairs of topological spaces $X_r\subset X_s$ and $Y_r\subset Y_s$ can be constructed from two point cloud data by some geometric model such as the Vietoris-Rips \cite{eh} or the alpha complex \cite{alpha} model with two parameters $r<s$. 

The diagram \eqref{eq:pm_cl} contains both persistence (in the vertical direction) and zigzag persistence (in the horizontal direction). 
Recall that the vertical direction measures the robustness of topological features in $X_r\hookrightarrow X_s$ and $Y_r\hookrightarrow Y_s$, and the horizontal direction detects common topological features.
%

Let us then interpret these data in the context of our generalized persistence diagram. 
We mark out special regions in our persistence diagram. This is in analogy to the way we classify persistent and nonpersistent features by their distances from the diagonal in the classical case.

First of all, we observe that indecomposables containing common topological features appear in the red colored region of the Auslander-Reiten quiver shown in Figure \ref{fig:ar_colored0}.
 \begin{figure}[h]
 \centering\includegraphics{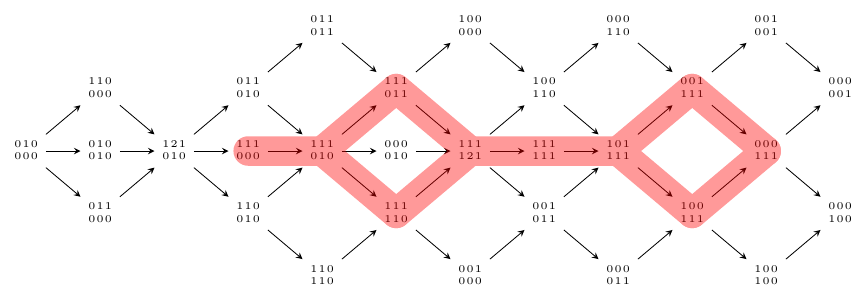}
     \caption{The indecomposables for common topological features.}
    \label{fig:ar_colored0}
\end{figure}
The vertices located in the left (or right) part of the region show common topological features shared between $X_s$ and $Y_s$ ($X_r$ and $Y_r$, resp.). The central vertex $\dimVec{1,1,1,1,1,1}$ shows common and robust topological features, and connects the common topological features at the parameters $r,s$.

Next, we study the 2-step persistence $H_\ell(X_r)\rightarrow H_\ell(X_s)\simeq \I[r,r]^{m_r}\oplus \I[s,s]^{m_s}\oplus \I[r,s]^{m_{rs}}$. Let us denote the sets of vertices colored green, red, and blue in Figure \ref{fig:ar_colored1} by $L_r$, $L_s$, and $L_{rs}$, respectively, and express the indecomposable decomposition of $M$ as
\begin{equation}\label{eq:decom_twostep}
	M\simeq
    \bigoplus_{[I] \in L_{r}} I^{k_{[I]}} \oplus
    \bigoplus_{[I] \in L_{s}} I^{k_{[I]}} \oplus
    \bigoplus_{[I] \in L_{rs}} I^{k_{[I]}} \oplus
    \bigoplus_{[I] \in L'} I^{k_{[I]}},
\end{equation}
where $L' = \Gamma_0 \setminus \left(L_r \cup L_s \cup L_{rs}\right)$. 
Note that the indecomposables in $L_r, L_s, L_{rs}$ and $L'$ can be characterized as having the dimension vectors $\dimVec{1,*,*,0,*,*}$, $\dimVec{0,*,*,1,*,*}$, $\dimVec{1,*,*,1,*,*}$, and $\dimVec{0,*,*,0,*,*}$ in the left column, respectively. 
\begin{figure}
    \centering
    \centering\includegraphics{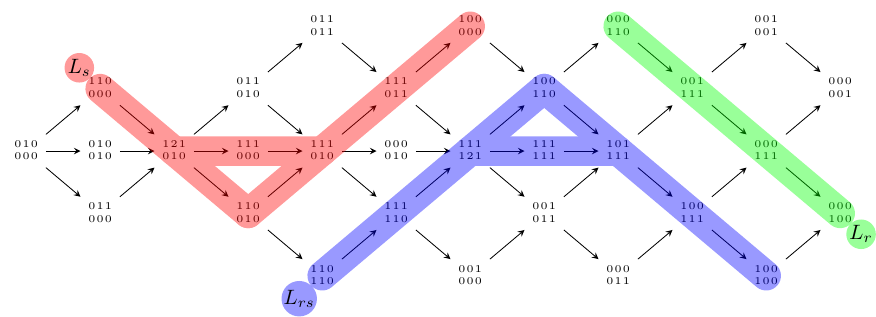}
	\caption{Three subsets $L_r$ (green), $L_s$ (red), and $L_{rs}$ (blue) corresponding to the indecomposable decomposition of the 2-step persistence module $H_\ell(X_r)\rightarrow H_\ell(X_s)$.}
    \label{fig:ar_colored1}
\end{figure}
Then, this decomposition leads to 
\[
H_\ell(X_r)\rightarrow H_\ell(X_s)\simeq \I[r,r]^{\sum_{[I]\in L_r}k_{[I]}}\oplus \I[s,s]^{\sum_{[I]\in L_s}k_{[I]}}\oplus \I[r,s]^{\sum_{[I]\in L_{rs}}k_{[I]}},
\]
and we deduce $m_r=\sum_{[I]\in L_r}k_{[I]}$, $m_s=\sum_{[I]\in L_s}k_{[I]}$, and $m_{rs}=\sum_{[I]\in L_{rs}}k_{[I]}$ by the uniqueness of the indecomposable decomposition. From this correspondence, we can recover the 2-step persistence module $H_\ell(X_r)\rightarrow H_\ell(X_s)$ from (\ref{eq:decom_twostep}).

From the opposite viewpoint, we can regard this correspondence as a further decomposition of the 2-step persistence module into that on $\CL(fb)$. This situation often occurs in TDA when we already know the 2-step persistence module $M_X=H_\ell(X_r)\rightarrow H_\ell(X_s)$ very well and investigate several inputs $\{Y^{(i)}_r\subset Y^{(i)}_s\mid i=1,\dots,a\}$ that are not well understood by using $M_X$ as a reference. 

Of course, we have similar relations for $Y_r\subset Y_s$ by flipping the colored regions in Figure \ref{fig:ar_colored1} with respect to the middle axis.
Furthermore, we can analogously consider the zigzag persistence module $H_\ell(X_s)\rightarrow H_\ell(X_s\cup Y_s)\leftarrow H_\ell(Y_s)$. In Figure \ref{fig:ar_colored2}, we show six subsets of vertices corresponding to the indecomposable decomposition 
\[
H_\ell(X_s)\rightarrow H_\ell(X_s\cup Y_s)\leftarrow H_\ell(Y_s)\simeq \I[1,1]^{s_{11}}\oplus \I[2,2]^{s_{22}} \oplus \I[3,3]^{s_{33}}\oplus \I[1,2]^{s_{12}}\oplus \I[2,3]^{s_{23}}\oplus \I[1,3]^{s_{13}},
\]
where $s_{11}=\sum_{[I]\in U_{100}}k_{[I]}$ and the other $s_{ij}$ are also derived in a similar way.
In this case, we observe that there exist two intersections of the colored regions at $\dimVec{0,1,0,1,2,1}$ and $\dimVec{1,1,1,1,0,1}$. It is clear that these intersections are induced by  
the decomposables
\[
K \rightarrow K^2 \leftarrow K \text{~~and~~} K \rightarrow 0 \leftarrow K
\]
in $H_\ell(X_s)\rightarrow H_\ell(X_s\cup Y_s)\leftarrow H_\ell(Y_s)$.

\begin{figure}
    \centering\includegraphics{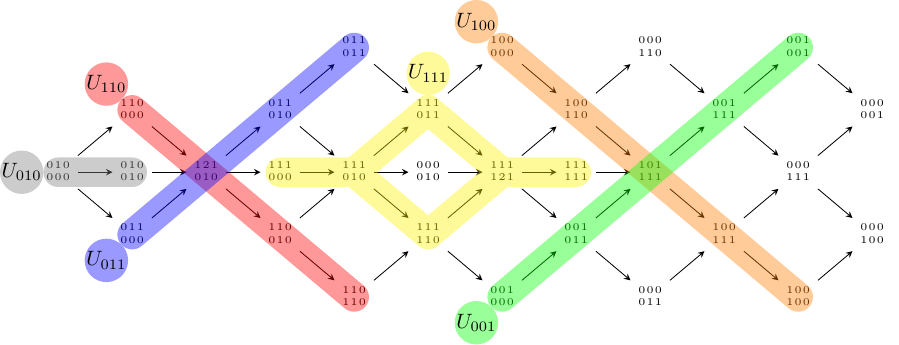}
		\caption{Six subsets corresponding to the indecomposable decomposition of the zigzag persistence module $H_\ell(X_s)\rightarrow H_\ell(X_s\cup Y_s)\leftarrow H_\ell(Y_s)$.}    
		\label{fig:ar_colored2}
\end{figure}

We finally remark that we can also derive these properties in the Auslander-Reiten quivers of the other commutative ladders of finite type in a similar manner.

\section{Algorithm}\label{sec:algorithm}
This section provides an algorithm for computing the indecomposable decomposition of a given persistence module on $\CL(\tau_n)$ of finite type. Here, we only discuss the case $\CL(fb)$.

We emphasize that our presentation is self-contained since we use only simple matrix reductions based on the structure of Auslander-Reiten quivers and do not require high-level representation theory for describing algorithms (e.g., \cite{ptma}). 
An additional benefit of using this strategy is that a relationship to the algorithm in \cite{cd} for zigzag persistence modules can be easily seen. 
We believe that this strategy improves the accessibility for readers in computational topology. 

We start with the persistence module as input. Given a diagram of simplicial complexes, we can compute its persistence module by computing homology vector spaces and the induced maps between them using standard methods. For example, let $X' \subset X$ and assume that we have bases for $H_\ell(X')$ and $H_\ell(X)$. Then, one way to compute the induced map $H_\ell(X') \rightarrow H_\ell(X)$ is to write the basis of $H_\ell(X')$ in terms of the basis of $H_\ell(X)$. This amounts to solving a linear system. For more details, see the book \cite{comp_homo}.

\subsection{Overview}

We first give an overview of our algorithm. Let 
\[
\begin{tikzcd}
V^4 \rar{f_{45}} & V^5 \rar[leftarrow]{f_{65}} & V^6\\
V^1 \rar{f_{12}} \uar{f_{14}} & V^2 \uar{f_{25}} \rar[leftarrow]{f_{32}} & V^3 \uar[swap]{f_{36}}
\end{tikzcd}
\]
be a persistence module $V$. As input, the algorithm accepts the linear maps of $V$ written as matrices with respect to some bases chosen for $V^1,\hdots, V^6$. Then, we successively extract appropriate subrepresentations and decompose them into indecomposables by changes of bases. 

Here, the extracted subrepresentations are essentially the same as streamlined modules  \cite{cd}. The concept of streamlined modules was introduced 
 in order to inductively extract indecomposables in $\rep(\A_n)$ (see Lemma 4.3 in that paper) and these are characterized by the injectivity (resp. surjectivity) of the maps in one direction (resp. in the other direction) in the $\A_n$ quiver. 
Although our representations are not in $\rep(\A_n)$, we see in the following that the same technique can be applied by specifying appropriate subspaces. 


In general, there are two parts in each step of our algorithm. 
The first part, denoted by S in the following, is to extract a subrepresentation specified by a certain subspace.
In this part, we track how the subspace of interest is brought to the other vector spaces. 

Once we finish obtaining the subrepresentation, we move on to basis arrangement, denoted by B. Here, we go in the reverse direction of the route taken in the subspace tracking of S. We rewrite the matrices of the maps so that we extract the 
subrepresentation as a direct summand $V'$ in $V=V'\oplus V''$ and decompose it into indecomposables.

We note that, even though we can compute the multiplicities of the indecomposables by extracting only the subrepresentation, we must explicitly obtain the direct sum decomposition by the basis arrangement. This is because we send the complementary summand $V''$, which has to be a representation on $\CL(fb)$, to the next step as input.

The outline of our algorithm is shown as follows:
\begin{enumerate} [Step 1:]
  \setlength{\parskip}{0cm}
  \setlength{\itemsep}{0.1cm}
\item Extract and decompose the subrepresentation specified by $\kernel f_{12}$ ($\kernel f_{32}$ by symmetry) .
\item Extract and decompose the subrepresentation specified by $\kernel f_{14}$ ($\kernel f_{36}$ by symmetry).
\item Extract and decompose the subrepresentation specified by $\kernel f_{45}$ ($\kernel f_{65}$ by symmetry).
\item Extract and decompose the subrepresentation specified by $V_1$ ($V_3$ by symmetry).
\item Decompose the subrepresentation on $\D_4$.
\end{enumerate} 

The algorithm consists of $5$ main steps moving from right to left along the Auslander-Reiten quiver $\Gamma(\CL(fb))$ as shown in Figure \ref{fig:algofive}. 
Each step extracts the indecomposables included in the specified region of the figure. 
Steps 1, 2, and 3 are reduced to finding and decomposing certain streamlined modules of $\A_n$ type. The basic process in Step 4 is the same, but we need to take special care for bases arrangements to handle the indecomposable with dimension vector $\dimVec{1,2,1,1,1,1}$. At Step 5, the remaining representation will be defined on the
$\D_4$ quiver:
\[
    \xymatrix{
        \circ \ar[r]&\circ&\ar[l]\circ\\
        &\circ \ar[u] &
    }.
\]   

From the up-down symmetry in $\Gamma(\CL(fb))$ induced by the left-right symmetry of $\CL(fb)$, it suffices to present the algorithm for only one side. 

\begin{figure}
 \centering\includegraphics{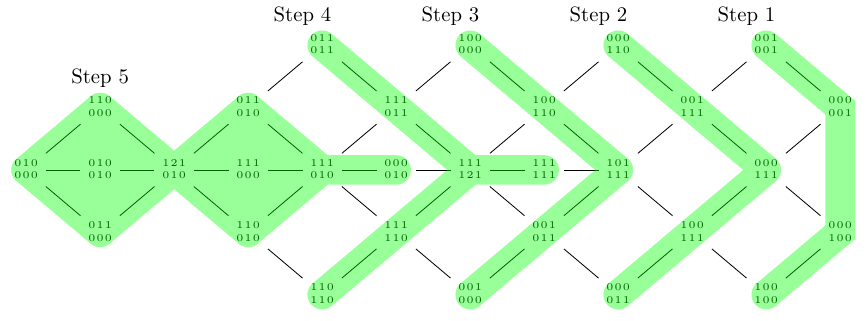}
	\caption{Flowchart of the algorithm for computing persistence diagrams.}
	\label{fig:algofive}
\end{figure}

\subsection{Detailed Description of the Algorithm}
We list some symbols and notations used in this section: 
\begin{enumerate}[(i)]
  \setlength{\parskip}{0cm}
  \setlength{\itemsep}{0.1cm}
\item $\vector{1}$ is an identity matrix with appropriate size.
\item $\vector{0}$ is a zero rectangular matrix with appropriate size.
\item $\vector{*}$ is a rectangular matrix with appropriate size. Capital letters are also used for this purpose.
\item Blank entries in matrices mean zeros.
\item $V^{i}_{k_1\dots k_a}$ is a subspace of $V^i$ extracted as a direct summand $V'$ with nonzero entries in its dimension vector $\dim V'$ specified by $k_1,\dots, k_a$.
\item $U^{i}_{k_1\dots k_a}$ is a subspace of $V^i$ specified by the subspace tracking, where the indices $k_1\dots k_a$ express the history of the track of the targeted subspace.
\item We abuse the symbol $f_{ij}$ to express both the linear map $f_{ij}: V^i\rightarrow V^j$ and its matrix representation, even if we change the bases of $V^i$ and $V^j$. 
\item We perform a change of basis on $V^i$ (resp. $V^j$) to simplify a map $f_{ij}:V^i\rightarrow V^j$. It should be automatically understood that the matrix forms of any other maps having $V^i$ (resp. $V^j$) as its domain or target space are appropriately modified by this change of basis.
\item A matrix form
$
    {\scriptsize \left[\begin{array}{cc}
            A_{11} & A_{12}\\
            A_{21} & A_{22}
        \end{array}\right]}
    :U_1\oplus U_2\rightarrow W_1\oplus W_2
$
    respects the direct sum decomposition, and each $A_{ij}$ expresses a submatrix with an appropriate size. If necessary, we add lines to explicitly describe the correspondence. For example, 
    \[
    {\scriptsize \left[\begin{array}{cc}
            A_{11} & A_{12}\\
			\hline            
            A_{21} & A_{22}\\
            \hline
            A_{31} & A_{32}\\
            A_{41} & A_{42}                        
        \end{array}\right]}
    :U_1\oplus U_2\rightarrow W_1\oplus W_2 \oplus W_3
    \]
    means that $A_{1j}$ and $A_{2j}$ correspond to $W_1$ and $W_2$, respectively, whereas both $A_{3j}$ and $A_{4j}$ correspond to $W_3$.
\item Column echelon and row echelon forms used in the exposition take the forms shown in Figure \ref{fig:echelon}. Namely, we set the bottom-left to be zero by column or row operations, respectively. 
\begin{figure}[h!]
    \centering
    \begin{tikzpicture}[scale=0.2]
        \drawmytikzmatrix
    \end{tikzpicture}~~~~~~~~
   \begin{tikzpicture}[scale=0.2,yscale=-1,rotate=270]
        \drawmytikzmatrix
    \end{tikzpicture}
\caption{Column echelon (left) and row echelon (right) forms}
\label{fig:echelon}
\end{figure}
\item Let $A: X\rightarrow Y$ be a matrix form of a linear map, and $P$ and $Q$ be basis change matrices on $X$ and $Y$, respectively. We adopt the usual convention of writing the new matrix form induced by these changes of bases as  $Q^{-1}AP$.
\item $\dimInd{a_1,a_2,a_3,a_4,a_5,a_6}$ expresses the indecomposable in $\Gamma(\CL(fb))$ whose dimension vector is $\dimVec{a_1,a_2,a_3,a_4,a_5,a_6}$.
\end{enumerate}

We warn that (v) and (vi) should not be taken as definitions of these subspaces at this point, but rather as guidelines on how we use the notation. The specific definitions will follow in the discussion below.

\begin{flushleft}{\bf Step 1:} $\dimInd{1,,,,,}$, $\dimInd{1,,,1,}$, $\dimInd{,,1}$, $\dimInd{,,1,,,1}$ 
\end{flushleft}
We extract the indecomposables $\dimInd{1,,,,,}$ and $\dimInd{1,,,1,}$; by symmetry,
the algorithm for $\dimInd{,,1}$ and $\dimInd{,,1,,,1}$ is similar.
Note that the indecomposables $\dimInd{1,,,,,}$ and $\dimInd{1,,,1,}$ are characterized by $\kernel f_{12}$. Thus, we extract them by applying the subspace tracking to $\kernel f_{12}$. 

\begin{enumerate}[S1.]
\item Perform a basis change on $V^1$ to obtain a column echelon form
    \[
    f_{12}=
    \left[\begin{array}{c|c}\vector{0} & A\end{array}\right]:
    V^1=U^1_1\oplus W^1 \rightarrow V^2,
    \]
    where the submatrix $A$ is in a column echelon form with nonzero columns.
We denote $U^1_1 = \kernel f_{12}$ and $k = \dim U^1_1$. 

\item Perform a basis change on $V^4$ to obtain a row echelon form
    \[
    f_{14} ={\scriptsize
    \left[\begin{array}{c|c}
            B & C\\
            \hline
            \vector{0} & D
        \end{array}\right]}
    :V^1=U^1_1\oplus W^1\rightarrow V^4=V^4_{14}\oplus W^4.
    \]
The dimension $\ell=\dim V^4_{14}$, which determines the position of the horizontal line, is given by the largest $\ell$ such that the left $k$ entries of the $\ell$th row is nonzero in this row echelon form for $f_{14}$. Equivalently, $V^4_{14}$ is determined by the image $V^4_{14}=f_{14}(U^1_1)$, and hence is obtained by tracking $U^1_1=\kernel f_{12}$.
We often use this way of decomposing target spaces in the following exposition without explicit comments.
\end{enumerate}

It follows from $f_{45}V^4_{14} = f_{45}f_{14}(\kernel f_{12}) = f_{25}f_{12}(\kernel f_{12}) = 0$ that there is no need to track $V^4_{14}$ by $f_{45}$. Thus, we stop the subspace tracking and switch to the basis arrangement. 
Here, we note that the representation 
$
U^1_1\stackrel{B}{\longrightarrow} V^4_{14}
$
can be regarded as a left-streamlined module in $\rep(\A_2)$. 
In the basis arrangement part, we choose suitable bases to extract the left-streamlined module as a direct summand and to decompose it into indecomposables.


\begin{enumerate}[B1.]
\item Perform a change of basis on $U^1_1$ to transform the submatrix $B$ as 
     \[
    f_{14}=
    {\scriptsize \left[\begin{array}{cc|c}
            \vector{1}&\vector{0} & C\\
            \vector{0}&\vector{0} & D
        \end{array}\right]}:
    V^1=\left(V^1_{14}\oplus V^1_{1} \right)\oplus W^1\rightarrow V^4=V^4_{14}\oplus W^4,
    \]
    where $U^1_1 = V^1_{14}\oplus V^1_1$.
     Then, we replace the subspace $W^1$ with $X^1$ in such a way that $f_{14}$ becomes
    \begin{equation}\label{eq:f14_step1}
        f_{14}=
        {\scriptsize \left[\begin{array}{cccc}
                \vector{1}&\vector{0} & \vector{0}\\
                \vector{0}&\vector{0} & D
            \end{array}\right]}:
        V^1=V^1_{14}\oplus V^1_{1}\oplus X^1\rightarrow V^4=V^4_{14}\oplus W^4
    \end{equation}
    by the basis change matrix 
${\scriptsize \left[\begin{array}{ccc}
                \vector{1} & \vector{0} & -C\\
                \vector{0} &\vector{1} & \vector{0}\\
                \vector{0} &\vector{0} &\vector{1}
            \end{array}
        \right]
}$ on $V^1$.
    It should be noted that these changes of bases on $V^1$ do not change the matrix of $f_{12}$ at all, and hence $\kernel f_{12}=V^1_{14}\oplus V^1_1$ is maintained. 
\end{enumerate}

From the subspace tracking and basis arrangement, we obtain the following direct summands of $V$
\[
\begin{array}{rcccrcc}
    \dimInd{1,}^{k_1} &=& \spaceInd{V^1_1,}  &\;{\rm and}\;&
    \dimInd{1,,,1}^{k_{14}} &=& \spaceInd{V^1_{14},,,V^4_{14}},
\end{array}
\]
where $k_1 = \dim V^1_1$ and $k_{14} = \dim V^1_{14} = \dim V^4_{14}$.
As noted at the beginning of Step 1, the direct summands $\dimInd{,,1}^{k_3}$ and $\dimInd{,,1,,,1}^{k_{36}}$ are derived in a way similar to the above.
We remove these direct summands from 
\[
V = \left(\dimInd{1,}^{k_1} \oplus \dimInd{1,,,1}^{k_{14}} \oplus
    \dimInd{,,1}^{k_3} \oplus \dimInd{,,1,,,1}^{k_{36}}\right) \oplus V'
\]
and send $V'$ to the next step. In the level of the matrices of
$f_{ij}$, we delete the appropriate rows and columns.

\begin{flushleft}{\bf Step 2:} $\dimInd{1,1}$, $\dimInd{1,1,1,0,0,1}$, $\dimInd{1,1,1}$, $\dimInd{1,1,1,1}$, $\dimInd{,1,1}$
\end{flushleft}
By renaming the summand $V'$ in Step 1 to $V$, we assume that the maps $f_{12}$
and $f_{32}$ are injective. We extract the indecomposables $\dimInd{1,1}, \dimInd{1,1,1}, \dimInd{1,1,1,0,0,1}$ 
by tracking the subspace $\kernel f_{14}$. Since the computations are similar to those in the previous step, we explain only its core parts.

\begin{enumerate}[S1.]
\item Perform a change of basis on $V^1$ to obtain a column echelon form
\[
f_{14}=\left[\begin{array}{cc}
\vector{0} &\vector{*}
\end{array}\right]: V^1=U^1_1\oplus W^1\rightarrow V^4.
\]
\item Perform a change of basis on $V^2$ to obtain a row echelon form
\[
f_{12}={\scriptsize \left[\begin{array}{cc}
\vector{1}&\vector{0}\\
\vector{0}&\vector{*}
\end{array}
\right]}: 
V^1=U^1_1\oplus W^1\rightarrow V^2=U^2_{12}\oplus W^2.
\]
Since $f_{12}$ is injective, we can obtain the identity submatrix $\vector{1}$ in the above row echelon form.
\item Perform a change of basis on $V^3$ to obtain a column echelon form
\[
f_{32}={\scriptsize \left[\begin{array}{c|c}
\vector{*}&\vector{*}\\
\vector{0}&A
\end{array}
\right]}: 
V^3=U^3_{123}\oplus W^3\rightarrow V^2=U^2_{12}\oplus W^2,
\]
where $U^3_{123}$ (equivalently, the position of the vertical line in $f_{32}$) is determined by the preimage $U^3_{123}=f^{-1}_{32}(U^2_{12})$. Similar to S2 in Step 1, we also use this way of decomposing domains of maps without further comments.
\item Perform a change of basis on $V^6$ to obtain a row echelon form
\[
f_{36}={\scriptsize \left[\begin{array}{cc}
B_1 & B_2\\
\vector{0} & \vector{*}
\end{array}\right]}: 
V^3=U^3_{123}\oplus W^3\rightarrow V^6=V^6_{1236}\oplus W^6.
\]
\end{enumerate}

It follows from commutativity that we have $f_{65}(V^6_{1236})=0$, and hence the subspace tracking may stop here.
Again, we remark that the representation
\[
	U^1_1\longrightarrow U^2_{12}\longleftarrow U^3_{123}\longrightarrow V^6_{1236}
\]
is a left-streamlined module in $\rep(\A_4)$.
Next, we switch to the basis arrangement.
\begin{enumerate}[B1.]
\item Perform a change of basis on $U^3_{123}$ to transform $B_1$ as follows
\[
f_{36}={\scriptsize \left[\begin{array}{cc|c}
\vector{0}&\vector{1} & B_2\\
\vector{0}&\vector{0}& \vector{*}
\end{array}\right]}: 
V^3= \left(V^3_{123}\oplus V^3_{1236}\right)\oplus W^3\rightarrow V^6=V^6_{1236}\oplus W^6.
\]
Replace the subspace $W^3$ with $X^3$ in such a way that $f_{36}$ becomes
\[
f_{36}={\scriptsize \left[\begin{array}{ccc}
\vector{0}&\vector{1} & \vector{0}\\
\vector{0}&\vector{0}& \vector{*}
\end{array}\right]}: 
V^3=V^3_{123}\oplus V^3_{1236}\oplus X^3\rightarrow V^6=V^6_{1236}\oplus W^6
\]
by the basis change matrix 
${\scriptsize \left[\begin{array}{ccc}
\vector{1} & \vector{0} & \vector{0} \\
\vector{0} & \vector{1} & -B_2\\
\vector{0} & \vector{0} & \vector{1} 
\end{array}\right]}$ on $V^3$. 
\item The map $f_{32}$ is now expressed as 
\[
	f_{32}={\scriptsize \left[\begin{array}{ccc}
\vector{*} & \vector{*} & \vector{*} \\
\vector{0} & \vector{0} & A
\end{array}\right]}: V^3=V^3_{123}\oplus V^3_{1236}\oplus X^3\rightarrow V^2=U^2_{12}\oplus W^2
\]
after the changes of bases. We note that $f_{32}$ is injective and that the submatrix $A$, which has no zero columns by construction of $U^3_{123}$, is in a column echelon form. Perform changes of bases on $U^2_{12}$ and $W^2$, independently of each other, to obtain the matrix form
\[
f_{32}={\scriptsize \left[\begin{array}{ccc}
\vector{0} & \vector{0} & C_1 \\
\hline
\vector{1} & \vector{0} & C_2 \\
\hline
\vector{0} & \vector{1} & C_3 \\
\hline
\vector{0} & \vector{0} & \vector{0} \\
\vector{0} & \vector{0} & \vector{1} 
\end{array}\right]}: 
V^3=V^3_{123}\oplus V^3_{1236}\oplus X^3\rightarrow V^2=V^2_{12}\oplus V^2_{123}\oplus V^2_{1236}\oplus W^2.
\]
Replace the subspace $W^2$ with $X^2$ in such a way that $f_{32}$ becomes
\[
f_{32}={\scriptsize \left[\begin{array}{ccc}
\vector{0} & \vector{0} & \vector{0} \\
\hline
\vector{1} & \vector{0} & \vector{0} \\
\hline
\vector{0} & \vector{1} & \vector{0} \\
\hline
\vector{0} & \vector{0} & \vector{0} \\
\vector{0} & \vector{0} & \vector{1} 
\end{array}\right]}: 
V^3=V^3_{123}\oplus V^3_{1236}\oplus X^3\rightarrow V^2=V^2_{12}\oplus V^2_{123}\oplus V^2_{1236}\oplus X^2
\]
by the basis change matrix
$
{\scriptsize \left[\begin{array}{ccccc}
\vector{1} & \vector{0} & \vector{0} & \vector{0}& -C_1\\
\vector{0} & \vector{1} & \vector{0} & \vector{0}& -C_2\\
\vector{0} & \vector{0} & \vector{1} & \vector{0}& -C_3\\
\vector{0} & \vector{0} & \vector{0} & \vector{1}& \vector{0}\\
\vector{0} & \vector{0} & \vector{0} & \vector{0}& \vector{1}
\end{array}\right]^{-1}
}$
on $V^2$.
\item The map $f_{12}$ is now expressed as 
\[
f_{12}={\scriptsize \left[\begin{array}{cc}
\vector{*} &\vector{*}\\
\vector{*} &\vector{*}\\
\vector{*} &\vector{*}\\
\vector{0} &\vector{*}
\end{array}\right]}: V^1=U^1_1\oplus W^1\rightarrow V^2=V^2_{12}\oplus V^2_{123}\oplus V^2_{1236}\oplus X^2.
\]
We note that $U^1_1\simeq U^2_{12}=V^2_{12}\oplus V^2_{123}\oplus V^2_{1236}$. Perform a change of basis on $U^1_1$ to obtain
\[
f_{12}={\scriptsize \left[\begin{array}{cccc}
\vector{1} & \vector{0} & \vector{0} & D_1\\
\vector{0} & \vector{1} & \vector{0} & D_2\\
\vector{0} & \vector{0} & \vector{1} & D_3\\
\vector{0} & \vector{0} & \vector{0} & \vector{*}
\end{array}\right]}: 
V^1=V^1_{12}\oplus V^1_{123}\oplus V^1_{1236}\oplus W^1\rightarrow V^2=V^2_{12}\oplus V^2_{123}\oplus V^2_{1236}\oplus X^2. 
\]
Replace the subspace $W^1$ with $X^1$ in such a way that $f_{12}$ becomes
\[
f_{12}={\scriptsize \left[\begin{array}{cccc}
\vector{1} & \vector{0} & \vector{0} & \vector{0} \\
\vector{0} & \vector{1} & \vector{0} & \vector{0} \\
\vector{0} & \vector{0} & \vector{1} & \vector{0} \\
\vector{0} & \vector{0} & \vector{0} & \vector{*} 
\end{array}\right]}: V^1=V^1_{12}\oplus V^1_{123}\oplus V^1_{1236}\oplus X^1\rightarrow V^2=V^2_{12}\oplus V^2_{123}\oplus V^2_{1236}\oplus X^2\\
\]
by the basis change matrix
${\scriptsize 
\left[\begin{array}{cccc}
\vector{1} & \vector{0} & \vector{0} & -D_1 \\
\vector{0} & \vector{1} & \vector{0} & -D_2 \\
\vector{0} & \vector{0} & \vector{1} & -D_3 \\
\vector{0} & \vector{0} & \vector{0} & \vector{1} 
\end{array}\right]
}$ on $V^1$. 
This change of basis does not affect $\kernel f_{14}$ at all.
\end{enumerate}

We obtain the following direct summands of $V$:
\begin{eqnarray*}
    &&\dimInd{1,1}^{k_{12}} = \spaceInd{V^1_{12},V^2_{12}},~~~
    \dimInd{1,1,1}^{k_{123}} = \spaceInd{V^1_{123},V^2_{123},V^3_{123}},\\
    &&\dimInd{1,1,1,0,0,1}^{k_{1236}} = \spaceInd{V^1_{1236},V^2_{1236},V^3_{1236},0,0,V^6_{1236}}, 
\end{eqnarray*}
where 
\[
   k_{12} = \dim V^i_{12}~(i=1,2),~~
   k_{123} = \dim V^i_{123}~(i=1,2,3),~~
   k_{1236} = \dim V^i_{1236}~(i=1,2,3,6).
\]
By a symmetric argument, we obtain $\dimInd{,1,1,,,}^{k_{23}}$ and $\dimInd{1,1,1,1,,}^{k_{1234}}$. The remaining direct summand $V'$ in
\[
V=\left(\dimInd{1,1}^{k_{12}}\oplus\dimInd{1,1,1}^{k_{123}}\oplus\dimInd{1,1,1,0,0,1}^{k_{1236}}\oplus\dimInd{,1,1,,,}^{k_{23}}\oplus\dimInd{1,1,1,1,,}^{k_{1234}}\right)\oplus V'
\]
is sent to the next step.

\begin{flushleft}{\bf Step 3:} $\dimInd{0,0,0,1}$, $\dimInd{1,1,,1}$, $\dimInd{1,1,1,1,,1}$, $\dimInd{,1,1,,,1}$, 
$\dimInd{,,,,,1}$
\end{flushleft}
We extract the indecomposables 
$\dimInd{0,0,0,1}, 
    \dimInd{1,1,,1}, 
    \dimInd{1,1,1,1,,1}, 
    \dimInd{,1,1,,,1}, 
    \dimInd{,,,,,1}
$ in this step by applying the subspace tracking and the basis arrangement to $\kernel f_{45}$ and $\kernel f_{65}$. At the start of this step, the maps $f_{12}, f_{32}, f_{14}, f_{36}$ are injective. 
Since the tasks here are essentially the same as the previous two steps, we omit the exposition of this step.

\begin{flushleft}{\bf Step 4:}
\end{flushleft}
This step is divided into two parts (Steps 4.1 and 4.2). 
The reason for this division is that 
we need to handle the case of $\dimInd{1,1,1,1,1,1}$ and $\dimInd{1,2,1,1,1,1}$ in a special manner, and this is facilitated by first extracting 
$\dimInd{,1,1,,1,1},
    \dimInd{,1,1,1,1,1},
    \dimInd{1,1,,1,1,},
    \dimInd{1,1,,1,1,1}$. We also note that all maps except $f_{25}$ are injective as input from Step 3. 

\begin{flushleft}{\bf Step 4.1:}  $\dimInd{,1,1,,1,1}$, $\dimInd{,1,1,1,1,1}$, $\dimInd{1,1,,1,1,}$, 
$\dimInd{1,1,,1,1,1}$
\end{flushleft}
We extract the indecomposables 
$\dimInd{,1,1,,1,1}$ and $\dimInd{,1,1,1,1,1}$ by applying the subspace tracking and basis arrangement to $V^3$.

\begin{enumerate}[S1.]
\item Perform a change of basis on $V^6$:
$
f_{36}={\scriptsize \left[\begin{array}{c}
\vector{1}\\
\vector{0}
\end{array}\right]}: V^3\rightarrow V^6=U^6_{36}\oplus W^6.
$
\item Perform a change of basis on $V^5$:
$
f_{65}={\scriptsize \left[\begin{array}{cc}
\vector{1} & \vector{0}\\
\vector{0} & \vector{*}
\end{array}\right]}: V^6=U^6_{36}\oplus W^6\rightarrow V^5=U^5_{356}\oplus W^5.
$
\item Perform a change of basis on $V^4$ to obtain a column echelon form 
\[
f_{45}={\scriptsize \left[\begin{array}{cc}
\vector{*} & \vector{*}\\
\vector{0} & \vector{*}
\end{array}\right]}: 
V^4=U^4_{3456}\oplus W^4\rightarrow V^5=U^5_{356}\oplus W^5.
\]
\item Perform a change of basis on $V^1$ to obtain a column echelon form
\[
f_{14}={\scriptsize \left[\begin{array}{cc}
\vector{*} & \vector{*}\\
\vector{0} & \vector{*}
\end{array}\right]}: 
V^1=U^1_{13456}\oplus W^1\rightarrow V^4=U^4_{3456}\oplus W^4.
\]
\end{enumerate}

\begin{enumerate}[B1.]
\item Perform changes of bases on $U^4_{3456}$ and $W^4$, independently of each other, to obtain
\[
f_{14}={\scriptsize \left[\begin{array}{cc}
\vector{0} & A_1\\
\hline
\vector{1} & A_2\\
\hline
\vector{0} & \vector{0}\\
\vector{0} & \vector{1}
\end{array}\right]}: V^1=U^1_{13456}\oplus W^1\rightarrow V^4=V^4_{23456}\oplus U^4_{13456}\oplus W^4.
\]
Replace $W^4$ with $X^4$ in such a way that $f_{14}$ becomes
\[
f_{14}={\scriptsize \left[\begin{array}{cc}
\vector{0} & \vector{0}\\
\hline
\vector{1} & \vector{0}\\
\hline
\vector{0} & \vector{0}\\
\vector{0} & \vector{1}
\end{array}\right]}: V^1=U^1_{13456}\oplus W^1\rightarrow V^4= V^4_{23456}\oplus U^4_{13456}\oplus X^4
\]
by the basis change matrix
$
{\scriptsize \left[\begin{array}{cccc}
\vector{1} & \vector{0} & \vector{0} & -A_1 \\
\vector{0} & \vector{1} & \vector{0} & -A_2 \\
\vector{0} & \vector{0} & \vector{1} & \vector{0} \\
\vector{0} & \vector{0} & \vector{0} & \vector{1} 
\end{array}\right]^{-1}}
$
on $V^4$. 

\item Note that the columns of the two submatrices $[B_1~B_2]$ and $C$ in 
\[
f_{45}={\scriptsize \left[\begin{array}{ccc}
B_1&B_2 & \vector{*}\\
\vector{0}&\vector{0} & C
\end{array}\right]}: 
V^4=V^4_{23456}\oplus U^4_{13456}\oplus X^4\rightarrow V^5=U^5_{356}\oplus W^5
\]
are linearly independent. Perform changes of bases on $U^5_{356}$ and $W^5$, independently of each other, to obtain
\[
f_{45}={\scriptsize \left[\begin{array}{ccc}
\vector{0} & \vector{0} & D_1\\
\hline
\vector{1} & \vector{0} & D_2\\
\hline
\vector{0} & \vector{1} & D_3\\
\hline
\vector{0} & \vector{0} & \vector{1}\\
\vector{0} & \vector{0} & \vector{0}
\end{array}\right]}: 
V^4=V^4_{23456}\oplus U^4_{13456}\oplus X^4\rightarrow V^5=V^5_{2356}\oplus V^5_{23456}\oplus U^5_{13456}\oplus W^5.
\]
Replace $W^5$ with $X^5$ in such a way that $f_{45}$ becomes
\[
f_{45}={\scriptsize \left[\begin{array}{ccc}
\vector{0} & \vector{0} & \vector{0}\\
\hline
\vector{1} & \vector{0} & \vector{0}\\
\hline
\vector{0} & \vector{1} & \vector{0}\\
\hline
\vector{0} & \vector{0} & \vector{1}\\
\vector{0} & \vector{0} & \vector{0}
\end{array}\right]}: 
V^4=V^4_{23456}\oplus U^4_{13456}\oplus X^4\rightarrow V^5=V^5_{2356}\oplus V^5_{23456}\oplus U^5_{13456}\oplus X^5
\]
by the basis change matrix
$
{\scriptsize \left[\begin{array}{ccccc}
\vector{1} & \vector{0} & \vector{0} & -D_1 & \vector{0} \\
\vector{0} & \vector{1} & \vector{0} & -D_2 & \vector{0} \\
\vector{0} & \vector{0} & \vector{1} & -D_3 & \vector{0} \\
\vector{0} & \vector{0} & \vector{0} & \vector{1} & \vector{0}\\
\vector{0} & \vector{0} & \vector{0} & \vector{0} & \vector{1}
\end{array}\right]^{-1}}
$
on $V^5$.
\item Note that the map $f_{65}$ is now in the form
\[
f_{65}={\scriptsize \left[\begin{array}{cc}
\vector{*} & \vector{*}\\
\vector{*} & \vector{*}\\
\vector{*} & \vector{*}\\
\vector{0} & \vector{*}
\end{array}\right]
}
: V^6=U^6_{36}\oplus W^6\rightarrow V^5=V^5_{2356}\oplus V^5_{23456}\oplus U^5_{13456}\oplus X^5
\]
and $U^6_{36}\simeq V^5_{2356}\oplus V^5_{23456}\oplus U^5_{123456}$.
Perform a change of basis on $V^6$:
\[
f_{65}={\scriptsize \left[\begin{array}{cccc}
\vector{1} & \vector{0} & \vector{0} &\vector{0}\\
\vector{0} & \vector{1} & \vector{0} &\vector{0}\\
\vector{0} & \vector{0} & \vector{1} &\vector{0}\\
\vector{0} & \vector{0} & \vector{0} & \vector{*}
\end{array}\right]}: 
V^6=V^6_{2356}\oplus V^6_{23456}\oplus U^6_{13456}\oplus X^6 \rightarrow V^5=V^5_{2356}\oplus V^5_{23456}\oplus U^5_{13456}\oplus X^5.
\]
\item Perform a change of basis on $V^3$:
\[
f_{36}={\scriptsize \left[\begin{array}{ccc}
\vector{1} & \vector{0} &\vector{0}\\
\vector{0} & \vector{1} & \vector{0}\\
\vector{0} & \vector{0} & \vector{1}\\
\vector{0} & \vector{0} & \vector{0}
\end{array}\right]}:
V^3=V^3_{2356}\oplus V^3_{23456}\oplus U^3_{13456}\rightarrow V^6=V^6_{2356}\oplus V^6_{23456}\oplus U^6_{13456}\oplus X^6.
\]
\item Perform a change of basis on $V^2$:
\[
f_{32}={\scriptsize \left[\begin{array}{ccc}
\vector{1} & \vector{0} & \vector{0}\\
\vector{0} & \vector{1} & \vector{0}\\
\vector{0} & \vector{0} & \vector{1}\\
\vector{0} & \vector{0} & \vector{0}
\end{array}\right]}: 
V^3=V^3_{2356}\oplus V^3_{23456}\oplus U^3_{13456}\rightarrow V^2=V^2_{2356}\oplus V^2_{23456}\oplus U^2_{123456}\oplus W^2.
\]
\item By commutativity, the map $f_{25}$ is expressed by 
\[
f_{25}={\scriptsize \left[\begin{array}{cccc}
\vector{1} &\vector{0}& \vector{0} & E_1\\
\vector{0} & \vector{1}&\vector{0} & E_2\\
\vector{0} & \vector{0}&\vector{1} & E_3\\
\vector{0} & \vector{0}&\vector{0} & \vector{*}
\end{array}\right]}:
V^2=V^2_{2356}\oplus V^2_{23456}\oplus U^2_{123456}\oplus W^2\rightarrow V^5=V^5_{2356}\oplus V^5_{23456}\oplus U^5_{13456}\oplus X^5.
\] 
Replace the subspace $W^2$ with $X^2$ in such a way that $f_{25}$ becomes
\[
f_{25}={\scriptsize \left[\begin{array}{cccc}
\vector{1} &\vector{0}& \vector{0} & \vector{0}\\
\vector{0} & \vector{1}&\vector{0}& \vector{0}\\
\vector{0} & \vector{0}&\vector{1}& \vector{0}\\
\vector{0} & \vector{0}&\vector{0}& \vector{*}
\end{array}\right]}:
V^2=V^2_{2356}\oplus V^2_{23456}\oplus U^2_{123456}\oplus X^2\rightarrow V^5=V^5_{2356}\oplus V^5_{23456}\oplus U^5_{13456}\oplus X^5
\]
by the basis change matrix
$
{\scriptsize \left[\begin{array}{cccc}
\vector{1} & \vector{0} & \vector{0}& -E_1\\
\vector{0} & \vector{1} & \vector{0}& -E_2\\
\vector{0} & \vector{0} & \vector{1}& -E_3\\
\vector{0} & \vector{0} & \vector{0}& \vector{1}
\end{array}\right]}
$ on $V^2$.
We note that this change of basis does not affect on $f_{32}$ at all. 
\end{enumerate}
From these subspace tracking and basis arrangement, we obtain the direct summands $\dimInd{0,1,1,0,1,1}^{k_{2356}}$ and $\dimInd{0,1,1,1,1,1}^{k_{23456}}$.
The direct summands $\dimInd{1,1,,1,1,}^{k_{1245}}, \dimInd{1,1,,1,1,1}^{k_{12456}}$ are obtained in a similar way by symmetry. 

\begin{flushleft}{\bf Step 4.2:}
    $\dimInd{1,1,1,1,1,1}$, $\dimInd{1,2,1,1,1,1}$.
\end{flushleft}
In this step, the subspace tracking is similar to previous steps. However, the basis arrangement requires a different and careful treatment.

\begin{enumerate}[S1.]
\item Perform a change of basis on $V^4$:
$
    f_{14}={\scriptsize \left[
        \begin{array}{c}
            \vector{1}\\ 
            \vector{0}
        \end{array}
    \right]}: V^1\rightarrow V^4=U^4_{14}\oplus W^4
$.
\item Perform a change of basis on $V^5$:
$
    f_{45}={\scriptsize \left[
        \begin{array}{cc}
            \vector{1} &\vector{0}\\ 
            \vector{0} &\vector{*}
        \end{array}\right]}: V^4=U^4_{14}\oplus W^4\rightarrow
    V^5=U^5_{145}\oplus W^5
$.
\item Perform a change of basis on $V^6$:
$
    f_{65}={\scriptsize \left[
        \begin{array}{cc}
            \vector{1} & \vector{0} \\ 
            \vector{0} & \vector{*}
        \end{array}\right]}: V^6=U^6_{1456}\oplus W^6 \rightarrow V^5=U^5_{145}\oplus W^5.$
Note that we obtain $\vector{1}$ in the top left of $f_{65}$, since the direct summand consisting of $\dimInd{1,1,0,1,1,0}$ is already removed in the previous step.

\item Perform a change of basis on $V^3$:
    \[
    f_{36}={\scriptsize \left[
        \begin{array}{c}
            \vector{1}\\ 
            \vector{0}
        \end{array}\right]}: V^3 \rightarrow U^6_{1456}\oplus W^6.
    \]
Similarly, we obtain $\vector{1}$ in $f_{36}$, since the direct summand consisting of $\dimInd{1,1,,1,1,1}$ is already removed.
In particular, this implies $\dim V^1=\dim V^3$, and we denote this dimension by $s$. 
\end{enumerate}

Now we switch to the basis arrangement to derive $\dimInd{1,1,1,1,1,1}$ and $\dimInd{1,2,1,1,1,1}$, where the linear maps in $\dimInd{1,2,1,1,1,1}$ are shown in (\ref{eq:twodimentry1}). They are extracted by induction in the following. Let us denote the matrix forms of $f_{12}$ and $f_{32}$ by
\[
f_{12}=\left[\begin{array}{ccc}
        a_1 & \cdots & a_s
    \end{array}\right] \text{~~and~~}
f_{32}=\left[\begin{array}{ccc}
        b_1 & \cdots & b_s
    \end{array}\right],
\]
where $a_i$ and $b_i$ are the columns of $f_{12}$ and $f_{32}$, respectively.
By commutativity in $\CL(fb)$ and the bases chosen in the subspace tracking, we have $f_{25}(a_i)=f_{25}(b_i) \in V^5$ for $i=1,\dots,s$ and these vectors in $V^5$ are linearly independent.

Let us introduce a matrix defined by
\[
C=\left[\begin{array}{ccccccc}
        a_1 & b_1 & a_2 & b_2 & \cdots & a_s & b_s
    \end{array}\right].
\]
In the following induction, we transform $C$ by row and column operations into 
\begin{equation}\label{eq:hatC}
    \hat C = \left[\begin{array}{ccc|ccccc}
            1&&            &&         &&            &\\
            &\ddots&       &&         &\vector{0}&  &\\
            &&1            &&&&&\\
            \hline
            &&             &1&1       &&            &\\
            &\vector{0}&   &&         &\ddots&      &\\
            &&             &&         &&1           &1\\
            \hline
            &\vector{0}&   &&         &\vector{0}&  &
        \end{array}\right]
\end{equation}
with the $2s_1\times 2s_1$ identity matrix in the top left and the $s_2\times
2s_2$ matrix in the middle right for some $s_1,s_2\in\N_0$ with
$s=s_1+s_2$. In transforming $C$ to the above form, we perform each
column operation on a pair of columns $c_i, c_j$ in $C=[c_1\dots c_{2s}]$ with $i=j$ (mod 2).

Recall that a row operation on $C$ corresponds to a basis change on $V^2$, while
each column operation following the rule mentioned above can be viewed as a basis change for either $V^1$ or $V^3$. Hence, from $\hat{C}$, we immediately obtain 
\[
f_{12}=\left[\begin{array}{cccc|ccc}
        1&     &      &     &    &          &\\
        0&     &      &     &    &          &\\
        &1     &      &     &    &          &\\
        &0     &      &     &    &\vector{0}&\\
        &      &\ddots&     &    &          &\\
        &      &      &    1&    &          &\\
        &      &      &    0&    &          &\\
        \hline
        &      &      &     &1   &          &\\
        &      &\vector{0}& &    &\ddots    &\\
        &&&&&                               &1\\
        \hline
        &      &\vector{0}& &    &\vector{0}&
    \end{array}\right],~~~
f_{32}=\left[\begin{array}{cccc|ccc}
        0&&&&&&\\
        1&&&&&&\\
        &0&&&&&\\
        &1&&&&\vector{0}&\\
        &&\ddots&&&&\\
        &&&1&&&\\
        &&&0&&&\\
        \hline
        &&&&1&&\\
        &&\vector{0}&&&\ddots&\\
        &&&&&&1\\
        \hline
        &&\vector{0}&&&\vector{0}&
    \end{array}\right].
\]
Of course, the matrix forms of $f_{14}, f_{45}, f_{65}$, $f_{36}$ will change under the process of the transformation of $C$. Hence, in the induction process, we also perform changes of bases on $V^4, V^5, V^6$ to keep these matrices in the forms shown in the subspace tracking (S1 to S4). 

The inductive assumption is that the matrix $C$ can be written in the form
\[
C=\left[\begin{array}{cc}
        C' & \vector{*}\\
        \vector{0} & \vector{*}
    \end{array}\right],
\]
where $C'$ is given by
\[
C'=\left[\begin{array}{ccc|ccccc}
        1&&            &&         &&            &\\
        &\ddots&       &&         &\vector{0}&  &\\
        &&1            &&&&&\\
        \hline
        &&             &1&1       &&            &\\
        &\vector{0}&   &&         &\ddots&      &\\
        &&             &&         &&1           &1
    \end{array}\right]
\]
with the $2\ell_1\times 2\ell_1$ identity matrix in the top left and the
$\ell_2\times 2\ell_2$ matrix in the bottom right. Let
$\ell=\ell_1+\ell_2$ and $m=2\ell_1+\ell_2$. The size of the matrix
$C'$ is $m \times 2\ell$. We show that the number $2\ell$ of the columns in $C'$ can
be extended to $2\ell+2$ without affecting the matrix forms of $f_{14}, f_{45}, f_{65}$, $f_{36}$ obtained in the subspace tracking. This gives the desired
form~\eqref{eq:hatC} of the matrix $\hat C$ by induction.

The column in $C$ next to $C'$ is numbered $2\ell+1$, corresponding to the $(\ell+1)$st column $a_{\ell+1}$ of $f_{12}$. We claim that there exists a nonzero element $a_{\ell+1,i}$ at some $i>m$ in this column.
If not, $a_{\ell+1}$ can be expressed as a linear combination
\[
a_{\ell+1} =
\sum_{i=1}^{\ell_1}\left(\alpha_i a_i + \beta_i b_i\right) +
\sum_{i= \ell_1+1}^{\ell}\alpha_i a_i
\]
due to the current form of $C$.
Then, by mapping both sides to $V^5$ by $f_{25}$, we have 
\[
f_{25}(a_{\ell+1}) =
\sum_{i=1}^{\ell_1}\left(\alpha_i+\beta_i\right)f_{25}(a_i) +
\sum_{i=\ell_1+1}^{\ell}\alpha_if_{25}(a_i).
\]
This contradicts the fact that $f_{25}(a_1),\dots,f_{25}(a_{\ell+1})$
are linearly independent.
Hence, the matrix $C$ can be expressed by 
\[
\begin{array}{ll}
\hspace{1.3cm}\tiny{\mbox{$(2\ell+1)$st}}& \\
C=\left[\begin{array}{c|c|c}
        C' & \vector{~0~} & \vector{~*}\\
        \hline
        \vector{0} &1& \vector{~*}\\
        \hline
        \vector{0} & \vector{0} & \vector{~*}
    \end{array}\right] & \hspace{-0.2cm}
\begin{array}{c}
\\
\tiny{\mbox{$(m+1)$st}}\\
\\
\end{array}
\end{array}
\]
after suitable row operations.

Next, we move to the $(2\ell+2)$nd column of $C$, which is the column $b_{\ell+1}$ of $f_{32}$.

\vspace{0.2cm}\noindent\textbf{Case 1:} 
If the vector $b_{\ell+1}$ has a nonzero element $b_{\ell+1,i}$
with $i>m+1$, then $C$ can be transformed by suitable row operations into 
\[
C=\left[\begin{array}{c|cc|c}
        C' & \vector{~0~} & \vector{~0~} & \vector{~*}\\
        \hline
        \vector{0} &1&0 & \vector{~*}\\
        \vector{0} &0&1 & \vector{~*}\\
        \hline
        \vector{0} & \vector{0} & \vector{0}& \vector{~*} 
    \end{array}\right]
\hspace{-0.1cm}    
	\begin{array}{l}
    \\
    \tiny\mbox{$(m+1)$st}\\
    \tiny\mbox{$(m+2)$nd}\\
    \\
    \end{array}~~.
\]
This updates $\ell_1$ in $C'$ to $\ell_1+1$, and appropriate permutations finish the induction step.

\vspace{0.2cm}\noindent\textbf{Case 2:} 
Otherwise, suppose $b_{\ell+1,i}=0$ for all $i>m+1$. 
Then, the column vector $b_{\ell+1}$ can be expressed as a linear combination
\[
b_{\ell+1} = 
\sum_{i=1}^{\ell_1}(\alpha_ia_i+\beta_ib_i) +
\sum_{i=\ell_1+1}^{\ell}\alpha_ia_i +\alpha_{\ell+1} a_{\ell+1}.
\]
Mapping both sides to $V^5$ by $f_{25}$ leads to
\[
(1-\alpha_{\ell+1})f_{25}(a_{\ell+1}) =
\sum_{i=1}^{\ell_1}(\alpha_i+\beta_i)f_{25}(a_i) +
\sum_{i=\ell_1+1}^{\ell}\alpha_if_{25}(a_i).
\]
By linear independence, we have 
\[
\alpha_{\ell+1}=1,~~~~~\alpha_i=\left\{\begin{array}{ll}
        -\beta_i, & i=1,\dots,\ell_1,\\
        0,&i=\ell_1+1,\dots,\ell.
    \end{array}\right.
\]
Therefore, the matrix $C$ has the form
\[
C=\left[\begin{array}{c|c|c|c}
        & & \alpha_1& \\
        & & -\alpha_1& \\
        C' & \vector{~0~}& \vdots& \vector{~*}\\
        & & \alpha_{\ell_1}& \\
        & & -\alpha_{\ell_1}& \\
        & & \vector{0}& \\
        \hline
        \vector{0} &1 & 1& \vector{~*}\\
        \hline
        \vector{0} & \vector{0}&\vector{0} & \vector{~*}
    \end{array}\right]
\]
and, equivalently, the maps $f_{12}$ and $f_{32}$ are now in the forms
\[
f_{12}=
\begin{array}{c}
\hspace{0.9cm}\tiny{\mbox{$(\ell+1)$st}}
\\
    \left[\begin{array}{c|c|c|c}
            D_a & \vector{~0~} & \vector{~0~} &\vector{~*}\\
            \hline
            \vector{0}&\vector{1}&\vector{0}&\vector{~*}\\
            \hline
            \vector{0}&\vector{0}&1&\vector{~*}\\
            \hline
            \vector{0}&\vector{0}&\vector{0}&\vector{~*}
        \end{array}\right]
\end{array}
,~~~
f_{32}=
\begin{array}{c}
\hspace{0.6cm}\tiny{\mbox{$(\ell+1)$st}}
\\
    \left[\begin{array}{c|c|c|c}
            D_b & \vector{~0~} & \vector{\alpha_\pm} &\vector{~*}\\
            \hline
            \vector{0}&\vector{1}&\vector{0}&\vector{~*}\\
            \hline
            \vector{0}&\vector{0}&1&\vector{~*}\\
            \hline
            \vector{0}&\vector{0}&\vector{0}&\vector{~*}
        \end{array}\right],
\end{array}
\]
where $D_a$ and $D_b$ are the $2\ell_1\times \ell_1$ matrices
\[
D_a=\left[\begin{array}{cccc}
        1&&&\\
        0&&&\\
        &1&&\\
        &0&&\\
        &&\ddots&\\
        &&&1\\
        &&&0
    \end{array}\right],~~~
D_b=\left[\begin{array}{cccc}
        0&&&\\
        1&&&\\
        &0&&\\
        &1&&\\
        &&\ddots&\\
        &&&0\\
        &&&1
    \end{array}\right].
\]
Here, we define $\vector{\alpha}_\pm$ and $\vector{\alpha}_{+}, \vector{\alpha}$ (for later use) as
\[
\vector{\alpha}_\pm=\left[ \begin{array}{c}
        \alpha_1\\
        -\alpha_1\\
        \vdots\\
        \alpha_{\ell_1}\\
        -\alpha_{\ell_1}
    \end{array}\right],~
\vector{\alpha}_{+}=\left[\begin{array}{c}
        \alpha_1\\
        0\\
        \vdots\\
        \alpha_{\ell_1}\\
        0
    \end{array}\right],~
\vector{\alpha}=\left[\begin{array}{c}
        \alpha_1\\
        \vdots\\
        \alpha_{\ell_1}\\
    \end{array}\right].
\]

Note that the nonzero entries to the left of
$\vector{\alpha}_\pm$ in $f_{32}$ occur in the even-numbered rows.
By a change of basis on $V^3$ (column operations on $f_{32}$), we are
thus able to zero out the even-numbered entries in
$\vector{\alpha}_\pm$ as follows:
\[
f_{32}=
\begin{array}{c}
\hspace{0.6cm}\tiny{\mbox{$(\ell+1)$st}}
\\
    \left[\begin{array}{c|c|c|c}
            D_b & \vector{~0~} & \vector{\alpha}_{+} &\vector{~*}\\
            \hline
            \vector{0}&\vector{1}&\vector{0}&\vector{~*}\\
            \hline
            \vector{0}&\vector{0}&1&\vector{~*}\\
            \hline
            \vector{0}&\vector{0}&\vector{0}&\vector{~*}
        \end{array}\right].
\end{array}
\]
This can be done by using the basis change matrix on $V^3$
\begin{equation}
    \label{eq:bchng36}
    \begin{array}{c}
\hspace{1.3cm}\tiny{\mbox{$(\ell+1)$st}}
\\
        R=\left[\begin{array}{c|c|c|c}
                \vector{~1~} & &\vector{\alpha} & \\
                \hline
                &\vector{~1~} &\vector{~0~} & \\
                \hline                
                & &1 & \\
                \hline                
                & & &\vector{~1~} 
            \end{array}\right],
    \end{array}
\end{equation}
and it changes $f_{36}$ into 
\[
f_{36}=\left[\begin{array}{c}
        \vector{1}\\
\hline        
        \vector{0}
    \end{array}\right]
R
=\left[\begin{array}{c}
       R\\
\hline        
        \vector{0}
    \end{array}\right].
\]
We will see later that  $f_{36}$ can be
restored to the form ${\scriptsize \left[\begin{array}{c} \vector{1} \\\hline \vector{0}
    \end{array} \right]}$ within the inductive step.

We next zero out $\vector{\alpha}_+$ in $f_{32}$ with row operations to obtain
\[
f_{32}=
\begin{array}{c}
\hspace{0.9cm}\tiny{\mbox{$(\ell+1)$st}}
\\
    \left[\begin{array}{c|c|c|c}
            D_b & \vector{~0~} & \vector{~0~} &\vector{~*}\\
            \hline
            \vector{0}&\vector{1}&\vector{0}&\vector{~*}\\
            \hline
            \vector{0}&\vector{0}&1&\vector{~*}\\
            \hline
            \vector{0}&\vector{0}&\vector{0}&\vector{~*}
        \end{array}\right]
\end{array}
\]
by using the basis change matrix
\[
\begin{array}{c}
\hspace{0.1cm}\tiny{\mbox{$(m+1)$st}}
\\
    \left[\begin{array}{c|c|c|c}
            \vector{~1~} & &\vector{-\alpha}_{+} & \\
            \hline
            &\vector{~1~} &\vector{0} & \\
            \hline
            & &1 & \\
            \hline
            & & &\vector{~1~} 
        \end{array}\right]^{-1}
\end{array}
\]
on $V^2$.
This changes $f_{12}$ into
\[
f_{12}=
\left[\begin{array}{c|c|c|c}
        \vector{~1~} & &\vector{-\alpha}_{+} & \\
        \hline
        &\vector{~1~} &\vector{0} & \\
        \hline        
        & &1 & \\
        \hline        
        & & &\vector{~1~} 
    \end{array}\right]
\left[\begin{array}{c|c|c|c}
        D_a & \vector{~0~} & \vector{~0~} &\vector{~*}\\
        \hline
        \vector{0}&\vector{1}&\vector{0}&\vector{~*}\\
        \hline
        \vector{0}&\vector{0}&1&\vector{~*}\\
        \hline
        \vector{0}&\vector{0}&\vector{0}&\vector{~*}
    \end{array}\right]=
\left[\begin{array}{c|c|c|c}
        D_a & \vector{~0~} & \vector{-\alpha}_{+} &\vector{~*}\\
        \hline
        \vector{0}&\vector{1}&\vector{0}&\vector{~*}\\
        \hline
        \vector{0}&\vector{0}&1&\vector{~*}\\
        \hline
        \vector{0}&\vector{0}&\vector{0}&\vector{~*}
    \end{array}\right].
\]
Essentially, we transferred $\vector{\alpha}_+$ from $f_{32}$ to
$f_{12}$ as $-\vector{\alpha}_+$. Then, we zero out $-\vector{\alpha}_+$ in $f_{12}$ by column
operations (change of basis on $V^1$) to obtain
\[
f_{12}=
\begin{array}{c}
\hspace{0.8cm}\tiny{\mbox{$(\ell+1)$st}}
\\
    \left[\begin{array}{c|c|c|c}
            D_a & \vector{~0~} & \vector{~0~} &\vector{~*}\\
            \hline
            \vector{0}&\vector{1}&\vector{0}&\vector{~*}\\
            \hline
            \vector{0}&\vector{0}&1&\vector{~*}\\
            \hline
            \vector{0}&\vector{0}&\vector{0}&\vector{~*}
        \end{array}\right].
\end{array}
\]
This is achieved by the same basis change matrix (\ref{eq:bchng36}), and it changes $f_{14}$ into
\[
f_{14}=\left[\begin{array}{c}
        \vector{1}\\
        \hline
        \vector{0}
    \end{array}\right]
R
=\left[\begin{array}{c}
        R\\
        \hline
        \vector{0}
    \end{array}\right].
\]
At this point, $C'$ is updated to the desired form with $\ell_2$ increasing to $\ell_2+1$. 

It remains to transform the maps $f_{14}, f_{45}, f_{65}, f_{36}$ into the forms shown in the subspace tracking (S1 to S4). From the matrix forms 
$
f_{14} = {\scriptsize \left[\begin{array}{c}
        R\\
        \vector{0}
    \end{array}\right]}
$
and
$
f_{36} = {\scriptsize \left[\begin{array}{c}
        R\\
        \vector{0}
    \end{array}\right]}
$ (size of the matrices $\vector{0}$ may be different), we can perform changes of bases on $V^4$ and $V^6$ to obtain
\[
f_{14}=\left[\begin{array}{c}
        \vector{1}\\
        \vector{0}
    \end{array}\right],~~~
f_{36}=\left[\begin{array}{c}
        \vector{1}\\
        \vector{0}
    \end{array}\right]
\]
by the basis change matrices of the form
$
{\scriptsize \left[\begin{array}{cc}
        \vector{R} & \vector{0}\\
        \vector{0} & \vector{1}
    \end{array}\right]}
$. This changes the matrix forms of $f_{45}$ and $f_{65}$ to
\[
f_{45}=\left[\begin{array}{cc}
        \vector{1} & \vector{0}\\
        \vector{0} & \vector{*}
    \end{array}\right]
\left[\begin{array}{cc}
       {R} & \vector{0}\\
        \vector{0} & \vector{1}
    \end{array}\right]
=    \left[\begin{array}{cc}
        {R} & \vector{0}\\
        \vector{0} & \vector{*}
    \end{array}\right]
    ,~~~
f_{65}=\left[\begin{array}{cc}
        \vector{1} & \vector{0}\\
        \vector{0} & \vector{*'}
    \end{array}\right]
\left[\begin{array}{cc}
       {R} & \vector{0}\\
        \vector{0} & \vector{1}
    \end{array}\right]
=\left[\begin{array}{cc}
        {R} & \vector{0}\\
        \vector{0} & \vector{*'}
    \end{array}\right]    .
\]
Finally, a change of basis on $V^5$ by 
${\scriptsize \left[\begin{array}{cc}
        {R} & \vector{0}\\
        \vector{0} & \vector{1}
    \end{array}\right]}$ 
simultaneously transforms the maps $f_{45}$ and $f_{65}$ into the desired forms
\[
f_{45}=\left[\begin{array}{cc}
        \vector{1}&\vector{0}\\
        \vector{0}&\vector{*}
    \end{array}\right],~~~
f_{65}=\left[\begin{array}{cc}
        \vector{1}&\vector{0}\\
        \vector{0}&\vector{*'}
    \end{array}\right].
\]
This finishes the induction step. 
%

Thus, we obtain the following decompositions:
\[
\begin{array}{rcl}
    f_{12} =\left[\begin{array}{cc}
            D_a & \vector{0}\\
            \vector{0} & \vector{1}\\
            \vector{0} & \vector{0}
        \end{array}\right] &:&
    V^1 = V^1_{1223456}\oplus V^1_{123456}\rightarrow
    V^2 = V^2_{1223456}\oplus V^2_{123456}\oplus W^2,\\
    f_{32} =\left[\begin{array}{cc}
            D_b & \vector{0}\\
            \vector{0} & \vector{1}\\
            \vector{0} & \vector{0}
        \end{array}\right] &:&
    V^3 = V^3_{1223456}\oplus V^3_{123456}\rightarrow
    V^2 = V^2_{1223456}\oplus V^2_{123456}\oplus W^2,\\
    f_{14} =\left[\begin{array}{cc}
            \vector{1} & \vector{0}\\
            \vector{0} & \vector{1}\\
            \vector{0} & \vector{0}
        \end{array}\right] &:& 
    V^1 = V^1_{1223456}\oplus V^1_{123456}\rightarrow
    V^4 = V^4_{1223456}\oplus V^4_{123456}\oplus X^4,\\
    f_{36} =\left[\begin{array}{cc}
            \vector{1} & \vector{0}\\
            \vector{0} & \vector{1}\\
            \vector{0} & \vector{0}
        \end{array}\right] &:& 
    V^3 = V^3_{1223456}\oplus V^3_{123456}\rightarrow
    V^6 = V^6_{1223456}\oplus V^6_{123456}\oplus X^6,\\
    f_{45} =\left[\begin{array}{ccc}
            \vector{1} & \vector{0} & \vector{0}\\
            \vector{0} & \vector{1} & \vector{0}\\
            \vector{0} & \vector{0} & \vector{*}
        \end{array}\right]&:& 
    V^4 = V^4_{1223456}\oplus V^4_{123456}\oplus
    X^4\rightarrow
    V^5 = V^5_{1223456}\oplus V^5_{123456}\oplus X^5,\\
    f_{65} = \left[\begin{array}{ccc}
            \vector{1} & \vector{0} & \vector{0}\\
            \vector{0} & \vector{1} & \vector{0}\\
            \vector{0} & \vector{0} & \vector{*}
        \end{array}\right] &:& 
    V^6 = V^6_{1223456}\oplus V^6_{123456}\oplus
    X^5\rightarrow
    V^6 = V^5_{1223456}\oplus V^5_{123456}\oplus X^5,
\end{array}
\]
where the size of $D_a$ and $D_b$ is $2s_1\times s_1$.

By commutativity and the series of executed changes of bases, $f_{25}$ is now in the form
\begin{eqnarray*}
    &&f_{25}: V^2=V^2_{1223456}\oplus V^2_{123456}\oplus W^2\rightarrow V^5=V^5_{1223456}\oplus V^5_{123456}\oplus X^5\\
    &&f_{25}=\left[\begin{array}{ccccc|ccc|ccc}
            1&1&&&&&&&\lambda_{11}&\cdots&\lambda_{1t}\\
            &&\ddots&&&&\vector{0}&&\vdots&&\vdots\\
            &&&1&1&&&&\lambda_{s_11}&\cdots&\lambda_{s_1t}\\
            \hline
            &&&&&1&&&&&\\
            &&\vector{0}&&&&\ddots&&&\Xi&\\
            &&&&&&&1&&&\\
            \hline
            &&\vector{0}&&&&\vector{0}&&&\vector{*}&
        \end{array}\right].
\end{eqnarray*}
We replace the subspace $W^2$ with $X^2$ in such a way that $f_{25}$ becomes
\begin{eqnarray*}
    &&f_{25}: V^2=V^2_{1223456}\oplus V^2_{123456}\oplus X^2\rightarrow V^5=V^5_{1223456}\oplus V^5_{123456}\oplus X^5\\
    &&f_{25}=\left[\begin{array}{ccccc|ccc|c}
            1&1&&&&&&&\\
            &&\ddots&&&&\vector{0}&&\vector{0}\\
            &&&1&1&&&&\\
            \hline
            &&&&&1&&&\\
            &&\vector{0}&&&&\ddots&&\vector{0}\\
            &&&&&&&1&\\
            \hline
            &&\vector{0}&&&&\vector{0}&&\vector{*}\\
        \end{array}\right]
\end{eqnarray*}
by the basis change matrix on $V^2$:
\[
\left[\begin{array}{ccccc|ccc|ccc}
        1&&&&&&&&-\lambda_{11}&\cdots&-\lambda_{1t}\\
        &1&&&&&&&0&\cdots&0\\
        &&\ddots&&&&&&\vdots&&\vdots\\
        &&&1&&&&&-\lambda_{s_11}&\cdots&-\lambda_{s_1t}\\
        &&&&1&&&&0&\cdots&0\\
        \hline
        &&&&&1&&&&&\\
        &&&&&&\ddots&&&-\Xi&\\
        &&&&&&&1&&&\\
        \hline
        &&&&&&&&1&&\\
        &&&&&&&&&\ddots&\\
        &&&&&&&&&&1
    \end{array}\right].
\]

Note that this basis change on $V^2$ does not affect $f_{12}$ and $f_{32}$ at all. 
Thus, we obtain the summands
\[
\dimInd{1,1,1,1,1,1}^{k_{123456}} =
\spaceInd{V^1_{123456},V^2_{123456},V^3_{123456},
    V^4_{123456},V^5_{123456},V^6_{123456}}
\]
and 
\[
\dimInd{1,2,1,1,1,1}^{k_{1223456}} =
\spaceInd{V^1_{1223456},V^2_{1223456},V^3_{1223456},
    V^4_{1223456},V^5_{1223456},V^6_{1223456}},
\]
where $k_{123456} = \dim V^i_{123456}$ for $i=1,\dots,6$ and $k_{1223456} = \dim V^i_{1223456}$ for $i=1,3,\dots,6$.


\begin{flushleft}{\bf Step 5:}
\end{flushleft}
At this stage, the input $V$ satisfies $V^1=V^3=0$. It means that $V$ is a representation on the quiver
\[
    \xymatrix{
        \circ \ar[r]&\circ&\ar[l]\circ\\
        &\circ \ar[u] &
    },
\]    
which is listed in Gabriel's theorem as $\mathbb{D}_4$.
We divide this step into 2 parts. 

\begin{flushleft}{\bf Step 5.1:}  $\dimInd{,1,,,,}$, $\dimInd{,1,,1,1,1}$, $\dimInd{,1,,,1,1}$, $\dimInd{,1,,1,1}$, $\dimInd{,,,1,1,1}$
\end{flushleft}
We easily obtain $\dimInd{,1,,,,}$ from $\kernel f_{25}$, since $V^1=V^3=0$. The other indecomposables can be obtained in a similar way as the previous steps.

\begin{flushleft}{\bf Step  5.2:}
    $\dimInd{,1,,1,2,1}$, 
    $\dimInd{,,,1,1}$, 
    $\dimInd{,1,,,1}$, 
    $\dimInd{,,,,1,1}$, 
    $\dimInd{,,,,1}$
\end{flushleft}

Somewhat different from the previous steps is the appearance of $\dimInd{,1,,1,2,1}$.
Let us define the matrix
\[
C=\left[\begin{array}{cc} f_{45} &  f_{65} \end{array} \right].
\]
We claim $\rank C=\rank f_{45} + \rank f_{65}$.
$\rank C\leq \rank f_{45} + \rank f_{65}$ is obvious. 
Suppose $\rank C<\rank f_{45} + \rank f_{65}$. Then there exist
vectors $v_4\in V^4$ and $v_6\in V^6$ such that
$f_{45}(v_4)=f_{65}(v_6)$. This allows us to extract a summand isomorphic to 
$\dimInd{,,,1,1,1}$ or  $\dimInd{,1,,1,1,1}$, contradicting the fact that these types of summands have already been removed.
It follows from the claim that we can consider the matrix $C$ as an injective map 
$C: V^4 \oplus V^6\rightarrow V^5$.

\begin{enumerate}[S1.]
\item From the above claim, perform a change of basis on $V^5$ to obtain
    \[
    C={\scriptsize \left[\begin{array}{cc}
            \vector{1} & \vector{0}\\
            \vector{0} & \vector{1}\\
            \vector{0} & \vector{0}\\
        \end{array}\right]}: V^4\oplus V^6\rightarrow V^5 =
    U^5_{45}\oplus U^5_{65} \oplus U^5.
    \]
This tracks the subspaces $V^4$ and $V^6$ into $V^5$.
\item Perform a change of basis on $V^2$ to obtain a column echelon form
    \[
    f_{25}={\scriptsize \left[\begin{array}{cc}
            \vector{*} & \vector{*}\\
            \vector{*} & \vector{*}\\
            \vector{0} & \vector{*}
        \end{array}\right]}: 
    V^2 = V^2_{24556}\oplus V^2_{25}\rightarrow V^5 = U^5_{45}\oplus U^5_{65}\oplus U^5.
    \]
The remaining direct summands in $V$ that have nonzero vector spaces at vertex 2 consist of  
$\dimInd{,1,,1,2,1}$ and $\dimInd{,1,,,,}$. We denote those vector spaces by $V^2_{24556}$ and $V^2_{25}$, respectively.
  \end{enumerate}
\begin{enumerate}[B1.]
\item  From the above remark, we can perform changes of bases on $U^5_{45}$ and $U^5_{65}$, respectively, to express $f_{25}$ in the form
     \[
    f_{25}={\scriptsize \left[\begin{array}{cc}
            \vector{1} & \vector{*}\\
            \vector{0} & \vector{*}\\
            \hline
            \vector{1} & \vector{*}\\
            \vector{0} & \vector{*}\\
            \hline
            \vector{0} & \vector{*}
        \end{array}\right]}: 
    V^2=V^2_{24556}\oplus V^2_{25}\rightarrow V^5 = (U^5_{245}\oplus
    W^5_{45} )\oplus (U^5_{265}\oplus W^5_{65})\oplus U^5.
    \]
   
\item As before, perform a change of basis on $V^5$ to obtain
    \[
    f_{25}={\scriptsize \left[\begin{array}{cc}
            \vector{1} & \vector{0}\\
            \vector{0} & \vector{0}\\
            \hline
            \vector{1} & \vector{0}\\
            \vector{0} & \vector{0}\\
            \hline
            \vector{0} & \vector{0}\\
            \vector{0} & \vector{1}
        \end{array}\right]}: 
    V^2=V^2_{24556}\oplus V^2_{25}\rightarrow V^5 = (U^5_{245}\oplus
    W^5_{45}) \oplus (U^5_{265}\oplus W^5_{65}) \oplus (V^5_5\oplus V^5_{25})
    \]
without affecting $C$.
    From this expression, we can extract the direct summands
    \[
    \dimInd{0,1,0,0,1,0}^{k_{25}} = \spaceInd{0,V^2_{25},0,0,V^5_{25},0},~~~
    \dimInd{0,0,0,0,1,0}^{k_{5}} = \spaceInd{0,0,0,0,V^5_{5},0}.
    \]    
    Hence, by removing them and switching the order of $W^5_{45}$ and $U^5_{265}$, we have
    \[
    f_{25}={\scriptsize \left[\begin{array}{c}
            \vector{1} \\
            \vector{1} \\
            \vector{0} \\
            \vector{0} 
        \end{array}\right]}: 
    V^2=V^2_{24556}\rightarrow V^5 = U^5_{245}\oplus U^5_{265}\oplus W^5_{45}\oplus W^5_{65}.
    \]

\item Perform changes of bases on $V^4$ and $V^5$ to obtain
    \[
    f_{45}={\scriptsize \left[\begin{array}{cc}
            \vector{1} & \vector{0}\\
            \vector{0} & \vector{0}\\
            \vector{0} & \vector{1}\\
            \vector{0} & \vector{0}
        \end{array}\right]}: 
    V^4=V^4_{24556}\oplus V^4_{45}\rightarrow V^5=U^5_{245}\oplus U^5_{265}\oplus V^5_{45}\oplus W^5_{65}
    \]
        Note that this basis change on $V^5$ does not change $f_{25}$ at all.

\item
    In a similar manner, we can derive the summands $V^6_{24556}$ and
    $V^6_{65}$:
    \[
    f_{65} = {\scriptsize \left[\begin{array}{cc}
            \vector{0} & \vector{0} \\
            \vector{1} & \vector{0}\\ 
            \vector{0} & \vector{0} \\ 
            \vector{0} & \vector{1}
        \end{array} \right]}:
    V^6=V^6_{24556} \oplus V^6_{65} \rightarrow V^5 = U^5_{245}\oplus U^5_{265} \oplus
    V^5_{45}\oplus V^5_{65}.
    \]

    Thus, by setting $V^5_{24556}=U^5_{245}\oplus U^5_{265}$, we obtain the desired summands
    \[
    \dimInd{0,1,0,1,2,1}^{k_{24556}} = \spaceInd{0,V^2_{24556},0,V^4_{24556}, V^5_{24556},
        V^6_{24556}},
    \]
    \[
    \dimInd{0,0,0,1,1,0}^{k_{45}} = \spaceInd{0,0,0,V^4_{45},V^5_{45},0}~~~{\rm and}~~~
    \dimInd{0,0,0,0,1,1}^{k_{65}} = \spaceInd{0,0,0,0,V^5_{65},V^6_{65}}.
    \]
\end{enumerate} 
This completes the algorithm.
All operations involved in the algorithm consist only of row and column operations on the matrices.  
Clearly, the algorithm has $O(n^3)$ worst-case complexity, where $n$ is the maximum of $\dim V^i$ over all $i$.

\section{Numerical Examples}\label{sec:numerics}
We provide two numerical examples to demonstrate the feasibility of our algorithm and to illustrate the interpretation of persistence diagrams, as discussed in Section \ref{sec:arq_clfb}. As before, let us focus on persistence modules on $\CL(fb)$ of the form:
\begin{equation}\label{eq:cl_numerics}
\spaceInd{H_\ell(X_r), H_\ell(X_r \cup Y_r), H_\ell(Y_r),
    H_\ell(X_s), H_\ell(X_s \cup Y_s), H_\ell(Y_s)}.
\end{equation}
In these examples, we construct $X_r, X_s, Y_r$, and $Y_s$ by some geometric models on finite sets $P_X$ and $P_Y$ of points in $\R^3$. The linear maps in (\ref{eq:cl_numerics}) are induced by simplicial maps on $X_\alpha, Y_\alpha$, and $X_\alpha\cup Y_\alpha$.

\vspace{0.3cm}\noindent
\textbf{Two Cubes.} Let $P_X=\{x_i\in \R^3\mid i=1,\dots,8\}$ be the set of the eight vertices of a cube of side length four, and let $P_Y=\{y_i=x_i+\epsilon_i\in\R^3\mid i=1,\dots,8\}$ be the set obtained by adding random noise $\epsilon_i$ with $||\epsilon_i||_\infty\leq 0.1$ to the vertices in $P_X$. Let $X_\alpha$ and $Y_\alpha$ be Vietoris-Rips  complexes \cite{eh} constructed from $P_X$ and $P_Y$, respectively, with parameter $\alpha$. Then, the nontrivial homology group $H_1(X_\alpha)$ is generated by 5 cycles which persist in the interval $\alpha\in[2,2\sqrt{2})$. 

Let us choose two parameters $r<s$ inside this interval and compute the persistence diagram of (\ref{eq:cl_numerics}) with $\ell=1$, where the simplicial maps are determined by the identification of the vertices $x_i$ and $y_i$, for $i=1,\dots,8$.
In the left and right part of Figure~\ref{fig:two_cubes}, the parameters are set to be $r=2.1,~s=2.5$ and $r=2.052,~s=2.5$, respectively. The left diagram shows that we detect the indecomposable $\dimVec{1,1,1,1,1,1}$ with multiplicity five. It means that five nontrivial cycles in $H_1(X_\alpha)$ are shared with $H_1(Y_\alpha)$ in the interval $\alpha\in [2.1,2.5]$. 
On the other hand, the right diagram shows that three of these five generators split into the indecomposable $\dimVec{1,1,0,1,1,1}$. It means that these three shared nontrivial cycles are less robust compared to the remaining two.
\begin{figure}[h!]
        \centering
        \includegraphics[width=0.45\textwidth]{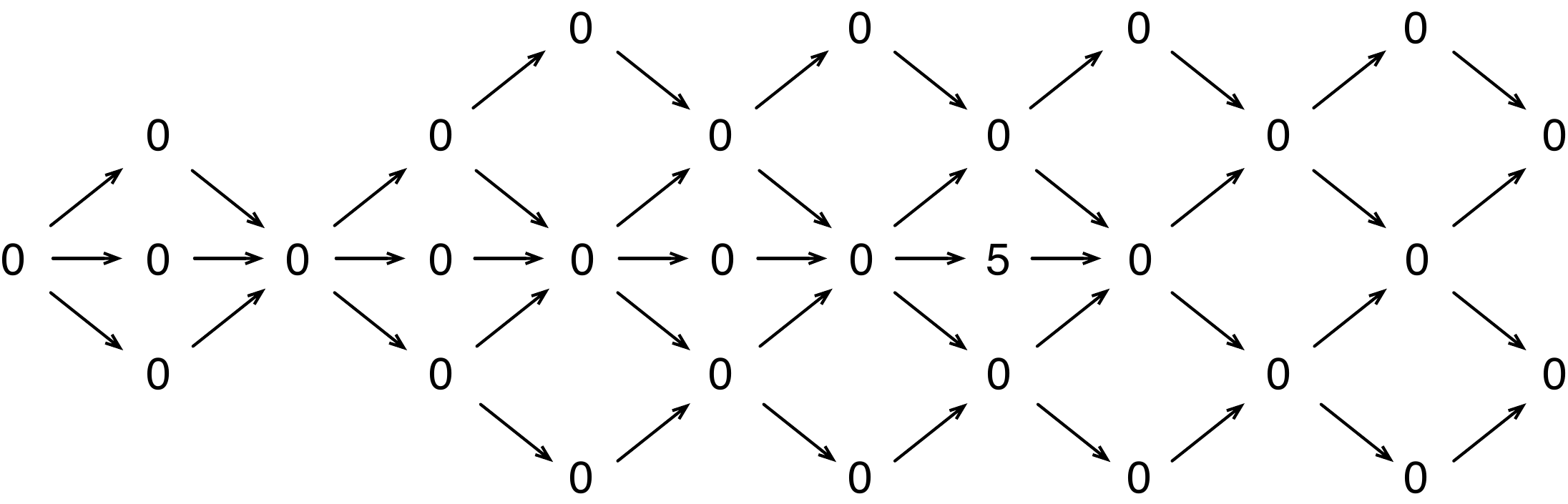}~~~~~
        \includegraphics[width=0.45\textwidth]{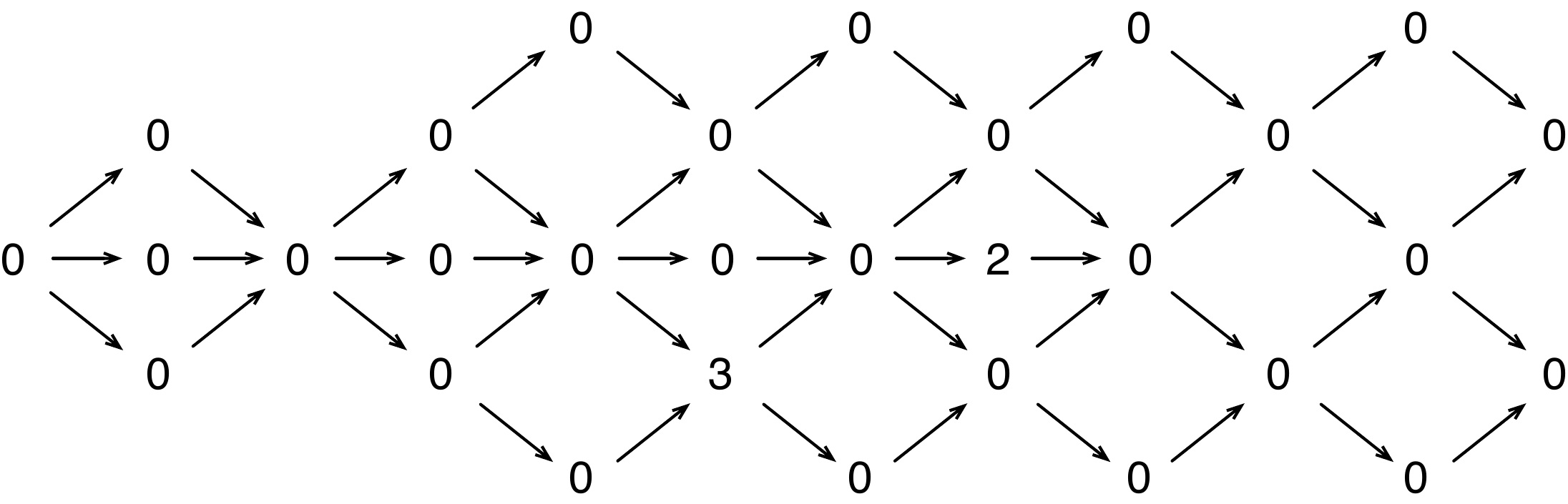}
        \caption{Persistence diagrams of the two cubes with parameters $r=2.1,~s=2.5$ on the left and $r=2.052,~s=2.5$ on the right.}
        \label{fig:two_cubes} 	
\end{figure}

\vspace{0.3cm}\noindent
\textbf{Silica Glasses.} We apply our newly developed TDA tool to the original motivation explained in Section \ref{sec:introduction}. Let $P_X$ be a set of 8100 atoms in an \sio glass obtained from a molecular dynamics simulation. The left part of Figure \ref{fig:pd_glass} shows its persistence diagram. We denote the set of generators in the blue region by $G_X$. Our interest from the viewpoint of materials science is to investigate how many generators in $G_X$ persist under physical pressurization. 

Here, we remark that the vineyard of persistence diagrams \cite{vineyard} is not sufficient for this purpose, since the deformation induced by the pressurization may not be sufficiently small. Furthermore, the discussion in Section \ref{sec:arq_clfb} provides us with a tool to explicitly observe the transition of the generators $G_X$ under pressurization. 
Now, by setting $P_Y$ to be the atomic locations after physical pressurization via simulation, let us compute the persistence diagram of (\ref{eq:cl_numerics}) and study the transition of the generators in $G_X$. 

Figure \ref{fig:press_0_1} shows the persistence diagram of (\ref{eq:cl_numerics}) with $\ell=1$, $r=0.0$ and $s=0.6$, and where $X_\alpha$ and $Y_\alpha$ are the weighted alpha complexes \cite{alpha} computed using CGAL \cite{cgal}. Recall that the generators in $G_X$ are specified by the intervals $\I[r,s]^{m_{rs}}$ in the 2-step persistence 
\[
	H_1(X_r)\rightarrow H_1(X_s)\simeq \I[r,r]^{m_r}\oplus \I[s,s]^{m_s}\oplus \I[r,s]^{m_{rs}}.
\]	
As explained in Section \ref{sec:arq_clfb}, the transition of the generators $\I[r,s]^{m_{rs}}$ can be described by the colored region in the figure, and the direct summand consisting of the indecomposable $\dimInd{1,1,1,1,1,1}$ expresses the generators of our interest. From this computation, we can conclude that $99.18 \%~(\approx 2304/2323)$ of the generators in $G_X$  persist under physical pressurization. 
\begin{figure}[h]
    \begin{center}\includegraphics{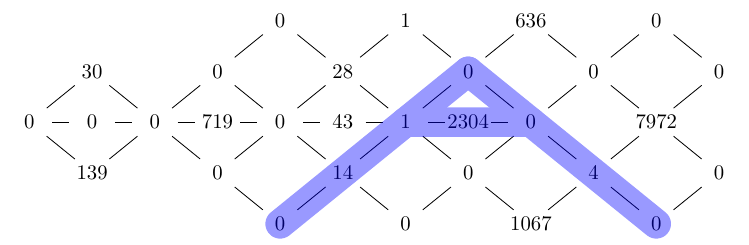}
    \caption{Persistence diagram of the glass example.}
    \label{fig:press_0_1}    
\end{center}
\end{figure}



%
%

%
%
%
%
%
\section{Concluding Remarks}\label{sec:concluding}
In this paper, we have studied persistence modules on commutative ladders. We have proposed a new algebraic framework that treats persistence modules as representations on associative algebras and applies the Auslander-Reiten theory.   This approach allows us to extend the theory of topological persistence to quivers with relations such as the commutative ladders, even if these quivers are not included in the list provided by Gabriel's theorem. We have also generalized the concept of persistence diagrams as functions on the vertex sets of Auslander-Reiten quivers. Our algorithm for computing generalized persistence diagrams is constructed by echelon form reductions following the structure of Auslander-Reiten quivers, and this is applied to a practical problem of TDA on glasses. For further speed up of computations, we plan to apply the technique of morse reductions \cite{nanda} to this algorithm \cite{eh_morse}.

%
%
%
%
\section*{Acknowledgments}
The authors would like to thank Hideto Asashiba, Hiroyuki Ochiai, and Dai Tamaki for valuable discussions and comments. This work is partially supported by JSPS Grant-in-Aid (24684007, 26610042).
\appendix
\section{The Auslander-Reiten Quivers of $\CL(\tau_n)$ with $n\leq 4$}\label{sec:appendix}
\begin{figure}[h!]
\begin{center}
\includegraphics[width=7.5cm]{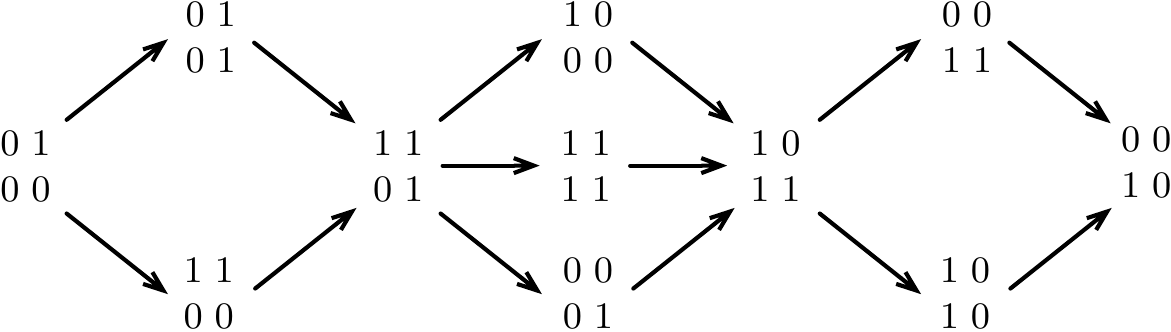}
\caption{The Auslander-Reiten quiver of $\CL(f)$.}
\label{fig:AR_f}
\end{center}
\end{figure}
\begin{figure}[h!]
\begin{center}
\includegraphics[width=15cm]{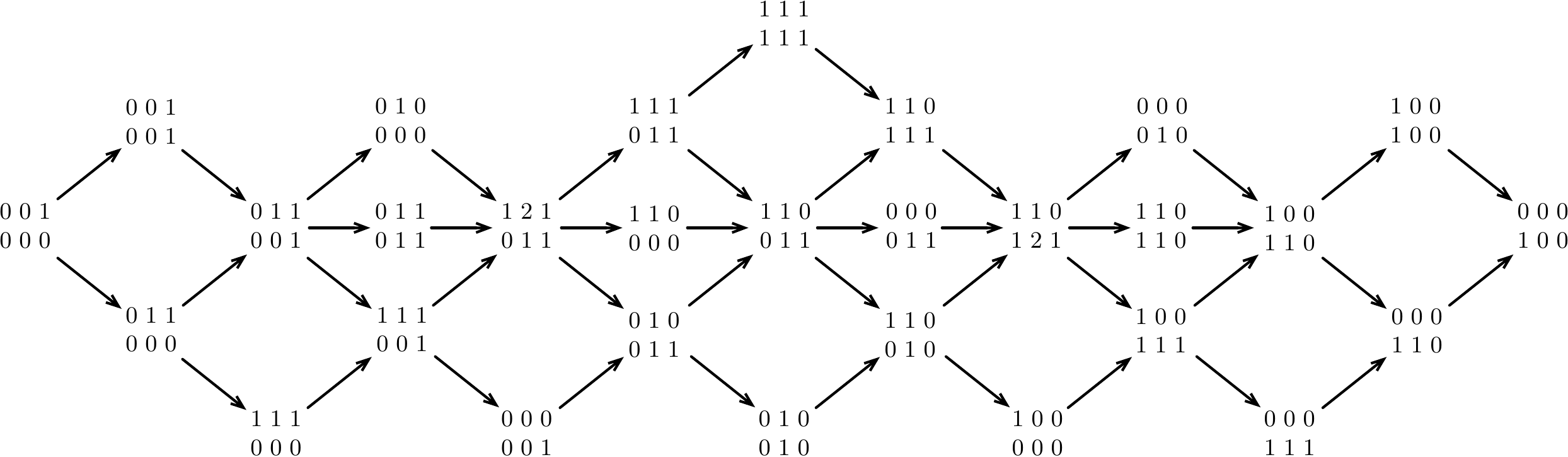}
\caption{The Auslander-Reiten quiver of $\CL(ff)$.}
\label{fig:AR_ff}
\end{center}
\end{figure}
\begin{figure}[h!]
\begin{center}
\includegraphics[width=14cm]{./AR_fb}
\caption{The Auslander-Reiten quiver of $\CL(fb)$.}
\label{fig:AR_fb}
\end{center}
\end{figure}
\begin{figure}[h!]
\begin{center}
\includegraphics[width=14cm]{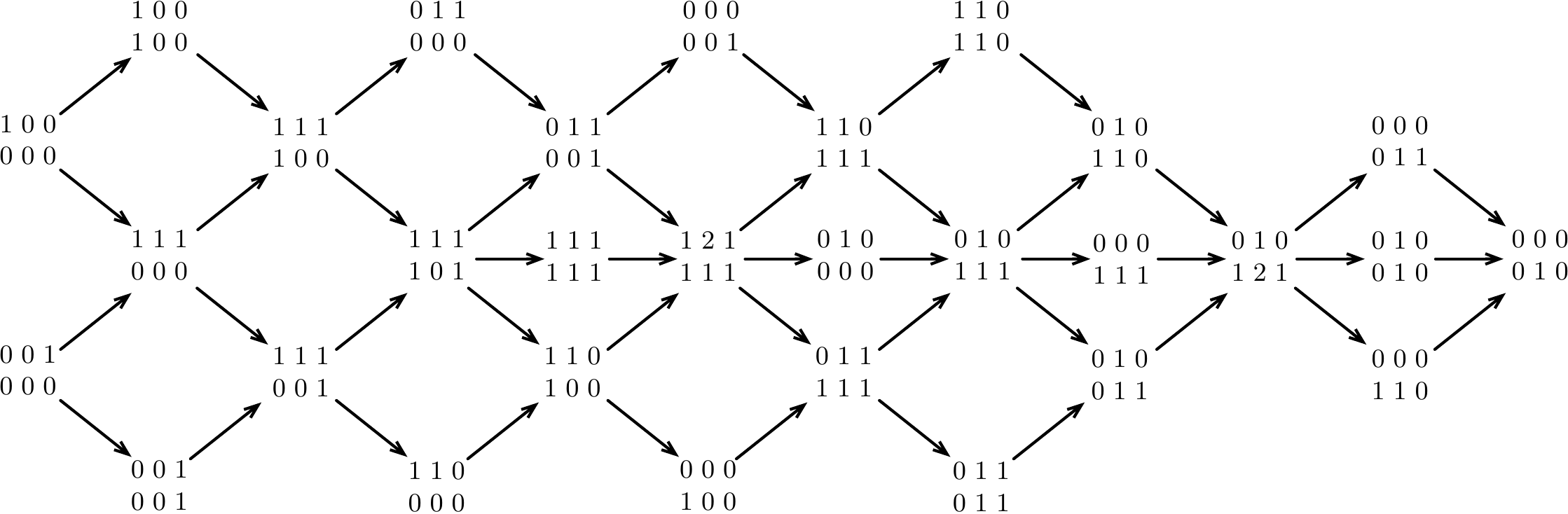}
\caption{The Auslander-Reiten quiver of $\CL(bf)$.}
\label{fig:AR_bf}
\end{center}
\end{figure}
\begin{figure}[h!]
\hspace{-1.5cm}
\includegraphics[width=19cm]{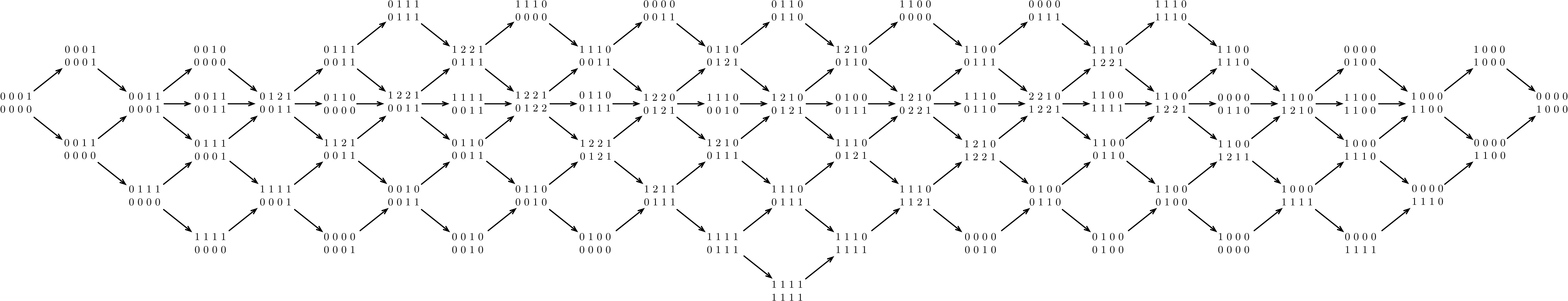}
\caption{The Auslander-Reiten quiver of $\CL(fff)$.}
\label{fig:AR_fff}
\end{figure}
\begin{figure}[h!]
\hspace{-1.5cm}
\includegraphics[width=19cm]{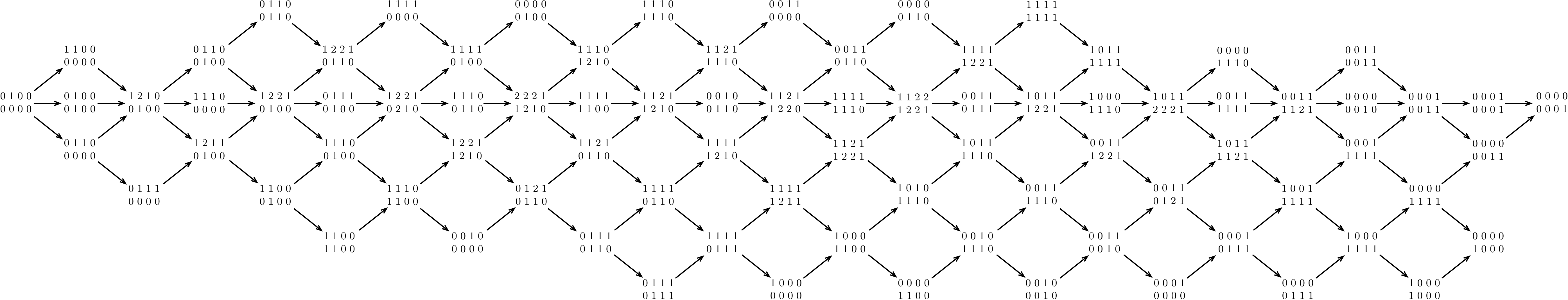}
\caption{The Auslander-Reiten quiver of $\CL(fbb)$.}
\label{fig:AR_fbb}
\end{figure}
\begin{figure}[h!]
\hspace{-1.5cm}
\includegraphics[width=19cm]{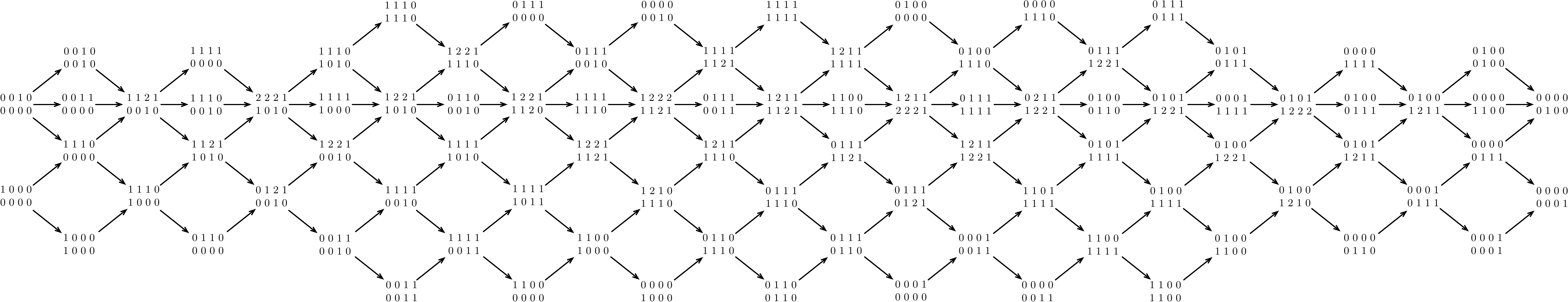}
\caption{The Auslander-Reiten quiver of $\CL(bfb)$.}
\label{fig:AR_bfb}
\end{figure}
\begin{figure}[h!]
\hspace{-1.5cm}
\includegraphics[width=19cm]{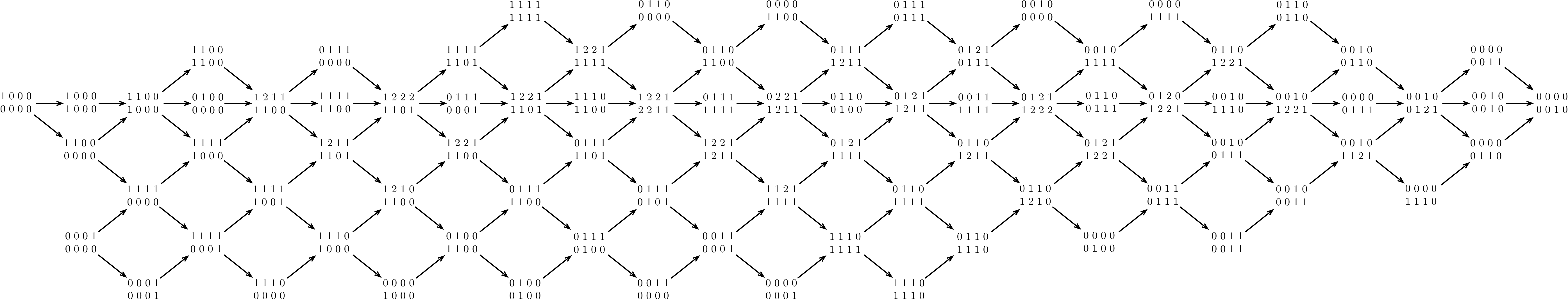}
\caption{The Auslander-Reiten quiver of $\CL(bbf)$.}
\label{fig:AR_bbf}
\end{figure}
%

\end{document}